\title{Holomorphic geometric structures  on K{\"a}hler--Einstein manifolds}
\author{Benjamin McKay}
\address{University College Cork \\ National University of Ireland \\ Cork, Ireland}
\date{\today} 
\newtheorem{theorem}{Theorem}
\newtheorem{corollary}{Corollary}
\newtheorem{lemma}{Lemma}
\newtheorem{proposition}{Proposition}
\theoremstyle{remark}
\newtheorem{example}{Example}
\newcounter{remarkCounter}
\newcommand{\hook}{\ensuremath{\mathbin{ \hbox{\vrule height1.4pt
        width4pt depth-1pt \vrule height4pt width0.4pt depth-1pt}}}}
\renewcommand{\C}[1]{\ensuremath{\mathbb{C}^{#1}}}
\newcommand{\Z}[1]{\ensuremath{\mathbb{Z}^{#1}}}
\newcommand{\pr}[1]{\ensuremath{\left(#1\right)}}
\newcommand{\Sch}{\text{\upshape{\foreignlanguage{russian}{Ш}}}}
\newcommand{\pd}[3][1]
{
\frac{%
\partial%
\ifnum\pdf@strcmp{#1}{1}=0\else^#1\fi%
#2}%
{%
\partial
\ifnum\pdf@strcmp{#1}{1}=0\else^#1\fi%
#3
}%
}
\newcommand{\GL}[1]{\ensuremath{\operatorname{GL}\!\pr{#1}}}
\newcommand{\LieGL}[1]{\ensuremath{\mathfrak{gl}\!\pr{#1}}}
\newcommand{\LieSL}[1]{\ensuremath{\mathfrak{sl}\!\pr{#1}}}
\newcommand{\PSL}[1]{\ensuremath{\operatorname{PSL}\!\pr{#1}}}
\newcommand{\Un}[1]{\operatorname{U}\!\pr{#1}}
\newcommand{\Sp}[1]{\operatorname{Sp}\!\pr{#1}}
\newcommand{\PU}[1]{\operatorname{\mathbb{P}U}\!\pr{#1}}
\newcommand{\SU}[1]{\operatorname{SU}\!\pr{#1}}
\newcommand{\PO}[1]{\operatorname{\mathbb{P}O}\!\pr{#1}}
\newcommand{\SO}[1]{\operatorname{SO}\!\pr{#1}}
\renewcommand{\Proj}[1]{\mathbb{P}^{#1}}
\DeclareMathOperator{\Ad}{Ad}
\DeclareMathOperator{\ad}{ad}
\newcommand{\Lm}[2]{\ensuremath{\Lambda^{#1}\pr{#2}}}
\newcommand{\nForms}[2]{\ensuremath{\Omega^{#1}\pr{#2}}}
\newcommand{\Cohom}[2]{\ensuremath{H^{#1}\!\pr{{#2}}}}
\newcommand{\Gr}[2]{\ensuremath{\operatorname{Gr}\pr{{#1},{#2}}}}
\newcommand{\LieG}{\ensuremath{\mathfrak{g}}}
\newcommand{\LieH}{\ensuremath{\mathfrak{h}}}
\newcommand{\LieK}{\ensuremath{\mathfrak{h}}}
\newcommand{\LieP}{\ensuremath{\mathfrak{p}}}
\newcommand{\LieB}{\ensuremath{\mathfrak{b}}}
\newcommand{\Aut}[1]{\ensuremath{\operatorname{Aut} #1}}
\newcommand{\Sym}[2]{\ensuremath{\operatorname{Sym}^{#1}\pr{#2}}}
\newcommand{\torop}{\ensuremath{D}}
\newcommand{\tortr}{\ensuremath{\operatorname{tr}}}
\newcommand{\bari}{\bar{\imath}}
\newcommand{\barj}{\bar{\jmath}}
\newcommand{\frm}[1]{\ensuremath{F#1}}
\newcommand{\Chern}[2][]
{
\ifnum\pdf@strcmp{#1}{}=0{c_{#2}}\else{c_{#1}\!\pr{#2}}\fi%
}
\begin{document}
\begin{abstract}
We prove that the compact K\"ahler manifolds with \(\Chern{1} \ge 0\) that admit holomorphic parabolic geometries are the flat bundles of rational homogeneous varieties over complex tori.
We also prove that the compact K\"ahler manifolds with \(\Chern{1} < 0\) that admit holomorphic cominiscule geometries are the locally Hermitian symmetric varieties.
\end{abstract}
\maketitle \tableofcontents

\section{Introduction}
We will classify all holomorphic parabolic geometries on compact K\"ahler manifolds with \(\Chern{1} \ge 0\); these turn out to be constructed from flat bundles on complex tori.
This motivates the problem of classification for \(\Chern{1}<0\); we will only solve part of this problem. 
Every locally Hermitian symmetric variety has a holomorphic cominiscule geometry.
We prove that a compact complex manifold \(M\) with \(\Chern{1}<0\) admits a holomorphic cominiscule geometry just when \(M\) is a locally Hermitian symmetric variety, and we prove that the moduli space of cominiscule geometries on \(M\) is a finite dimensional complex vector space.
Moreover, we find that the normal cominiscule geometries on locally Hermitian symmetric varieties are precisely the obvious flat ones.

There is a subtlety in the study of holomorphic Cartan geometries because we need to be careful about holomorphic moduli of principal bundles, an issue which does not arise in the real smooth category where these geometric structures have been thoroughly studied.
Principal bundles of holomorphic Cartan geometries with a fixed model and defined on a fixed complex manifold can vary in the moduli space of holomorphic principal bundles \cite{McKay2011}.
We prove that cominiscule geometries with fixed underlying first order structure all have isomorphic principal bundles.
We prove that any holomorphic parabolic geometry with reducible model on any smooth projective variety gives that variety a holomorphic local product structure.

The classification of normal holomorphic cominiscule geometries on K\"ahler--Einstein manifolds is known for geometries with certain models \cite{Kobayashi/Ochiai:1980,Kobayashi/Ochiai:1981,Kobayashi/Ochiai:1981b,Kobayashi/Ochiai:1982}.
Similarly, the rigidity of \emph{flat} holomorphic cominiscule geometries with irreducible models on locally Hermitian symmetric varieties is known \cite{Klingler:2001}.
We largely follow the methods of those papers.
After this paper was released on the arXiv, Antonio Di Scala pointed out that it is similar to \cite{Catanese/DiScala:2013}, which had appeared earlier on the arXiv.

\section{Definitions}

\subsection{Cartan geometries}
Sharpe \cite{Sharpe:2002} gives an introduction to Cartan geometries.
If \(E \to M\) is a principal right \(G\)-bundle, we will write the right \(G\)-action as \(r_g e = eg\).
Suppose that \(M\) is a manifold, \(V_M \subset TM\) a vector bundle and \(\pi \colon E \to M\) a principal bundle.
Let 
\[
V_E = \Set{(e,v)|e \in E, v \in T_e E, \pi'(e)v \in V_M}.
\] 

Let \(H \subset G\) be a closed subgroup of a Lie group, with Lie algebras \(\LieH \subset \LieG\) and let \(X=G/H\). 
A \((G,X)\)-geometry, or \(G/H\)-geometry, or \emph{Cartan geometry} modelled on \((G,X)\), on a vector subbundle \(V_M \subset TM\) on a manifold \(M\) is a choice of principal right \(H\)-bundle \(E \to M\), and section \(\omega\) of \(V_E^* \otimes \LieG\), called the \emph{Cartan connection}, which satisifies all of the following conditions:
\begin{enumerate}
\item
\(
r_h^* \omega = \Ad_h^{-1} \omega
\) for all \(h \in H\).
\item
\(\omega_e \colon V_e E \to \LieG\) is a linear isomorphism at each point 
\(e \in E\).
\item
For each \(A \in \LieG\), define a vector field \(\vec{A}\) on \(E\) by
the equation \(\vec{A} \hook \omega = A\);
the vector fields \(\vec{A}\) for \(A \in \LieH\) generate the right \(H\)-action.
\end{enumerate}

A \emph{Cartan geometry on a manifold} \(M\) means a Cartan geometry on \(V_M=TM\).
Similarly a Cartan geometry on a nonsingular foliation \(F\) means a Cartan geometry on the tangent bundle of \(F\).
For example, the bundle \(G \to X=G/H\) is the total space of a Cartan geometry on \(X\), with Cartan connection \(\omega=g^{-1} \, dg\) the left invariant Maurer--Cartan 1-form on \(G\); this geometry is called the \emph{model Cartan geometry}.

If \(E_0 \to M_0\) and \(E_1 \to M_1\) are Cartan geometries with models \(\pr{G_0,X_0}\) and \(\pr{G_1,X_1}\) and Cartan connections \(\omega_0\) and \(\omega_1\), their \emph{product geometry} is the Cartan geometry with bundle \(E_0 \times E_1 \to M_0 \times M_1\) and Cartan connection \(\omega_0+\omega_1\).

Suppose that \(\pr{G,X}\) is a homogeneous space and let \(H\) be the stabilizer of some point of \(X\).
Suppose that \(X=\prod_i X_i\) splits \(G\)-equivariantly and
let \(K_i \subset G\) be the subgroup of \(G\) acting trivially on \(X_i\).
Let \(G_i=G/K_i\) be the quotient of \(G\) acting effectively on \(X_i\) and \(H_i \subset G_i\) the stabilizer of the associated point of \(X_i\).
Pick a \(\pr{G,X}\)-geometry \(\pi \colon E \to M\) on a subbundle \(V=V_M\) on some manifold \(M\).
For each point \(e \in E\) over any point \(m \in M\),  the Cartan connection \(\omega\) induces a map \(\omega + \LieH \colon T_m M \to \LieG/\LieH\).
The splitting \(X=\prod_i X_i\) defines a splitting \(V=\bigoplus_i V_i\) by the requirement that \(\omega+ \LieH\) take \(V_i\) to \(\LieG_i/\LieH_i\).
Let \(E_i = E/K_i\) and \(\omega_i=\omega+\LieK_i\): then \(E_i \to M\) is a \(\pr{G_i,X_i}\)-geometry on \(V_i\).

On the other hand, suppose that various Lie groups \(G_i\) each act smoothly, transitively and effectively on various homogeneous spaces \(X_i\) and we let \(X=\prod X_i\) and \(G=\prod_i G_i\). 
Suppose that \(M\) bears vector bundles \(V_i \subset TM\) and \(V_i \cap V_j = \left\{0\right\}\) when \(i \ne j\) and each \(V_i\) has a \(\pr{G_i,X_i}\)-geometry \(\pi_i \colon E_i \to M\). 
Let \(V=\bigoplus V_i \subset TM\) and let \(E\) be the product bundle of the \(E_i \to M\).
Add up the Cartan connections to produce a \(\pr{G,X}\)-geometry on \(V,\) the \emph{product geometry}.

\subsection{Curvature}

If \(E \to M\) is a Cartan geometry over a vector bundle \(V_M \subset TM\), let \(I=V_M^{\perp} \subset T^*M\) and form the sheaf \(I'\) of 2-forms which are locally divisible either by local sections of \(I\) or by exterior derivatives of local sections of \(I\).
Locally extend \(\omega\) to an \(H\)-equivariant 1-form valued in \(\LieG\);
the \emph{curvature} of \(E \to M\) is \(d \omega + \frac{1}{2}\left[\omega,\omega\right] \in \pr{\nForms{2}{M}/I'}  \otimes_H \LieG\). 
In particular, the curvature of a Cartan geometry on a foliation is just the usual notion of curvature of a Cartan geometry on the leaves of the foliation.

\subsection{First order structures}

Suppose that \(V_0\) is a vector space.
The \(V_0\)-valued \emph{frame bundle} \(\frm{V}\) of a vector subbundle \(V=V_M \subset TM\) on a manifold \(M\) is the associated principal bundle, i.e. the set of all pairs \((m,u)\) with \(m \in M\) and \(u \colon V_m \to V_0\) a linear isomorphism. 
Let \(\pi \colon \frm{V} \to M\) be the map \((m,u) \to m\).
The frame bundle is a principal right \(\GL{V_0}\)-bundle under the obvious action \(r_g (m,u) = \left(m, g^{-1} u\right)\).
Let \(V_{\frm{V_M}} = \pr{\pi'}^{-1}V_M\). 
Define a section \(\sigma\) of \(V^*_{\frm{V_M}} \otimes V_0\) on \(\frm{V_M}\) by
\(
v \hook \sigma = u\left(\pi'(m,u)v\right),
\)
where \(v \in V_{(m,u)} \frm{V_M}\).

Suppose that \(G \to \GL{V_0}\) is a morphism of Lie groups and that \(V_M \subset TM\) is a  subbundle.
A \emph{\(G\)-structure} or \emph{first order structure} on \(V_M\) is a principal right \(G\)-bundle \(B \to M\) together with a \(G\)-equivariant bundle map \(B \to \frm{V_M}\).
If \(G_0\) is the kernel of the morphism \(G \to \GL{V_0}\), then \(B/G_0 \to \frm{V}\) is the \emph{underlying immersed first order structure}, and is a submanifold of \(\frm{V_M}\).

\subsection{Underlying first order structures}

Let \(X=G/H\) be a homogeneous space.
Let \(M\) be a manifold with a vector bundle \(V_M \subset TM\), and \(\pi \colon E \to M\) a \((G,X)\)-geometry on \(V_M\).
Let \(\LieG\) and \(\LieH\) be the Lie algebras of \(G\) and \(H\). 
Let \(V_0=\LieG/\LieH\). 
Let \(\frm{V_M}\) be the \(V_0\)-valued frame bundle. 
Let \(\sigma=\omega+\LieH\), a semibasic 1-form defined on \(V_E\) and valued in \(V_0\). 
At each point \(e \in E\) the 1-form \(\sigma\) determines a linear isomorphism \(u \colon V_{M,m} \to V_0\) by the equation \(u(\pi'(e)v) = v \hook \sigma\). 
Map \(e \in E \mapsto u=u(e) \in \frm{V_M}\). 
This map is an \(H\)-structure. 
The fibers of this map consist of the orbits in \(E\) of the subgroup \(H_1\) of \(H\) acting trivially on \(V_0=\LieG/\LieH\).
The map descends to a map \(E/H_1 \to \frm{V_M}\) called the \emph{underlying \(\left(H/H_1\right)\)-structure} or \emph{underlying first order structure} of the \(G/H\)-geometry.

\subsection{Holomorphy}

From now on, all Cartan geometries in this paper will be assumed to be holomorphic Cartan geometries, i.e. \(G\) is a complex Lie group, \(X\) is a complex manifold on which \(G\) acts holomorphically, effectively and transitively, \(E \to M\) is a holomorphic principal bundle on a complex manifold \(M\), \(V_M \subset M\) is a holomorphic vector bundle, and the Cartan connection is holomorphic.
Isomorphisms of Cartan geometries will therefore be biholomorphic.

\subsection{Rational homogeneous varieties and parabolic geometries}

A \emph{rational homogeneous variety} is a pair \((G,X)\) where \(G\) is a complex semisimple Lie group acting effectively, transitively and holomorphically on a compact complex manifold \(X\). 
A \emph{parabolic geometry} is a Cartan geometry modelled on a rational homogeneous variety.
By a theorem of Chevalley \cite{Borel:1991} theorem 11.16 p. 154, if \(G\) is a complex semisimple Lie group, then every parabolic subgroup \(P \subset G\) (i.e. containing a maximal solvable subgroup) is connected and equal to its normalizer in \(G\) and has compact quotient \(X=G/P\) and every rational homogeneous variety arises this way.
The center of \(G\) acts trivially on \(X\), so without loss of generality \(G\) is in adjoint form.

If \(\pr{G,X}\) is a rational homogeneous variety then, up to reordering factors, there is a unique decomposition \(G=\prod_i G_i\) and \(X=\prod_i X_i\) into products so that each \(G_i\) acts trivially on all \(X_j\) except \(X_i\) and \(\pr{G_i,X_i}\) is a rational homogeneous variety which cannot be thus decomposed, i.e. is irreducible.

\subsection{Cominiscule varieties}

A \emph{cominiscule variety} is a rational homogeneous variety \((G,X)\) so that the stabilizer \(P\) of a point \(x_0 \in X\) acts on \(T_{x_0} X\) as a sum of irreducible representations; see \cite{Baston/Eastwood:1989,Landsberg:2005}.
Each cominiscule variety \((G,X)\) admits a K\"ahler metric, invariant under a maximal compact subgroup of \(G\), in which \(X\) becomes a compact Hermitian symmetric space.
The group \(G\) in our definition is a complex group of biholomorphisms of \(M=G/P\); \(G\) does \emph{not} preserve any metric on \(G/P\).
For example, if \(G/P=\Proj{n}\), we will have \(G=\PSL{n+1,\C{}}\), \emph{not} \(\PU{n+1}\). 
For this reason, we use the term \emph{cominiscule variety} rather than \emph{compact Hermitian symmetric space}.

\newcommand{\dynkinradius}{.04cm}
\newcommand{\dynkinstep}{.35cm}
\newcommand{\dynkindot}[2]{\fill (\dynkinstep*#1,\dynkinstep*#2) circle (\dynkinradius);}
\newcommand{\dynkinXsize}{1.5}
\newcommand{\dynkincross}[2]{
\draw[thick] (#1*\dynkinstep-\dynkinXsize,#2*\dynkinstep-\dynkinXsize) -- (#1*\dynkinstep+\dynkinXsize,#2*\dynkinstep+\dynkinXsize);
\draw[thick] (#1*\dynkinstep-\dynkinXsize,#2*\dynkinstep+\dynkinXsize) -- (#1*\dynkinstep+\dynkinXsize,#2*\dynkinstep-\dynkinXsize);
}
\newcommand{\dynkinline}[4]{\draw[thin] (\dynkinstep*#1,\dynkinstep*#2) -- (\dynkinstep*#3,\dynkinstep*#4);}
\newcommand{\dynkindots}[4]{\draw[dotted] (\dynkinstep*#1,\dynkinstep*#2) -- (\dynkinstep*#3,\dynkinstep*#4);}
\newcommand{\dynkindoubleline}[4]{\draw[double,postaction={decorate}] (\dynkinstep*#1,\dynkinstep*#2) -- (\dynkinstep*#3,\dynkinstep*#4);}

\newenvironment{dynkin}{\begin{tikzpicture}[decoration={markings,mark=at position 0.7 with {\arrow{>}}}]}
{\end{tikzpicture}}

\begin{figure}
\renewcommand*{\arraystretch}{1.5}
\begin{tabular}{>{\(}r<{\)}m{2cm}m{8cm}}
A_n &
  \begin{dynkin}
	\dynkinline{1}{0}{2}{0};
    \dynkindots{2}{0}{3}{0};
    \dynkinline{3}{0}{5}{0};
    \dynkindots{5}{0}{6}{0};
    \dynkinline{6}{0}{7}{0};
    \foreach \x in {1,...,7}
    {
       \ifnum \x=4 {\dynkincross{\x}{0}}
       \else {\dynkindot{\x}{0}}
       \fi
    }
  \end{dynkin}
&
Grassmannian of \(k\)-planes in \(\C{n+1}\) \\
B_n &
  \begin{dynkin}
	\dynkinline{1}{0}{2}{0};
    \dynkindots{2}{0}{3}{0};
    \dynkinline{3}{0}{4}{0};
    \dynkindoubleline{4}{0}{5}{0};
    \dynkincross{1}{0};
    \foreach \x in {2,...,5}
    {
        \dynkindot{\x}{0}
    }
  \end{dynkin}
& 
\((2n-1)\)-dimensional hyperquadric, i.e. the variety of null lines in \(\C{2n+1}\)
\\
C_n 
&
  \begin{dynkin}
	\dynkinline{1}{0}{2}{0};
    \dynkindots{2}{0}{3}{0};
    \dynkinline{3}{0}{4}{0};
    \dynkindoubleline{5}{0}{4}{0};
    \dynkincross{5}{0};
    \foreach \x in {1,...,4}
    {
        \dynkindot{\x}{0}
    }
  \end{dynkin}
& 
space of Lagrangian \(n\)-planes in \(\C{2n}\)
\\
D_n
&
  \begin{dynkin}
    \foreach \x in {2,...,4}
    {
        \dynkindot{\x}{0}
    }
    \dynkindot{4.5}{.9}
    \dynkindot{4.5}{-.9}
    \dynkincross{1}{0}
    \dynkinline{1}{0}{2}{0}
    \dynkindots{2}{0}{3}{0}
    \dynkinline{3}{0}{4}{0}
    \dynkinline{4}{0}{4.5}{.9}
    \dynkinline{4}{0}{4.5}{-.9}
  \end{dynkin}
& \((2n-2)\)-dimensional hyperquadric, i.e. the variety of null lines in \(\C{2n}\)
\\
D_n
&
  \begin{dynkin}
    \foreach \x in {1,...,4}
    {
        \dynkindot{\x}{0}
    }
    \dynkincross{4.5}{.9}
    \dynkindot{4.5}{-.9}
    \dynkinline{1}{0}{2}{0}
    \dynkindots{2}{0}{3}{0}
    \dynkinline{3}{0}{4}{0}
    \dynkinline{4}{0}{4.5}{.9}
    \dynkinline{4}{0}{4.5}{-.9}
  \end{dynkin}
&
one component of the variety of maximal dimension null subspaces of \(\C{2n}\)
\\
D_n
&
  \begin{dynkin}
    \foreach \x in {1,...,4}
    {
        \dynkindot{\x}{0}
    }
    \dynkincross{4.5}{-.9}
    \dynkindot{4.5}{.9}
    \dynkinline{1}{0}{2}{0}
    \dynkindots{2}{0}{3}{0}
    \dynkinline{3}{0}{4}{0}
    \dynkinline{4}{0}{4.5}{.9}
    \dynkinline{4}{0}{4.5}{-.9}
  \end{dynkin}
&
the other component
\\
E_6 
&
  \begin{dynkin}
    \foreach \x in {2,...,5}
    {
        \dynkindot{\x}{0}
    }
    \dynkincross{1}{0}
    \dynkindot{3}{1}
    \dynkinline{1}{0}{5}{0}
    \dynkinline{3}{0}{3}{1}
  \end{dynkin}
&
complexified octave projective plane
\\
E_6 
&
  \begin{dynkin}
    \foreach \x in {1,...,4}
    {
        \dynkindot{\x}{0}
    }
    \dynkincross{5}{0}
    \dynkindot{3}{1}
    \dynkinline{1}{0}{5}{0}
    \dynkinline{3}{0}{3}{1}
  \end{dynkin}
&
its dual plane
\\
E_7
&
  \begin{dynkin}
    \foreach \x in {1,...,5}
    {
        \dynkindot{\x}{0}
    }
    \dynkincross{6}{0}
    \dynkindot{3}{1}
    \dynkinline{1}{0}{6}{0}
    \dynkinline{3}{0}{3}{1}
  \end{dynkin}
&
the space of null octave 3-planes in octave 6-space
\end{tabular}
\caption{The classification of irreducible cominiscule varieties; every cominiscule variety is a product of these.}
\end{figure}

\begin{example}\label{example:dual}
Suppose that \(X=G/P\) is a cominiscule variety.
Denote by \(X'=G'/P'\) its dual noncompact Hermitian symmetric space \cite{Helgason:1978}, i.e. \(G' \subset G\) is a real form acting on \(X\), and \(P'=P\cap G'\) is compact, and \(G'\) acts on \(X\) with an open orbit \(X'=G'/P'\). 
Pullback the standard flat \((G,X)\)-geometry on \(X\) to a flat \((G,X)\)-geometry on \(X'\). 
If \(\Gamma \subset G'\) is a cocompact lattice, then the flat \((G,X)\)-geometry on \(X'\) is pulled back from a unique flat \((G,X)\)-geometry on the manifold \(M=\Gamma \backslash X'\), called the \emph{standard geometry} on \(M\).
\end{example}

\begin{example}
If \(\pr{G,X}=\pr{\PSL{n+1,\C{}},\Proj{n}}\) then any holomorphic \(\pr{G,X}\)-geometry is called a \emph{holomorphic projective connection}.
The space of holomorphic projective connections on any Riemann surface is canonically identified with the space of holomorphic quadratic differentials \cite{Loray/MarinPerez:2009}.
\end{example}

\begin{example}
Call a holomorphic cominiscule geometry \emph{standard} if, after perhaps replacing by the pull back to a finite \'etale covering space, it becomes a product geometry of the form \(M=M' \times \prod_j M_j\) is a product, where each \(M_j\) is a compact Riemann surface with a holomorphic projective connection and \(M'=\Gamma \backslash X'\) is the standard geometry, for \(X'\) the noncompact dual of a cominiscule variety.
\end{example}

\begin{theorem}\label{theorem:main}
If \(M\) is a connected compact complex manifold with \(\Chern[1]{M} < 0\) and bearing a holomorphic cominiscule geometry, then \(M\) admits a unique holomorphic normal cominiscule geometry, and moreover that holomorphic normal cominiscule geometry is standard.
In particular, \(M\) is a locally Hermitian symmetric variety.
\end{theorem}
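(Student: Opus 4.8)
The plan is to run the Kobayashi--Ochiai strategy in the holomorphic category, in four stages: reduce to an irreducible model, extract a canonical normal Cartan connection from the given geometry, prove that connection is flat by exploiting \(\Chern[1]{M}<0\), and integrate the flat geometry to a locally Hermitian symmetric structure. First I would note that \(\Chern[1]{M}<0\) makes \(K_M\) ample, so \(M\) is projective and carries a unique K\"ahler--Einstein metric \(g\) with \(\operatorname{Ric}(g)=-g\) and no nonzero holomorphic vector fields. If the cominiscule model \(G/P=\prod_i G_i/P_i\) is reducible, I would invoke the result quoted in the introduction that a holomorphic parabolic geometry with reducible model on a smooth projective variety induces a holomorphic local product structure; a projective manifold with \(\Chern{1}<0\) that is locally a product becomes, after a finite \'etale cover, an honest product whose factors again have negative first Chern class, so the theorem reduces to an irreducible model \(X=G/P\). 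The one-dimensional factors (model \(\Proj{1}\)) are treated separately: on a curve \(\Omega^2=0\), so every holomorphic cominiscule geometry is flat, and the required conclusions for compact Riemann surfaces of genus \(\ge 2\) are part of the definition of \emph{standard}.

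\emph{Normalization.} A holomorphic cominiscule geometry \(E\to M\) has an underlying holomorphic first order structure --- a \(G_0\)-structure, \(G_0=P/P_+\) acting on \(\mathfrak g_{-1}=\mathfrak g/\mathfrak p\) --- and by the result of the introduction the principal \(P\)-bundle \(E\) is determined up to isomorphism by this first order structure. On the fixed bundle \(E\) the Kostant codifferential \(\partial^*\) is an algebraic bundle map, and the usual normalization --- adjusting the Cartan connection within the class of connections inducing the given first order structure until \(\partial^*\) annihilates the curvature --- is pointwise linear algebra that applies verbatim to holomorphic data; the \(\partial^*\)-Hodge decomposition makes the outcome unique on \(E\). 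This produces a holomorphic normal cominiscule geometry; uniqueness of the normal geometry on \(M\) will then follow once the final stage shows \(M\) is locally Hermitian symmetric, by the rigidity of flat cominiscule geometries on such varieties.

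\emph{Flatness.} For the normal geometry the harmonic curvature \(\kappa_H\) is a holomorphic section of the homogeneous bundle \(\mathcal H=E\times_P H^2(\mathfrak g_-,\mathfrak g)\), and by Kostant's theorem \(\mathcal H\) is a direct sum of irreducible homogeneous bundles, each of strictly positive homogeneity, i.e.\ on each of which the grading element of \(\mathfrak g_0\) acts by a positive weight. I claim \(H^0(M,\mathcal H)=0\), to be proved by a Bochner--Weitzenb\"ock argument on \((M,g)\): the K\"ahler--Einstein metric induces a Hermitian metric on \(\mathcal H\) (via a reduction of the associated structure group to a compact form) for which one obtains an identity of the shape
\[
\int_M \lvert\bar\partial\kappa_H\rvert^2 \;=\; \int_M \langle\mathcal R\,\kappa_H,\kappa_H\rangle \;+\; \int_M \lvert\nabla\kappa_H\rvert^2,
\]
where the zero-order operator \(\mathcal R\) has a definite sign: the positive homogeneity of \(\mathcal H\) couples to \(\operatorname{Ric}(g)=-g\) (equivalently to the ampleness of \(K_M\)), while the Fano nature of the model \(X\) contributes with the compatible sign, forcing any holomorphic \(\kappa_H\) to vanish. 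Granting \(\kappa_H=0\), the second Bianchi identity together with induction on homogeneity --- the standard fact that the curvature of a normal parabolic geometry is generated by its harmonic part --- forces the whole curvature to vanish, so the normal cominiscule geometry is flat. This Weitzenb\"ock step is where I expect the real difficulty: one must check that the K\"ahler--Einstein metric is compatible with the first order structure, so that the compact reduction exists, and compute from Kostant's weights that \(\mathcal R\) has the required sign; these two points carry essentially all the work of the theorem.

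\emph{Integration.} A flat holomorphic cominiscule geometry on \(M\) is a holomorphic \((G,X)\)-structure, given by a developing map \(\delta\colon\widetilde M\to X=G/P\) and a holonomy homomorphism \(\rho\colon\pi_1(M)\to G\). The pulled-back K\"ahler--Einstein metric on \(\widetilde M\) is \(\rho\)-invariant and, being compatible with the flat structure, agrees up to scale with the restriction of the invariant metric of a Hermitian symmetric structure; hence \(\rho(\pi_1M)\) lies in the real form \(G'\subset G\), \(\delta\) is a biholomorphism of \(\widetilde M\) onto the bounded symmetric domain \(X'=G'/P'\), and \(\rho(\pi_1M)=\Gamma\subset G'\) is a cocompact lattice with \(M=\Gamma\backslash X'\). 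Reassembling the product and \'etale cover of the first stage, \(M\) is a finite quotient of a product of locally Hermitian symmetric varieties --- hence itself a locally Hermitian symmetric variety --- and its normal cominiscule geometry is the standard one, which completes the proof.
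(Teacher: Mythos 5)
Your outline diverges from the paper in two places where the divergence is not just stylistic but leaves real gaps. First, the reduction to an irreducible model does not work as stated. From a holomorphic local product structure (equivalently, the de~Rham splitting of the universal cover) you cannot conclude that some finite \'etale cover of \(M\) is an honest product: irreducible cocompact lattices in product groups (Hilbert modular varieties, irreducible quotients of products of bounded symmetric domains) give compact quotients with \(\Chern{1}<0\) whose universal cover splits but which have no finite cover splitting as a product. The paper therefore never reduces to an irreducible model; it works on \(\tilde{M}=\prod_i \tilde{M}_i\) with a possibly irreducible lattice \(\Gamma\), and the hardest case is exactly the one your reduction hides: when the model has \(\Proj{1}\) factors, each such factor of \(\tilde{M}\) carries an a priori nonzero \(\Gamma\)-invariant quadratic differential deforming the flat projective connection, and killing it requires the Ehresmann--Thurston deformation theorem plus Weil's rigidity theorem for irreducible lattices of rank \(\ge 2\). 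Your plan disposes of curve factors by saying ``flat on curves and standard by definition,'' which addresses neither the uniqueness of the normal geometry nor the irreducible-lattice interaction between factors; and your deferred uniqueness claim (``rigidity of flat cominiscule geometries on locally Hermitian symmetric varieties'') is precisely what fails naively in the presence of curve factors, so it cannot be invoked as a black box at the end.

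Second, the flatness step is asserted rather than proved, and its sign heuristic is doubtful. The harmonic-curvature bundle has strictly positive homogeneity, i.e.\ it is built from tensors with an excess of covariant indices (inside \(\Lm{2}{T^*M}\otimes T^*M\otimes TM\) and worse), and on a manifold with \(\Chern{1}<0\) such bundles tend to be \emph{positive} and to have many holomorphic sections (\(\Sym{2}{T^*}\) on a curve of genus \(\ge 2\) is the basic example), so ``negative Ricci couples to positive homogeneity'' does not by itself give a Bochner vanishing. Moreover the compatibility of the K\"ahler--Einstein metric with the a priori arbitrary holomorphic first order structure --- which you defer as ``the real difficulty'' --- is in effect the whole theorem: the paper obtains it from the Bochner--Yano fact that holomorphic sections of \(\pr{T^*\otimes T}^{\otimes}\) are parallel (corollary~\ref{corollary:ParallelTensors}), then the de~Rham splitting and Berger's holonomy classification identify each non-projective factor of \(\tilde{M}\) with the noncompact dual symmetric space and pin down its first order structure; projective factors of dimension \(\ge 2\) need in addition the Chern-class identity of proposition~\ref{proposition:ConstantSectional} via lemma~\ref{lemma:ChernClassRelations} to force constant holomorphic sectional curvature, and then the Klingler--Mok vanishing theorem (theorem~\ref{theorem:KlinglerMok}) to show the normal projective connection is the flat one. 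None of these steps is replaced by the Weitzenb\"ock identity you write down, so as it stands the proposal has the right overall shape (normalize, show standard/flat, integrate) but the two load-bearing arguments --- the treatment of reducible models with irreducible lattices, and the vanishing/rigidity input --- are missing.
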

Theorem~\ref{theorem:main} is a consequence of theorem~\vref{theorem:big}.

\subsection{Roots}

Helgason \cite{Helgason:1978} and Knapp \cite{Knapp:2002} both prove  all of the results we will use on Lie algebras; we give some definitions only to fix notation; also see Landsberg \cite{Landsberg:2005}.
Suppose that \(G/P\) is a cominiscule variety.
Fix a Cartan subalgebra of \(\LieG\).
Let \(\LieB\) be the Borel subalgebra, i.e. the subalgebra consisting of the sum of the Cartan subalgebra and all of the positive root spaces. 
The Lie algebra \(\LieP\) can be made to contain \(\LieB\) by conjugation \cite{Fulton/Harris:1991} p. 382; assume without loss of generality from now on that \(\LieB \subset \LieP\).
For each root \(\alpha\), let \(\LieG_{\alpha}\) be the root space of the root \(\alpha\). 
Write \(\LieG\) as a sum of simple Lie algebras \(\LieG = \bigoplus_i \LieG^i\).
Let \(\LieP^i = \LieG^i \cap \LieP\).
Then \(\LieP = \bigoplus_i \LieP^i\).
A root \(\alpha\) is \emph{compact} if both \(\LieG_{\alpha} \cap \LieP \ne \left\{0\right\}\) and \(\LieG_{-\alpha} \cap \LieP \ne \left\{0\right\}\).
Each \(\LieG^i\) has a unique noncompact simple root \(\alpha^i\).
Write a noncompact root of \(\LieG\) as \(\beta^+\) or \(\beta^-\) to indicate that it is a noncompact positive or noncompact negative root. 
If we write a root without a superscript, e.g. \(\beta\), this means that it is allowed to be any of the above, or even a compact root.
If \(\beta^-\) is any (necessarily noncompact negative) root, we will write \(\beta^+\) to mean \(-\beta^-\), etc.

\subsection{The grading}

Grade \(\LieG\) into a sum:
\(
\LieG = \LieG_- \oplus \LieG_0 \oplus \LieG_+,
\)
where \(\LieG_-\) is the sum of all noncompact negative root spaces, \(\LieG_{+}\) is the sum of all noncompact positive root spaces, and \(\LieG_0\) is the sum of the Cartan subalgebra of \(\LieG\) with all of the compact root spaces, so
\(
\LieP = \LieG_0 \oplus \LieG_+.
\)
Each of the Lie algebras \(\LieG_-, \LieG_0, \LieG_+\) is the Lie algebra of a connected algebraic Lie subgroup \(G_-, G_0, G_+\) of \(G\). 
The subgroups \(G_-\) and \(G_+\) are abelian and simply connected (see Knapp \cite{Knapp:2002} theorem 7.129 p. 506).
This grading of \(\LieG\) is only \(G_0\)-invariant, but the associated filtration is \(P\)-invariant and \(P=G_0G_+\).
Identify
\(
\LieG_-^* = \LieG_+
\)
using the Killing form.
If \(X_+ \in \LieG_+\) and \(Y=Y_-+Y_0+Y_+ \in \LieG\) then
\begin{equation}\label{equation:AdexpX}
\Ad\left(e^{X_+}\right)Y = Y_- + \left(Y_0-\left[X_+,Y_-\right]\right) + Z_+
\end{equation}
where
\[
Z_+ = 
Y_+ - \left[X_+,Y_0\right] + \frac{1}{2}\left[X_+,\left[X_+,Y_-\right]\right].
\]

\subsection{Chevalley bases}\label{subsubsection:ChevalleyBases}

A \emph{Chevalley basis} \(X_{\alpha}, H_{\alpha}\) (see Serre \cite{Serre:2001}) is a basis
of \(\LieG\) parameterized by roots \(\alpha \in \LieH^*\) (with \(\LieH \subset \LieG\) a Cartan
subalgebra) for which
\begin{enumerate}
\item \(\left[H,X_{\alpha}\right]=\alpha(H) X_{\alpha}\)
for each \(H \in \LieH\)
\item \(\alpha\left(H_{\beta}\right)=
2 \frac{\left<\alpha, \beta\right>}{\left<\beta,\beta\right>}\)
(measuring inner products via the Killing form)
\item \(\left[H_{\alpha},H_{\beta}\right]=0\),
\item
\[
\left[X_{\alpha},X_{\beta}\right]=
\begin{cases}
H_{\alpha}, & \text{ if } \alpha+\beta=0, \\
N_{\alpha\beta} X_{\alpha+\beta}, & \text{otherwise} \\
\end{cases}
\]
with
\begin{enumerate}
\item \(N_{\alpha\beta}\) an integer,
\item \(N_{-\alpha,-\beta}=-N_{\alpha\beta}\),
\item If \(\alpha, \beta, \) and \(\alpha+\beta\) are roots, then
\(N_{\alpha\beta}=\pm (p+1)\),
where \(p\) is the largest integer for which \(\beta-p \, \alpha\) is a root,
\item
\(N_{\alpha \beta}=0\) if \(\alpha+\beta = 0\) or
if any of \(\alpha, \beta,\) or  \(\alpha+\beta\) is not a root.
\end{enumerate}
\end{enumerate}

\section{%
\texorpdfstring%
{Classification of holomorphic parabolic geometries with $\Chern{1} \ge 0$}%
{Classification of holomorphic parabolic geometries with first Chern class nonnegative}%
}

\begin{example}\label{example:tori}
Suppose that \(\bar{P} \subset \bar{Q}\) is a parabolic subgroup of a complex semisimple Lie group \(\bar{G}\) and let \(\bar{X}=\bar{G}/\bar{P}\).
Denote the Lie algebra of \(\bar{P}\) as \(\bar\LieP\).
Take a complex linear \(\bar\LieP\)-complementary subspace \(V \subset \bar\LieG\), letting \(s \in V^* \otimes \LieG\) be the inclusion map. 
Take a lattice \(\Lambda \subset V\), let \(\bar{M}=V/\Lambda\), and let \(\bar{E} = \pr{V/\Lambda} \times \bar{P}\) with elements written as \(\pr{z+\Lambda,\bar{p}}\).
Let \(\bar\omega=\bar{p}^{-1} \, d\bar{p} + \Ad\pr{\bar{p}}^{-1} s\).
Then \(\bar{E} \to \bar{M}\) is a translation invariant holomorphic \(\pr{G,X}\)-geometry on a complex torus, with holomorphic Cartan connection \(\bar{\omega}\).
\end{example}

\begin{example}\label{example:c1.positive}
Continuing with the previous example, suppose that \(G_0\) is a complex semisimple Lie group and pick parabolic subgroups \(Q_0 \subset G_0\) and \(\bar{Q} \subset \bar{P}\).
Let \(G=G_0 \times \bar{G}, P=Q_0 \times \bar{Q}\) and \(X=G/P\).
Take any group morphism \(\rho \colon \Lambda \to G_0\).
Let \(E'=V \times_{\rho} G_0 \to \bar{M}\) be the associated flat \(G_0\)-bundle.
Let \(\omega_0\) be the holomorphic flat connection on \(E' \to \bar{M}\).
Let \(E = E' \times_{\bar{M}} \bar{E}\), with obvious \(G_0 \times \bar{P}\)-action.
Write the pullback form of \(\omega_0\) to \(E\) also as \(\omega_0\) and similarly write the pullback form of \(\bar{\omega}\) to \(E\) also as \(\bar{\omega}\).
Let \(\omega=\omega_0 + \bar{\omega}\) and let \(M = E/P\).
Then \(E \to M\) is a holomorphic \(\pr{G,X}\)-geometry with Cartan connection \(\omega\).
Moreover, \(M \to \bar{M}\) is a holomorphic fiber bundle with all fibers biholomorphic to \(\pr{G_0/Q_0} \times \pr{\bar{P}/\bar{Q}}\).
In particular, \(\Chern[1]{M} \ge 0\).
\end{example}

\begin{theorem}
Suppose that \(M\) is a connected compact K\"ahler manifold with \(\Chern{1} \ge 0\) and \(\pr{G,X}\) is a rational homogeneous variety with \(\dim M = \dim X\).
Then \(M\) admits a holomorphic \(\pr{G,X}\)-geometry if and only if, after perhaps replacing \(M\) by a finite \'etale covering space,
\begin{enumerate}
\item \(M=X\) with the standard flat \(\pr{G,X}\)-geometry or
\item \(M\) is a complex torus and the holomorphic \(\pr{G,X}\)-geometry is as constructed in example~\vref{example:tori} or
\item
\(M\), with its holomorphic \(\pr{G,X}\)-geometry, is constructed as in example~\vref{example:c1.positive}.
\end{enumerate}
\end{theorem}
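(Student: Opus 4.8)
We indicate the shape of the argument.

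The ``if'' direction is built into the three constructions. The flat model \(M=X\) is a rational homogeneous variety, so \(\Chern[1]{X}>0\); Example~\ref{example:tori} produces a holomorphic \(\pr{G,X}\)-geometry on a complex torus of dimension \(\dim X\), and a torus has \(\Chern{1}=0\); and Example~\ref{example:c1.positive} produces a holomorphic \(\pr{G,X}\)-geometry on a manifold \(M\) which is a holomorphic fiber bundle over a complex torus with rational homogeneous fibers and with \(\dim M=\dim X\), so that \(K_M^{-1}\), being relatively ample over a torus, is nef and \(\Chern[1]{M}\ge 0\). Thus only the ``only if'' direction has content.

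So let \(M\) be connected compact K\"ahler with \(\Chern[1]{M}\ge 0\), let \(X=G/P\) with \(G\) in adjoint form and \(\LieB\subset\LieP\), and let \(\pi\colon E\to M\) be a holomorphic \(\pr{G,X}\)-geometry with Cartan connection \(\omega\). The Cartan connection trivializes \(TE\cong E\times\LieG\) and trivializes the vertical bundle of \(\pi\) as \(E\times\LieP\), so \(\pi^{*}TM\cong E\times\pr{\LieG/\LieP}\) is holomorphically trivial; every Chern class of \(M\) becomes trivial after pullback to \(E\), and in particular \(\Chern[1]{M}\) --- nef by hypothesis --- becomes numerically trivial on \(E\). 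To exploit this I would invoke the structure theory of compact K\"ahler manifolds with nef anticanonical class: the Albanese map \(\alpha\colon M\to A=\operatorname{Alb}(M)\) is a surjective holomorphic submersion, hence a holomorphic fiber bundle with connected fiber over the complex torus \(A\). One then argues by cases on \(q=\dim A\).

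If \(q=\dim M\), then \(\alpha\) is a finite unramified covering of \(A\), so \(M\) is itself a complex torus, and it remains to classify holomorphic \(\pr{G,X}\)-geometries on a torus: the plan is to pull back to the universal cover \(\C{n}\), use that \(E\) is complex parallelizable and is an iterated fiber bundle over the torus (a \(\LieG_{+}\)-bundle over a reductive \(G_0\)-bundle), and deduce --- after a finite \'etale cover, using that the curvature of \(\omega\) is then a holomorphic and hence translation-invariant tensor --- that the geometry is the one of Example~\ref{example:tori}. If \(q=0\), one must show \(M\cong X\) with its standard flat geometry: the plan is to use the positivity of \(\Chern[1]{M}\) together with \(b_1(M)=0\) to force the curvature of \(\omega\) to vanish, then to develop the resulting flat geometry to a local biholomorphism from the universal cover of \(M\) onto \(X\), which is a covering map by compactness, hence an isomorphism since \(X\) is simply connected, whence \(M=X\) because a rational homogeneous variety has positive Euler characteristic and so admits no nontrivial free finite group action inside \(G\). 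If \(0<q<\dim M\), the plan is to show the model \(\pr{G,X}\) must be reducible (were it irreducible, the curvature analysis would force \(q\in\{0,\dim M\}\)) and that its factors are adapted to the Albanese fibration --- the \(P\)-invariant filtration of \(\LieG/\LieP\) producing holomorphic distributions on \(M\) which match the vertical tangent bundle of \(\alpha\); the product and reduction formalism for parabolic geometries (a reducible model forces a local holomorphic product structure) then presents \(M\), after a finite \'etale cover, as a fiber bundle over a complex torus with rational homogeneous fibers associated to a flat \(G_0\)-bundle, that is, as the geometry of Example~\ref{example:c1.positive}.

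The main obstacle is the mixed case \(0<q<\dim M\): Cartan geometries do not restrict to submanifolds, so it is not formal that the \(\pr{G,X}\)-geometry ``sees'' the Albanese fibration, and one must additionally rule out Calabi--Yau or hyperk\"ahler factors in the fibers, which the nef-anticanonical structure theory by itself permits; both points must be extracted from a careful analysis of the curvature of \(\omega\) under the hypothesis \(\Chern[1]{M}\ge 0\), combined with the product and reduction formalism for parabolic geometries. The curvature-vanishing step when \(q=0\), and the translation-invariance step on the torus, are the other two technical cores, and all three ultimately rest on the holomorphic triviality of \(\pi^{*}TM\) together with the positivity of \(\Chern[1]{M}\).
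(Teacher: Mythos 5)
Your ``only if'' direction is not a proof but a programme, and the three steps you yourself flag as ``plans'' are exactly where the content lies; none of them goes through with the tools you invoke. The paper's argument rests on two inputs you do not have: the theorem of Campana--Demailly--Peternell (after a finite \'etale cover, a compact K\"ahler manifold with \(\Chern{1}\ge 0\) fibers with rationally connected fibers over a base with \(\Chern{1}=0\)), and, crucially, the Biswas--McKay ``dropping'' theorem, which says that a holomorphic parabolic geometry on \(M\) makes \(M\) a fiber bundle with \emph{rational homogeneous} fibers over a base \(\bar{M}\) containing no rational curves, with the parabolic geometry descending to \(\bar{M}\). It is this second theorem that makes the Cartan geometry ``see'' a fibration; your substitute, the Albanese map of a manifold with nef anticanonical class, has no a priori relation to the geometry, as you concede (``Cartan geometries do not restrict to submanifolds''), and its fibers may contain Calabi--Yau or hyperk\"ahler factors that your framework cannot exclude. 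Your proposed dichotomy in the mixed case (``were the model irreducible, the curvature analysis would force \(q\in\{0,\dim M\}\)'') is asserted, not argued, and no such curvature analysis is supplied. So the central case \(0<q<\dim M\) is simply open in your write-up, and the case \(q=0\) (forcing flatness from \(b_1=0\) and positivity) is likewise unargued --- in the paper it is absorbed into the dropping theorem rather than proved by a curvature-vanishing argument.

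The torus case also hides a genuine error: the curvature of \(\omega\) is a section of the associated bundle \(E\times_P\bigl(\Lambda^2(\LieG/\LieP)^*\otimes\LieG\bigr)\), not a tensor built from the (trivial) tangent bundle of the torus, so it is not ``holomorphic and hence translation-invariant''; translation-invariance of the geometry on a torus, after a finite cover, is the content of the separate classification the paper cites (McKay, \emph{op.\ cit.}, for parabolic geometries on manifolds with \(\Chern{1}=0\)), which is a substantial theorem, not a remark. Finally, after the structural reduction the paper still has to do algebra you omit entirely: splitting \(G=G_0\times\bar{G}\) and \(P=Q_0\times\bar{Q}\) using semisimplicity and adjoint form, identifying \(E\) with \(E'\times_{\bar{M}}\bar{E}\), and classifying the flat \(G_0\)-bundle by a representation \(\Lambda\to G_0\) to land exactly in the construction of example~\ref{example:c1.positive}. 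In short, the approach is genuinely different from the paper's but, as written, it replaces the paper's two key external theorems with placeholders and therefore has gaps at every branch of the case analysis.
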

\begin{proof}
By a theorem of Campana, Demailly and Peternell \cite{Campana/Demailly/Peternell:2013}, every compact K\"ahler manifold \(M\) with \(\Chern{1} \ge 0\) is, up to a finite \'etale covering space, a holomorphic fiber bundle, with rationally connected fibers, over a compact K\"ahler base with \(\Chern{1}=0\), so without loss of generality assume that \(M\) is such a bundle \(M \to M'\).

If \(M\) also admits a holomorphic parabolic geometry, say modelled on a rational homogeneous variety \(\pr{G,X}\), \(X=G/P\), then \(M\) is a holomorphic fiber bundle with rational homogeneous fibers \(M \to \bar{M}\) over a compact K\"ahler manifold \(\bar{M}\) which contains no rational curves, and the holomorphic \(\pr{G,X}\)-geometry on \(M\) drops to a holomorphic parabolic geometry on \(\bar{M}\) \cite{Biswas/McKay:2010a} theorem 2 p. 4.
This means precisely that there is a surjective morphism of complex semisimple Lie groups \(G \to \bar{G}\), say with kernel \(G_0\), with both \(G\) and \(\bar{G}\) in adjoint form, carrying \(P\) to lie inside a complex parabolic subgroup \(P \to \bar{P} \subset \bar{G}\), and the bundle \(P \to E \to M\) of the parabolic geometry maps equivariantly \(E \to \bar{E}\) to a holomorphic \(\bar{P}\)-bundle \(\bar{P} \to \bar{E} \to \bar{M}\), and there is a holomorphic Cartan connection \(\bar{\omega}\) on \(\bar{E}\) which pulls back to \(\omega+\LieG_0\) and \(\omega\) is a Cartan connection for \(E \to \bar{M}\).

Since all rational curves in \(M\) live in the fibers of \(M \to \bar{M}\), the map \(M \to \bar{M}\) drops to a holomorphic fiber bundle mapping \(M' \to \bar{M}\).
The fibers of \(M'\to\bar{M}\) have preimages in \(M\) which are the fibers of \(M \to \bar{M}\), and so are rationally connected.
Any immersed rational curve \(C \subset F\) in a fiber \(F \subset M'\to\bar{M}\) has preimage a smooth subvariety of \(M\) which lies in a fiber of \(M \to \bar{M}\), so there is a copy of this variety in every fiber of \(M\to\bar{M}\), so this variety has deformations moving it around in \(M\), and so its images in \(M'\) allow that rational curve to deform freely, i.e. the ambient tangent bundle \(\left.TM'\right|_C\) is a sum of line bundles of nonnegative degree, one of which, \(TC\), has positive degree.
But the sum of these degrees must be \(\Chern[1]{M'}=0\), a contradiction unless there are no rational curves in \(M'\), i.e. \(M'=\bar{M}\).

Since \(\Chern[1]{\bar{M}}=0\) and \(\bar{M}\) admits a holomorphic parabolic geometry, it follows \cite{McKay:2011} that, after perhaps replacing \(\bar{M}\) by a finite \'etale covering space (and thereby replacing \(M\) by the pullback bundle), \(\bar{M}\) is a complex torus.
Moreover the parabolic geometry on \(\bar{M}\) is obtained by the process given in example~\vref{example:c1.positive} \cite{McKay:2011}.

Because \(G\) is semisimple, the Lie algebra \(\LieG_0\) of \(G_0\) is a sum of irreducible \(G\)-modules, i.e. of simple ideals of \(G\), and we can inductively replace any expression of \(\LieG\) as a sum of simple ideals with an expression in which the ideals are drawn from \(\LieG_0\) or from a \(\LieG_0\)-complementary \(G\)-invariant sum of simple ideals, which is then isomorphic to \(\bar{\LieG}\), so \(\LieG=\LieG_0 \oplus \bar\LieG\).
Since \(G\) and \(\bar{G}\) are in adjoint form, so is \(G_0\), and therefore \(G=G_0 \times \bar{G}\).
We need to have \(P\) contained in the preimage of \(\bar{P}\), i.e. in \(G_0 \times \bar{P}\), and to be parabolic, so \(P=Q_0 \times \bar{Q}\) for some parabolic subgroups \(Q_0 \subset G_0\) and \(\bar{Q} \subset \bar{P}\).
Since \(\omega\) is a Cartan connection for the principal bundle \(G_0 \times \bar{P} \to E \to \bar{M}\), \(\omega_0\) is a flat connection on the bundle \(G_0 \to E'=E/\bar{P} \to \bar{M}\).

Clearly we can recover the principal bundle \(E \to \bar{M}\) from the two principal bundles \(E' \to \bar{M}\) and \(\bar{E} \to \bar{M}\) as \(E=E' \times_{\bar{M}} \bar{E}\).
We can recover the Cartan connection on \(E\) by pulling back the flat connection from \(E'\) and the Cartan connection from \(\bar{E}\), and with obvious \(G_0 \times \bar{P}\)-action.
We can recover \(M\) as \(M=E/\pr{Q_0 \times \bar{Q}}\).
Therefore we can just pick any holomorphic principal \(G_0\)-bundle \(E' \to \bar{M}\) with any flat holomorphic connection, and pick any holomorphic parabolic geometry on \(\bar{M}\), and build a complex manifold \(M\) and its parabolic geometry as above.

Holomorphic \(G_0\)-bundles \(E' \to \bar{M}\) with holomorphic flat connection are classified up to isomorphism by conjugacy classes of representations of the fundamental group, i.e. group morphisms \(\Lambda \to G_0\), so the resulting geometry is constructed as in example~\vref{example:c1.positive}.
\end{proof}

\section{Invariant tensors on cominiscule varieties}

\subsection{The fundamental tensor of a cominiscule variety}
There is a well known invariant holomorphic tensor on any cominiscule variety \(G/P\); see Kobayashi and Ochiai \cite{Kobayashi/Ochiai:1981b}. 
Take any elements \(X_- \in \LieG_-\) and \(\xi^- \in \LieG_-^*\). 
Pick out the element \(Y_+ \in \LieG_+\) dual to \(\xi^-\) under the Killing form of \(\LieG\).
Define \(\tau\left(X_-,\xi^-\right) = \ad \left[X_-,Y_+\right] \colon \LieG_- \to \LieG_-\);
\(
\tau \in \LieG_-^* \otimes \LieG_- \otimes \LieG_-^* \otimes \LieG_-.
\)
We extend \(\tau\) by \(G\)-equivariance to a tensor on \(G/P\), a holomorphic section of 
\(T^* \otimes T \otimes T^* \otimes T\), where \(T=T(G/P)\) is the holomorphic
tangent bundle.

\begin{lemma}
Suppose that \(P \subset G\) is a parabolic subgroup of a complex semisimple Lie group \(G\) in adjoint form.
Split \(P\) into the Langlands decomposition \(P=G_0 G_+\) as above.
The adjoint action of \(G_0\) on \(\LieG/\LieP\) is faithful.
\end{lemma}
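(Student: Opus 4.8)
The plan is to show that the kernel $N$ of the adjoint action of $G_0$ on $\LieG/\LieP$ lies in the kernel of $\Ad \colon G \to \GL{\LieG}$; since $G$ is in adjoint form, the latter is trivial, so $N$ is trivial.

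First I would replace $\LieG/\LieP$ by $\LieG_-$: from $\LieG = \LieG_- \oplus \LieG_0 \oplus \LieG_+$ and $\LieP = \LieG_0 \oplus \LieG_+$, and since the grading is $G_0$-invariant, there is a $G_0$-module isomorphism $\LieG/\LieP \cong \LieG_-$. So take $g \in G_0$ with $\Ad(g)$ acting as the identity on $\LieG_-$; I want $g = e$. The Killing form is $\Ad$-invariant and restricts to the nondegenerate $G_0$-invariant pairing of $\LieG_-$ with $\LieG_+$ used above to identify $\LieG_-^* = \LieG_+$; hence $\Ad(g)$ acts as the identity on $\LieG_+$ as well.

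The crux is the identity $[\LieG_-, \LieG_+] = \LieG_0$, which I would prove one simple ideal at a time. Writing $\LieG = \bigoplus_i \LieG^i$, each $\LieG^i$ is nontrivially graded, because it has a noncompact simple root $\alpha^i$, so $\LieG^i_- \neq 0$ and $\LieG^i_+ \neq 0$; then $\LieG^i_- \oplus [\LieG^i_-, \LieG^i_+] \oplus \LieG^i_+$ is a nonzero graded ideal of $\LieG^i$ --- a short check with the Jacobi identity, using that $\LieG^i_-$ and $\LieG^i_+$ are abelian --- so by simplicity it equals $\LieG^i$ and $[\LieG^i_-, \LieG^i_+] = \LieG^i_0$; summing over $i$ gives $[\LieG_-, \LieG_+] = \LieG_0$. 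Granting this, $\Ad(g)$ is a Lie algebra automorphism fixing $\LieG_-$ and $\LieG_+$ pointwise, so it fixes every bracket $[X_-, X_+]$ with $X_- \in \LieG_-$, $X_+ \in \LieG_+$, hence fixes $[\LieG_-, \LieG_+] = \LieG_0$ pointwise; therefore $\Ad(g)$ is the identity on all of $\LieG$, and $g = e$.

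The main obstacle is the identity $[\LieG_-, \LieG_+] = \LieG_0$: it is the only step that uses more than formal manipulation, and it is exactly where it matters that each simple factor of $\LieG$ is nontrivially graded --- a simple factor contained in $\LieP$ would sit inside $G_0$ and act trivially on $\LieG/\LieP$. Everything else --- the module identification, the self-duality $\LieG_+ \cong \LieG_-^*$, and the implication $\Ad(g) = \operatorname{id} \Rightarrow g = e$ --- is immediate.
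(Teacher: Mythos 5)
Your overall strategy coincides with the paper's: use the Killing-form duality \(\LieG_-^*=\LieG_+\) to promote triviality on \(\LieG/\LieP\cong\LieG_-\) to triviality on \(\LieG_+\), show that \(\LieG_-\) and \(\LieG_+\) together force \(\Ad(g)=\operatorname{id}\) on all of \(\LieG\), and then invoke adjoint form. The divergence, and the gap, is in the generation step. The lemma is stated for an \emph{arbitrary} parabolic subgroup \(P\subset G\), but your proof of \([\LieG_-,\LieG_+]=\LieG_0\) uses both that \(\LieG_\pm\) are abelian and that \([\LieG_-,\LieG_+]\subset\LieG_0\), i.e. that the grading is concentrated in degrees \(-1,0,1\); these hold only in the cominiscule case. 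For a general parabolic --- already for a Borel subgroup of \(\SL{3,\C{}}\) --- \(\LieG_\pm\) are not abelian and \([\LieG_-,\LieG_+]\) contains noncompact root spaces, so the subspace \(\LieG^i_-\oplus\left[\LieG^i_-,\LieG^i_+\right]\oplus\LieG^i_+\) is not an ideal (nor is the sum compatible with the decomposition you compare it against), and your argument as written does not apply in the stated generality. (Your other implicit hypothesis, that every simple factor of \(\LieG\) contains a noncompact root, is the effectiveness of the \(G\)-action on \(G/P\); the paper's proof uses exactly the same assumption, so that is not a point of difference, and you flag it correctly.)

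The repair is small and lands you essentially on the paper's proof. Instead of the bracket \([\LieG_-,\LieG_+]\), take the Lie subalgebra \(\mathfrak{k}\subset\LieG\) generated by \(\LieG_-\cup\LieG_+\). Since \(\left[\LieG_0,\LieG_\pm\right]\subset\LieG_\pm\), the generating set is \(\ad\pr{\LieG_0}\)-invariant, so \(\mathfrak{k}\) is an ideal of \(\LieG\); by effectiveness it meets every simple factor nontrivially, so \(\mathfrak{k}=\LieG\); and an automorphism \(\Ad(g)\) fixing the generators pointwise fixes \(\mathfrak{k}=\LieG\) pointwise, whence \(g\) is central and therefore trivial. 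This works for any parabolic. The paper carries out the same idea more explicitly, identifying \(\mathfrak{k}\) with \(\LieG\) by a root-space and Chevalley-basis computation (noncompact root spaces give their coroots, coroots give the compact root spaces component by component along the Dynkin diagram) rather than by the ideal argument. In the cominiscule case your sharper identity \([\LieG_-,\LieG_+]=\LieG_0\) is true --- it is essentially the content of the paper's next lemma on the image of the fundamental tensor --- so your proof is adequate for every use the paper makes of this lemma; it just does not cover the generality in which the lemma is stated.
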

\begin{proof}
Elements of \(G\) from different simple factors commute, so we can restrict
our attention to the case where \(G\) is a simple Lie group. 
Take an element \(g_0 \in G_0\) acting trivially on \(\LieG/\LieP=\LieG_-\).
By invariance of the Killing form, \(g_0\) acts trivially on \(\LieG_+\) too, and so  acts trivially on the Lie subalgebra \(\LieK\) of \(G\) generated by \(\LieG_- \cup \LieG_+\).
So \(\LieK\) contains the root spaces of the noncompact roots.
Since both \(\LieG_-\) and \(\LieG_+\) are invariant under reflection in the compact roots, so is \(\LieK\).
By the relation 
\(
\left[X_{\alpha^-},X_{\alpha^+}\right] = H_{\alpha^-},
\)
\(\LieK\) contains the coroots of the noncompact roots.
By the relation
\(
\left[H_{\alpha^-},X_{\gamma}\right] = \gamma\pr{H_{\alpha^-}} X_{\gamma},
\)
\(\LieK\) contains the root spaces of all compact roots \(\gamma\) so that \(\gamma\) is not perpendicular to some noncompact root \(\alpha^-\).
Applying the same equation inductively, \(\LieK\) contains the root space of each simple root \(\gamma\) which lies in the same component of the Dynkin diagram as some noncompact root.
Since \(G\) acts effectively on \(X=G/P\), every component of the Dynkin diagram of \(G\) contains a noncompact root.
Therefore \(\LieK=\LieG\).
So \(g_0\) acts trivially on \(\LieG\), i.e. lies in the center of \(G\), which is \(\left\{1\right\}\) by hypothesis.
\end{proof}

\begin{lemma}
If \(\tau\) is the fundamental tensor of a cominiscule variety \(G/P\) then
\(
\tau \in 
\LieG_-^* \otimes \LieG_- \otimes \LieG_0.
\)
As we vary \(X_-\) and \(\xi^-\), the values of \(t\left(X_-,\xi^-\right)\) span \(\LieG_0\).
\end{lemma}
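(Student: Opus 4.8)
The plan is to read the conclusion off from the three–term grading \(\LieG=\LieG_-\oplus\LieG_0\oplus\LieG_+\) of a cominiscule parabolic, using the previous lemma to collapse the last two slots of \(\tau\) (which a priori record an endomorphism of \(\LieG_-\)) into a single copy of \(\LieG_0\).

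First I would record what the grading section already supplies: because \(\LieG_-\) and \(\LieG_+\) are abelian, the three–term decomposition really is a grading, so \([\LieG_-,\LieG_+]\subseteq\LieG_0\) and \([\LieG_0,\LieG_-]\subseteq\LieG_-\). Hence, for \(X_-\in\LieG_-\) and \(Y_+\in\LieG_+\) the Killing-dual of \(\xi^-\), the bracket \([X_-,Y_+]\) lies in \(\LieG_0\), and the defining formula \(\tau\pr{X_-,\xi^-}=\ad[X_-,Y_+]|_{\LieG_-}\) exhibits \(\tau\pr{X_-,\xi^-}\) as the image of \([X_-,Y_+]\in\LieG_0\) under the representation \(\rho\colon\LieG_0\to\LieG_-^*\otimes\LieG_-\), \(Z\mapsto\ad Z|_{\LieG_-}\). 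The previous lemma says \(G_0\) acts faithfully on \(\LieG/\LieP=\LieG_-\); since \(G_0\) is connected, the kernel of \(G_0\to\GL{\LieG_-}\) is trivial, hence its Lie algebra \(\ker\rho\) is trivial, i.e. \(\rho\) is injective. So \(\tau\pr{X_-,\xi^-}\) may be, and is, canonically identified with \([X_-,Y_+]\in\LieG_0\), which is exactly the assertion \(\tau\in\LieG_-^*\otimes\LieG_-\otimes\LieG_0\).

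For the spanning statement, as \(\xi^-\) runs over \(\LieG_-^*\) its Killing-dual \(Y_+\) runs over all of \(\LieG_+\), so the values \(\tau\pr{X_-,\xi^-}\) span \([\LieG_-,\LieG_+]\); it remains to prove \([\LieG_-,\LieG_+]=\LieG_0\). Since the Killing form is nondegenerate on \(\LieG_0\) and \([\LieG_-,\LieG_+]\subseteq\LieG_0\), it is enough to see that every \(Z\in\LieG_0\) orthogonal to \([\LieG_-,\LieG_+]\) vanishes. For such \(Z\), invariance of the Killing form gives \(\left<[Z,X_-],Y_+\right>=\left<Z,[X_-,Y_+]\right>=0\) for all \(X_-\in\LieG_-\), \(Y_+\in\LieG_+\); as \([Z,X_-]\in\LieG_-\) and the Killing form pairs \(\LieG_-\) nondegenerately with \(\LieG_+\), this forces \([Z,X_-]=0\) for every \(X_-\), i.e. \(Z\) acts trivially on \(\LieG_-\), and injectivity of \(\rho\) gives \(Z=0\).

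The real content is the identity \([\LieG_-,\LieG_+]=\LieG_0\), which the Killing-form argument dispatches; the only place that needs a word of care is the passage from the group-level faithfulness of the previous lemma to injectivity of \(\rho\) at the Lie-algebra level, and when \(G\) is not simple there is nothing extra to do, since both the grading and the previous lemma are stated for an arbitrary parabolic in a semisimple group.
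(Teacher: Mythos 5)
Your proof is correct, but it reaches the spanning statement by a genuinely different route than the paper. The paper reduces to \(G\) simple, lets \(V \subset \LieG_0\) be the span of the brackets \(\left[X_-,Y_+\right]\), shows via the Jacobi identity that \(V\) is an ideal of \(\LieG_0\), uses a Chevalley basis to see that \(V\) contains every noncompact coroot \(H_{\alpha^+}=\left[X_{\alpha^+},X_{\alpha^-}\right]\), and then concludes \(V=\LieG_0\) from the structure of \(\LieG_0\) (abelian summand plus the simple summands read off the Dynkin diagram). You instead run a duality argument: the Killing form is nondegenerate on \(\LieG_0\) and pairs \(\LieG_-\) with \(\LieG_+\), so any \(Z \in \LieG_0\) Killing-orthogonal to \(\left[\LieG_-,\LieG_+\right]\) satisfies \(\left[Z,\LieG_-\right]=0\), hence vanishes by faithfulness of the \(\LieG_0\)-action on \(\LieG/\LieP\); your passage from the group-level faithfulness of the previous lemma to injectivity of \(\LieG_0 \to \LieGL{\LieG_-}\) via connectedness is valid. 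Your route avoids the reduction to the simple case, the Chevalley basis and the Dynkin-diagram bookkeeping, at the price of invoking two facts the paper leaves implicit: nondegeneracy of the Killing form restricted to \(\LieG_0\) (standard, since the grading makes \(\LieG_0\) Killing-orthogonal to \(\LieG_-\oplus\LieG_+\)) and the previous lemma itself (which the paper's spanning argument does not use, though it is needed anyway to regard \(\LieG_0\) as embedded in \(\LieG_-^* \otimes \LieG_-\)). One small slip of wording: the relations \(\left[\LieG_-,\LieG_+\right]\subset\LieG_0\) and \(\left[\LieG_0,\LieG_{\pm}\right]\subset\LieG_{\pm}\) do not follow merely from \(\LieG_-\) and \(\LieG_+\) being abelian; they are part of the standard \(1\)-grading of a cominiscule parabolic (the coefficient of the noncompact simple root being \(0\) or \(\pm 1\)), which the paper's grading subsection assumes, so nothing essential is lost.
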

\begin{proof}
It suffices to prove the result under the assumption that \(G/P\) is irreducible, i.e. \(G\) is simple.
Let \(V \subset \LieG_0\) be the span of all vectors in \(\LieG_0\) of the form \(\left[X_-,Y_+\right]\).
We have only to prove that \(V=\LieG_0\). For any \(Z_0 \in \LieG_0\), 
\[
\left[Z_0,\left[X_-,Y_+\right]\right]
=
\left[\left[Z_0,X_-\right],Y_+\right]
+
\left[X_-,\left[Z_0,Y_+\right]\right]
\]
lies in \(V\). 
So \(V\) is an ideal in \(\LieG_0\). 

If we take the Dynkin diagram of \(\LieG\) and cut out the compact simple roots, we get the Dynkin diagram of \(\LieG_0\).
The connected components in the Dynkin diagram of \(\LieG_0\) are the Dynkin diagrams of simple subalgebras of \(\LieG_0\).
The Lie algebra \(\LieG_0\) also contains all of the noncompact coroots of \(\LieG\), which sum to an abelian subalgebra.
Moreover, \(\LieG_0\) is the sum of this abelian subalgebra with those simple subalgebras.
Take a Chevalley basis. 
For each root \(\alpha^+\), 
\(
H_{\alpha^+} 
= 
\left[X_{\alpha^+},X_{\alpha^-}\right] \in V.
\)
So \(V\) is an ideal in \(\LieG_0\) containing the Cartan subalgebra, and therefore containing the abelian summand of \(\LieG_0\) and intersecting all of the simple summands.
Since \(V\) intersects each simple summand of \(\LieG_0\), and is an ideal, it must contain each simple summand, and contains the abelian summand, so \(V=\LieG_0\).
\end{proof}

Suppose that \(G/P\) is a cominiscule variety.
Let \(\Gamma\) be the group of automorphisms of the Dynkin diagram of \(G/P\), i.e. automorphisms of the Dynkin diagram of \(G\) which leave each component invariant and which fix every noncompact root.
With a Cartan subalgebra of \(G\) chosen, we can canonically realize \(\Gamma\) as a subgroup
of \(G\) leaving the Cartan subalgebra invariant; see Fulton and Harris \cite{Fulton/Harris:1991} p. 498.
To be specific, we just take a Chevalley basis and move around the labels on the simple roots according to \(\Gamma\).
Since \(\Gamma\) acts trivially on the noncompact root, it acts as isomorphisms of the compact roots, and as isomorphisms of the Lie groups \(G,G_-,G_0\) and \(G_+\).

\begin{example}\label{example:Grassmannian}
Among the Dynkin diagrams of Grassmannians, only those of the form \(\Gr{p}{\C{2p}}\) have a symmetry in their Dynkin diagram, interchanging the ends, fixing the noncompact root at the middle.
The Dynkin diagram of the maximal semisimple subgroup of \(G_0\) is precisely the Dynkin diagram of \(G/P\) with the noncompact root removed. 
So in this case, \(G_0\) has disconnected Dynkin diagram, and so \(\LieG_0\) is a direct sum \(\C{} \oplus \LieSL{p,\C{}} \oplus \LieSL{p,\C{}}\).
The group \(\Gamma=\Z{}_2\) interchanges these two subalgebras, while adjoint \(G_0\)-action leaves them each invariant.
The Lie algebra \(\LieG_0\) imbeds as a subalgebra of \(\GL{\LieG/\LieP} = \GL{p^2,\C{}}\).
Therefore the projection operator
\(
\alpha \in \LieGL{p^2,\C{}}^* \otimes \LieGL{p^2,\C{}}
\)
which projects onto the first of these \(\LieSL{p,\C{}}\) subalgebras is invariant under \(G_0\), but not invariant under \(\Gamma\).
To be explicit, write each linear map \(s \colon \LieGL{p,\C{}} \to \LieGL{p,\C{}}\) as \(s(A)=s^{i\ell}_{jk} A^k_{\ell}\), and then
\(
\alpha(s)=t
\)
where
\(
t^{i\ell}_{jk}
=
\frac{1}{n}
\pr{s^{ip}_{pk} - s^{pq}_{pq}\delta^i_k}\delta^{\ell}_j.
\)
\end{example}

\begin{example}\label{example:Hyperquadric}
Quadric hypersurfaces \(Q^{2n-2} \subset \Proj{2n-1}\) have an automorphism from their Dynkin diagram, interchanging the upper and lower roots on the right hand side of the diagram.
The groups are
\[
G=\PO{2n,\C{}}, G_0=\C{\times}\SO{2n-2,\C{}}, \LieG_-=\LieG/\LieP=\C{2n-2}.
\]
The group \(G_0\) acts in the obvious representation on \(\LieG_-\).
This automorphism swaps the two representations in the splitting
\[
\Lm{n-1}{\LieG/\LieP}^* = \Lm{n-1}{\C{2n-2}}^* = \Lambda^+ \oplus \Lambda^-
\]
into self-dual and anti-self-dual \(n\)-forms.
So the group \(\Gamma\) of automorphisms of the Dynkin diagram will not preserve the \(\Lambda^+\) component. 
Let \(\alpha \in \Lm{n-1}{\C{2n-2}} \otimes \Lm{n-1}{\C{2n-2}}^*\) be the projection to \(\Lambda^+\). 
So \(\alpha\) is invariant under \(G_0\) but not under \(\Gamma\). 
Since \(\Lm{n-1}{\LieG_-} \subset \bigotimes^{n-1} \LieG_-\) we think of \(\alpha\) as an element of 
\[
\left(\bigotimes^n \LieG_-\right) \otimes \left(\bigotimes^n \LieG_-^*\right)
=
\bigotimes^n \left( \LieG_- \otimes \LieG_-^* \right).
\]
\end{example}

Suppose that \(G/P\) is an irreducible cominiscule variety. 
If \(G=A_{2p-1}=\PSL{2p,\C{}}\) and \(G/P=\Gr{p}{2p}\), then let \(\alpha\) be the tensor defined in example~\vref{example:Grassmannian}.
If instead \(G=D_n=\SO{2n,\C{}}\) so that \(G/P=Q^{2n-2} \subset \Proj{2n-1}\), then let \(\alpha\) be the tensor defined in example~\vref{example:Hyperquadric}.
Otherwise let \(\alpha=0\).

Suppose that \(G/P\) is a cominiscule variety, factoring into
irreducibles as
\[
G/P = \prod_i G^i/P^i.
\]
Define the \emph{barnacle tensor} to be the tensor that is the formal sum of the various tensors \(\alpha\) on each factor.

\begin{lemma}\label{lemma:IdentityComponent}
Suppose that \(G/P\) is a cominiscule variety.
The kernel of the obvious morphism \(\rho \colon P \to \GL{\LieG/\LieP}\) is \(G_+\) and the image is \(G_0\). 
The group \(G_0\) is precisely the set of linear transformations of \(\LieG/\LieP\) which stabilize the fundamental tensor and the barnacle tensor and preserve the splitting 
\[
\LieG/\LieP = \bigoplus_i
\LieG^i/\LieP^i.
\]
\end{lemma}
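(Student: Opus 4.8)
The plan is to prove the three assertions in order. For the kernel, equation~\eqref{equation:AdexpX} shows that $\Ad\pr{e^{X_+}}Y \equiv Y \pmod{\LieP}$ for every $X_+ \in \LieG_+$ and $Y \in \LieG$, so the connected subgroup $G_+$ lies in $\ker\rho$; conversely, writing $p \in \ker\rho$ as $p = g_0 g_+$ via the Langlands decomposition $P = G_0 G_+$, the factor $g_0$ also lies in $\ker\rho$, and since the adjoint action of $G_0$ on $\LieG/\LieP$ is faithful (proved above), $g_0 = 1$ and $p = g_+ \in G_+$. Hence $\ker\rho = G_+$ and $\rho(P) = \rho(G_0)$, an isomorphic copy of $G_0$, which we henceforth identify with its image in $\GL{\LieG/\LieP}$.

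Let $\mathcal{S} \subseteq \GL{\LieG/\LieP}$ denote the subgroup fixing the fundamental tensor $\tau$, fixing the barnacle tensor, and preserving the splitting $\LieG/\LieP = \bigoplus_i \LieG^i/\LieP^i$. The inclusion $G_0 \subseteq \mathcal{S}$ is immediate: $\tau$ is assembled $G$-equivariantly from the Lie bracket and the Killing form; the barnacle tensor is by definition the sum over the irreducible factors of the tensors $\alpha$ of examples~\ref{example:Grassmannian} and~\ref{example:Hyperquadric}, each of which is explicitly $G_0$-invariant; and $G_0 = \prod_i G_0^i$ with $G_0^i \subseteq G^i$ manifestly preserves the splitting. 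For the reverse inclusion $\mathcal{S} \subseteq G_0$ I would first reduce to the case where $G/P$ is irreducible: an element $g \in \mathcal{S}$ preserves the splitting, so $g = \prod_i g^i$; the cross-factor components of $\tau$ vanish because $[\LieG^i, \LieG^j] = 0$ for $i \ne j$, so both $\tau$ and the barnacle tensor are direct sums over $i$ of the corresponding tensors of the factors $G^i/P^i$, and $g$ fixes them precisely when each $g^i$ fixes the fundamental and barnacle tensors of $G^i/P^i$. It therefore suffices to show, for $G/P$ irreducible, that a linear automorphism of $\LieG_-$ fixing $\tau$ and $\alpha$ lies in $G_0$.

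For irreducible $G/P$ I would follow Kobayashi and Ochiai and work through the classification table. The goal is to show that $\operatorname{Aut}\pr{\LieG_-, \tau}$ has identity component equal to $G_0$ and component group equal to the group $\Gamma$ of Dynkin-diagram symmetries, realized inside $\GL{\LieG_-}$ as permutations of root spaces. The identity-component claim amounts, at the Lie algebra level, to the assertion that $\LieG_0$ is the full annihilator of $\tau$ inside $\LieGL{\LieG_-}$: this is automatic for $\Proj{n}$, where $G_0 = \GL{\LieG_-}$ already, and for each of the remaining entries of the table it is a short representation-theoretic verification using the explicit form of $\tau$. Inspecting the table, $\Gamma$ is trivial except for the Grassmannians $\Gr{p}{2p}$ and the even-dimensional quadrics $Q^{2n-2}$, where $\Gamma = \Z{}_2$, generated respectively by transpose and by an orthogonal transformation of determinant $-1$; in exactly these two cases the nontrivial element of $\Gamma$ fails to preserve $\alpha$ --- which is precisely why $\alpha$ was introduced in examples~\ref{example:Grassmannian} and~\ref{example:Hyperquadric} --- so $\operatorname{Aut}\pr{\LieG_-, \tau, \alpha} = G_0$ in every case.

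I expect the main obstacle to be this last, case-by-case computation: pinning down the precise shape of the fundamental tensor for each irreducible cominiscule variety and computing its full linear stabilizer --- essentially the content of the Kobayashi--Ochiai papers --- together with the bookkeeping that the Dynkin symmetries $\Gamma$ are nontrivial exactly in the two ``barnacle'' cases and are broken there by the barnacle tensor.
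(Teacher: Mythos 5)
Your treatment of the kernel and image (via equation~\eqref{equation:AdexpX}, the Langlands decomposition and the faithfulness lemma), the easy inclusion \(G_0 \subseteq \mathcal{S}\), and the reduction to an irreducible factor all match the paper and are fine. The problem is that the heart of the lemma is never actually proved. Your proposal reduces everything to the claims that (a) \(\LieG_0\) is the full stabilizer of \(\tau\) inside \(\LieGL{\LieG_-}\), and (b) the component group of the stabilizer of \(\tau\) is realized by Dynkin-diagram symmetries, and you then defer both to an unexecuted case-by-case check through the classification table (``a short representation-theoretic verification using the explicit form of \(\tau\)''), which you yourself flag as the main obstacle. As it stands this is a statement of the goal, not an argument: for the quadrics, the Lagrangian Grassmannians, the spinor varieties and especially the two exceptional models, pinning down \(\tau\) explicitly and computing its full linear stabilizer (as a group, not just its Lie algebra, since you also need to control the components) is exactly the nontrivial content of the Kobayashi--Ochiai papers, and nothing in the proposal carries it out. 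Claim (b) in particular is simply asserted; without it you cannot conclude that the only automorphisms beyond \(G_0\) are the two order-two diagram symmetries that the barnacle tensor is designed to kill.

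The paper avoids this case analysis almost entirely, and you should be aware of the mechanism, because it is what makes the lemma provable without redoing Kobayashi--Ochiai. Since the values \(\tau\pr{X_-,\xi^-}\) span \(\LieG_0 \subset \LieGL{\LieG_-}\) (the earlier lemma), any linear transformation stabilizing \(\tau\) normalizes \(\LieG_0\), hence normalizes the connected group \(G_0\). One then reconstructs the Lie algebra \(\LieG\) as \(\LieG_- \oplus \LieG_0 \oplus \LieG_-^*\) with the bracket \(\left[X_-,\xi^-\right]=\tau\pr{X_-,\xi^-}\), so the stabilizer acts by automorphisms of \(G\) preserving \(G_-,G_0,G_+,P\). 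Because \(\Aut{G}\) is a finite extension of \(G\) by diagram automorphisms, a root-space argument (the Cartan subalgebra lies in the stabilizer's Lie algebra, noncompact root spaces do not preserve \(\LieG_\pm\), and \(\LieG_0\) is an ideal in the stabilizer's Lie algebra) shows the identity component is exactly \(G_0\); the classification enters only at the last step, to observe that the only irreducible cominiscule diagrams with a nontrivial symmetry fixing the noncompact node are \(\Gr{p}{\C{2p}}\) and \(Q^{2n-2}\), where the symmetry fails to preserve the barnacle tensor. If you want to keep your route, you must either actually perform the stabilizer computations model by model (including the exceptional cases and the component groups), or import this uniform normalizer-plus-reconstruction argument to replace them.
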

Note that the splitting can be defined by a collection of complex linear projection operators.
\begin{proof}
Since \(G\) and \(G_0\) preserve the splitting, we can safely assume that \(G\) is a simple Lie group. 
Let \(G_0'\) be the group of of linear transformations of \(\LieG/\LieP\) which stabilize the fundamental tensor and the barnacle tensor and preserve the splitting.
Since \(G_0'\) is by definition a linear algebraic group, it has finitely many components.
The group \(G_+\) lies in the kernel of \(P \to G_0'\).

Suppose that \(r \in G_0'\).
The image of the fundamental tensor is the Lie algebra \(\LieG_0 \subset \LieGL{\LieG/\LieP}\).
Therefore \(\Ad_r \LieG_0 = \LieG_0\) inside \(\LieGL{\LieG/\LieP}\). 
Since \(G_0\) is connected, \(\Ad_r G_0=G_0\) inside \(\GL{\LieG/\LieP}\),
i.e. \(G_0 \subset G_0'\) is a normal subgroup.

Define a Lie algebra
\(
\LieG'' =  \LieG_- \oplus \LieG_0 \oplus \LieG_-^*,
\)
by using the usual brackets on \(\LieG_0\), and by making  \(\LieG_-\) and \(\LieG_-^*\) abelian (so \(0\) brackets), and by making the brackets of \(\LieG_0\) on \(\LieG_-\) and on \(\LieG_-^*\) be the obvious action as linear transformations. 
Then finally define \(\left[X_-,\xi^-\right]=t\left(X_-,\xi^-\right)\) for \(X_- \in \LieG_-\) and \(\xi^- \in \LieG_-^*\). 
This bracket is \(G_0'\)-invariant, and \(\LieG''=\LieG\) are isomorphic Lie algebras. 
Moreover, \(\LieP \subset \LieG\) is an \(G_0'\)-invariant subalgebra. 

So \(G_0'\) acts as automorphisms of the connected Lie group \(G\) preserving the connected subgroups \(G_0,G_+,G_-,P\), say by a morphism \(\sigma \colon G_0' \to \Aut{G}\).
This morphism \(\sigma\) fixes \(G_0\) to give the usual morphism \(\Ad \colon G_0 \to G \subset \Aut{G}\).
The automorphism group \(\Aut G\) is a finite extension of \(\Ad G\) (where \(\Ad G\) acts on \(G\) by inner automorphisms), since \(G\) is semisimple (see Fulton and Harris \cite{Fulton/Harris:1991} p. 498).
Since \(G\) acts effectively on \(X=G/P\), \(G = \Ad G\), so \(\Aut G\) is a finite extension of \(G\).
The morphism \(\sigma\) takes \(\LieG_0 \subset \LieG'_0 \to \LieG_0\), so takes \(G_0 \subset G'_0 \to G_0 \subset G\), since \(G_0\) is connected, so \(G_0\) is embedded as a subgroup of \(\GL{\LieG/\LieP}\).

The inclusion map \(G_0' \to \Aut{G}\) gives an inclusion \(\LieG_0' \to \LieG\) on Lie algebras, extending the inclusion \(\LieG_0 \to \LieG\). 
The Cartan subalgebra of \(\LieG\) is a subalgebra of \(\LieG_0'\), so \(\LieG_0'\) is a sum
of root spaces. 
The noncompact root spaces don't preserve the subspaces \(\LieG_-, \LieG_+ \subset \LieG\), so none of these root spaces lie in \(\LieG_0'\).
Since \(\LieG_0\) is an ideal of \(\LieG_0'\), the compact root spaces lie inside \(\LieG_0'\), so \(\LieG_0'=\LieG_0\).
In particular \(G_0\) is the identity component of \(G_0'\). 

Every automorphism of a complex semisimple Lie group \(G\) factors uniquely as \(kg\) where \(g \in G\) and \(k\) is an automorphism of the Dynkin diagram; see Fulton and Harris \cite{Fulton/Harris:1991} p. 498.
So each element of \(G_0'\) must factor as \(kg\) where \(k\) lies in the automorphism group of the Dynkin diagram of \(G\), and \(g \in G\). 
This same element must act as an automorphism of \(G_0\). 

The group of inner automorphisms of a complex affine group \(G_0\) acts transitively on the Cartan subgroups; Borel \cite{Borel:1991} p. 156. 
Therefore if we pick any element \(g_0' \in G_0'\), we can arrange after multiplication with
some \(g_0 \in G_0\) that \(g_0'\) acts on \(G_0\) preserving its Cartan subgroup, which is also the Cartan subgroup of \(G\).
The element \(g_0'\) therefore acts as an automorphism of the Dynkin diagram of \(G\),
leaving invariant the roots whose root spaces belong to \(\LieG_0\), i.e. as an automorphism
of the Dynkin diagram of \(G/P\). 
Examining the Dynkin diagrams, there are only two cases of irreducible cominiscule varieties whose Dynkin diagrams have nontrivial automorphism groups: for \(A_{2p-1}\), the Grassmannian \(\Gr{p}{\C{2p}}\), and \(D_n\), the quadric \(Q^{2n-2}\). 
In each case the automorphism has order 2, and does not preserve the barnacle tensor. Therefore \(G_0'=G_0\).
\end{proof}

The group of linear transformations of \(\LieG/\LieP\) preserving the fundamental tensor and splitting is already a finite extension of \(G_0\). 
The barnacle tensor is defined here in order to get rid of the order two automorphisms of the Grassmannians \(\Gr{p}{\C{2p}}\) and the quadric hypersurfaces.

\subsection{The curvature boundary operator}

Suppose that \(G/P\) is a cominiscule variety.
Following Calderbank and Diemer \cite{Calderbank/Diemer:2001}, given any representation \(\rho \colon P \to \GL{W}\), we define an associated space of \emph{chains}
\(
C_k\left(W\right) = \Lm{k}{\LieG_-}^* \otimes W,
\)
and a linear map which we call the \emph{boundary operator}
\(
\delta \colon C_k\left(W\right) \to C_{k-1}\left(W\right),
\)
by
\[
\delta\left(\beta \otimes w\right)
=
\sum_{\alpha_-}
\left(X_{\alpha_-} \hook \beta\right)
\otimes \rho\left(X_{\alpha_+}\right)w,
\]
for \(\beta \in \Lm{k}{\LieG_-}^*\) and \(w \in W\).

\subsection{The torsion operator}

Define the \emph{torsion operator}
\[
\torop \colon
\LieG_-^* \otimes 
\LieG_0
\to \Lm{2}{\LieG_-}^* \otimes \LieG_-,
\]
as follows: for any \(a \in \LieG_-^* \otimes \LieGL{\LieG_-}\),
and any \(X,Y \in \LieG_-\), let
\[
\torop(a)(X \wedge Y)=\left[a(X),Y\right]-\left[a(Y),X\right].
\]

\begin{lemma}\label{lemma:GplusActsFaithfullyonGminus}
Suppose that \(G/P\) is a cominiscule variety, with grading
\(
\LieG = \LieG_- \oplus \LieG_0 \oplus \LieG_+.
\)
The morphism 
\(
\ad \colon \LieG_+ \to \LieG_-^* \otimes \LieG_0,
X_+ \mapsto \ad\left(X_+\right)
\)
is injective.
\end{lemma}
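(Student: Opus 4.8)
The plan is to show that if $X_+ \in \LieG_+$ satisfies $\ad(X_+) = 0$ as a map $\LieG_- \to \LieG_0$, then $X_+ = 0$. First I would observe that $\LieG_+$ is abelian (stated in the excerpt, following Knapp), so $\ad(X_+)$ automatically kills $\LieG_+$; and $[X_+,\LieG_0] \subset \LieG_+$, so if in addition $\ad(X_+)$ kills $\LieG_-$, then $\ad(X_+)$ maps all of $\LieG = \LieG_- \oplus \LieG_0 \oplus \LieG_+$ into $\LieG_+$, with $\LieG_-$ and $\LieG_+$ in its kernel. In particular $\ad(X_+)^2 = 0$, so $\ad(X_+)$ is nilpotent; but I want the stronger conclusion $X_+ = 0$.

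The cleanest route is via the Killing form. Using the identification $\LieG_-^* = \LieG_+$ from the grading, the Killing form $B$ pairs $\LieG_-$ with $\LieG_+$ nondegenerately and is nondegenerate on $\LieG_0$ as well (these are the pieces of a $|1|$-grading, i.e. $\LieG_{\pm}$ are the $\pm 1$ eigenspaces of the grading element, and $B$ respects the grading: $B(\LieG_i,\LieG_j) = 0$ unless $i+j = 0$). Now suppose $\ad(X_+)|_{\LieG_-} = 0$. I want to conclude $B(X_+, Y_-) = 0$ for all $Y_- \in \LieG_-$, which by nondegeneracy forces $X_+ = 0$. Pick the grading element $E \in \LieG_0$ (so $[E, Z] = Z$ for $Z \in \LieG_+$, $[E,Z]=0$ for $Z \in \LieG_0$, $[E,Z]=-Z$ for $Z \in \LieG_-$). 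Then for any $Y_- \in \LieG_-$, invariance of the Killing form gives
\[
B(X_+, Y_-) = B\bigl([E, X_+], Y_-\bigr) = -B\bigl(X_+, [E, Y_-]\bigr) = B(X_+, Y_-),
\]
which is unhelpful directly; instead I use $[X_+, E] = -X_+$ and associativity differently: $B(X_+, Y_-) = -B([E,X_+],Y_-)$... the sign bookkeeping here is the one routine point to get right. The substantive input is: write $X_+ = \sum_\beta c_\beta X_{\beta^+}$ in a Chevalley basis of noncompact positive root spaces. For each noncompact positive root $\beta^+$ we have $[X_{\beta^+}, X_{\beta^-}] = H_{\beta^+} \in \LieG_0$ (a nonzero coroot), so if $\ad(X_+)$ annihilates all of $\LieG_-$ then in particular $\sum_\beta c_\beta [X_{\beta^+}, X_{\beta^-}] = 0$ for each fixed noncompact negative root $\beta^-$; since $[X_{\gamma^+}, X_{\beta^-}]$ lands in distinct root spaces of $\LieG_0$ for $\gamma^+ \ne \beta^+$ (or in the Cartan when $\gamma^+ = \beta^+$), comparing the Cartan component isolates $c_\beta H_{\beta^+} = 0$, hence $c_\beta = 0$. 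Doing this for every noncompact positive root $\beta^+$ gives $X_+ = 0$.

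I expect the main obstacle to be the bookkeeping in the last step: making sure that when $\gamma^+ \neq \beta^+$ are both noncompact positive roots, the bracket $[X_{\gamma^+}, X_{\beta^-}]$ (if nonzero) lies in the root space of $\gamma^+ - \beta^+$, which is a \emph{nonzero} root and therefore contributes nothing to the Cartan subalgebra, so that the coefficient of each $H_{\beta^+}$ in $\sum_\beta c_\beta [X_{\beta^+}, X_{\beta^-}]$ is exactly $c_\beta$ (up to the nonzero structure constants), forcing $c_\beta = 0$. One must also note $\gamma^+ - \beta^+$ is never zero since $\gamma^+ \neq \beta^+$, and handle the cominiscule (actually $|1|$-graded) feature that $\gamma^+ + \beta^-$ is a difference of two noncompact positive roots, hence has grading $0$, consistent with landing in $\LieG_0$. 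Alternatively, and perhaps more slickly, one can avoid Chevalley bases entirely: $\ad(X_+)|_{\LieG_-} = 0$ together with $[\LieG_+,\LieG_+]=0$ shows $\ad(X_+)$ is $\ad(\LieG_-)$-equivariant in a suitable sense and kills a generating set... but I think the explicit root computation is the most transparent, and the $|1|$-grading ($\LieG_\pm$ abelian, $[\LieG_-,\LieG_+] \subset \LieG_0$) is exactly what makes everything fit.
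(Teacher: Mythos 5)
Your Chevalley-basis argument is correct and rests on the same key fact as the paper's own proof: \([X_{\beta^+},X_{\beta^-}]=H_{\beta^+}\ne 0\), so comparing Cartan components shows no nonzero element of \(\LieG_+\) can annihilate all of \(\LieG_-\). The paper reaches the reduction to root vectors slightly more slickly by observing that the kernel is a \(\LieG_0\)-module and hence a sum of root spaces, while you expand in a basis and isolate the Cartan component directly (correctly noting the cross terms \([X_{\gamma^+},X_{\beta^-}]\) lie in nonzero root spaces); your abandoned Killing-form detour does no harm since the completed argument does not rely on it.
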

\begin{proof}
Suppose that \(K_+\) is the kernel of this morphism.
It follows from the Leibnitz identity that \(K_+\) is a \(\LieG_0\)-module. 
Therefore \(K_+\) is a sum of root spaces of \(\LieG\).
So it is enough to prove that for any root \(\alpha^+\), \(X_{\alpha^+}\) is not in \(K_+\), which follows from
\(
\left[X_{\alpha^+}, X_{\alpha^{-}}\right]
=
H_{\alpha^+} \ne 0.
\)
\end{proof}

\begin{lemma}
The map \(\ad \colon \LieG_+ \to \LieG_-^* \otimes \LieG_0\) has image inside \(\ker \torop\).
\end{lemma}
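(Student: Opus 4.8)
The plan is to unwind the definition of \(\torop\) applied to \(a=\ad(X_+)\) and read off the conclusion from the Jacobi identity, using that \(\LieG_-\) is abelian. Recall that since \(G/P\) is cominiscule the grading \(\LieG=\LieG_-\oplus\LieG_0\oplus\LieG_+\) is a \(3\)-grading: \([\LieG_+,\LieG_-]\subseteq\LieG_0\), and (as recorded above) \(G_-\) is abelian, so \([\LieG_-,\LieG_-]=0\). Thus for \(X_+\in\LieG_+\) the element \(a=\ad(X_+)\), restricted to \(\LieG_-\), indeed sends \(\LieG_-\) into \(\LieG_0\subset\LieGL{\LieG_-}\) (this is already used in stating lemma~\ref{lemma:GplusActsFaithfullyonGminus}), so \(\torop(a)\) is defined, with \(\torop(a)(X\wedge Y)=\bigl[[X_+,X],Y\bigr]-\bigl[[X_+,Y],X\bigr]\) for \(X,Y\in\LieG_-\).

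Next I would apply the Jacobi identity to the triple \(X_+,X,Y\):
\[
\bigl[[X_+,X],Y\bigr]+\bigl[[X,Y],X_+\bigr]+\bigl[[Y,X_+],X\bigr]=0.
\]
The middle term vanishes because \(X,Y\in\LieG_-\) and \(\LieG_-\) is abelian, and \(\bigl[[Y,X_+],X\bigr]=-\bigl[[X_+,Y],X\bigr]\); rearranging the remaining two terms gives \(\bigl[[X_+,X],Y\bigr]=\bigl[[X_+,Y],X\bigr]\), i.e. \(\torop(\ad(X_+))(X\wedge Y)=0\). Since this holds for all \(X,Y\in\LieG_-\), we get \(\ad(X_+)\in\ker\torop\), which is the claim.

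The argument is purely formal, and there is no real obstacle; the only point requiring care is bookkeeping. The torsion operator is built using the embedding \(\LieG_0\hookrightarrow\LieGL{\LieG_-}\) by the adjoint action, so the bracket \([a(X),Y]\) in the definition of \(\torop\) is genuinely the Lie bracket \(\bigl[[X_+,X],Y\bigr]\) in \(\LieG\), which is exactly what makes the Jacobi identity directly applicable; one also uses \(\bigl[[X_+,X],Y\bigr]\in\LieG_-\) (again part of the \(3\)-grading) so that the output lands in \(\Lm{2}{\LieG_-}^*\otimes\LieG_-\) as required.
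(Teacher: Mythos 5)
Your proof is correct and is essentially the paper's argument: both unwind \(\torop(\ad(X_+))\) and apply the Jacobi identity to \(X_+,X,Y\), using that \(\LieG_-\) is abelian to kill the term \([[X,Y],X_+]\) and conclude the symmetry \([[X_+,X],Y]=[[X_+,Y],X]\). The bookkeeping remark about the bracket in the definition of \(\torop\) being the genuine Lie bracket via \(\LieG_0\subset\LieGL{\LieG_-}\) matches how the paper uses it, so nothing is missing.
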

\begin{proof}
Pick a root \(\gamma\).
Pick \(X_{\gamma}\) and \(X_{-\gamma}\) bases of the root spaces \(\LieG_{\gamma}\) and \(\LieG_{-\gamma}\).
The kernel of \(\ad\) must be a sum of root spaces, by equivariance under the Cartan subalgebra.
By the Jacobi identity, for \(X_{+} \in \LieG_+\) and \(Y_{-}, Z_{-} \in \LieG_-\),
\begin{align*}
\left[
	\ad_{X_{+}}\left(Y_{-}\right),Z_{-}
\right]
&=
-\left[
	\left[
		Z_{-},X_{+}
	\right]
	,Y_{-}
\right]
-
\left[
	\left[
		Y_{-},Z_{-}
	\right]
	,X_{+}
\right]
\\
&=
\left[
	\ad_{X_{+}}
	\left(
		Z_{-}
	\right)
	,Y_{-}
\right].
\end{align*}
\end{proof}

\subsection{The torsion operator in a Chevalley basis}

Let
\[
H = 
\sum_{\beta^{-}} H_{\beta^{-}}.
\]
Then for any \(a \in \LieG_-^* \otimes \LieG_0\),
define \(\tortr{a} \in \LieG_+\), the \emph{trace}
of \(a\),
by
\[
\tortr{a}=
\sum_{\beta^-, \delta^-}
\frac
	{
	\left<
		\left[
			a
				\left(
					X_{\beta^{-}}
				\right)
				,
					X_{\beta^{+}}
		\right]
		,X_{\delta^{-}}
	\right>
	}
{
	\delta^{-}(H) 
	\left<
		X_{\delta^{-}},
		X_{\delta^{+}}
	\right>
}
X_{\delta^{+}}.
\]

It is easy to check that for any \(X_{+} \in \LieG_+\), if \(a = \ad_{X_+}\) then  \(\tortr{a}=X_+\).

\begin{lemma}
If \(a \in \LieG_-^* \otimes \LieG_0\) and \(Da=0\) then
\(
a = \ad_{X_+} + b,
\)
for a unique \(X_+ \in \LieG_+\) (given by \(X_+ = \tortr{a}\)) and a unique \(b \in \LieG_-^* \otimes \LieG_0\) for which \(Db=0\) and \(\tortr{b}=0\).
\end{lemma}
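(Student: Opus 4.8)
The plan is to reduce everything to three facts already established: (i) $\ad\colon\LieG_+ \to \LieG_-^* \otimes \LieG_0$ is injective; (ii) the image of this $\ad$ lies in $\ker\torop$; and (iii) $\tortr{\ad_{X_+}} = X_+$ for every $X_+ \in \LieG_+$. Both the torsion operator $\torop$ and the trace $a \mapsto \tortr{a}$ are manifestly linear in their argument, as one reads off from their defining formulas, and that linearity is the only additional input needed.

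For existence, put $X_+ := \tortr{a} \in \LieG_+$ and $b := a - \ad_{X_+} \in \LieG_-^* \otimes \LieG_0$, so that $a = \ad_{X_+} + b$ by construction. Applying $\torop$ and using linearity, $\torop b = \torop a - \torop(\ad_{X_+})$; the first term vanishes by hypothesis and the second by (ii), so $\torop b = 0$. Applying the trace and using linearity together with (iii), $\tortr{b} = \tortr{a} - \tortr{\ad_{X_+}} = X_+ - X_+ = 0$. This simultaneously shows that in any such decomposition the $\LieG_+$-component must equal $\tortr{a}$, so the formula $X_+ = \tortr{a}$ is forced.

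For uniqueness, suppose $a = \ad_{Y_+} + c$ with $\torop c = 0$ and $\tortr{c} = 0$. Taking the trace of both sides and using (iii) gives $\tortr{a} = Y_+ + \tortr{c} = Y_+$, hence $Y_+ = \tortr{a} = X_+$, and then $c = a - \ad_{Y_+} = a - \ad_{X_+} = b$. The injectivity (i) of $\ad$, which is in any case immediate from (iii) and linearity, plays no further role: the trace pins down the $\LieG_+$-component outright, and $b$ is then determined.

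There is no genuine obstacle here; the statement is a bookkeeping corollary of the preceding lemmas. The only point worth a moment's care is that the trace map is well defined, i.e.\ that the denominators $\delta^-(H)\,\langle X_{\delta^-},X_{\delta^+}\rangle$ appearing in its definition are nonzero for every noncompact negative root $\delta^-$ of the cominiscule variety — but this is already presupposed by the computation $\tortr{\ad_{X_+}} = X_+$ recorded just before the lemma, so nothing new needs to be verified.
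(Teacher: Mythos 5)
Your proposal is correct and is exactly the argument the paper intends: the paper's own proof is the single line ``let \(X_+=\tortr{a}\) and \(b=a-\ad_{X_+}\),'' leaving to the reader the verification you spell out via linearity of \(\torop\) and \(\tortr{}\), the lemma that \(\ad(\LieG_+)\subset\ker\torop\), and the identity \(\tortr{\ad_{X_+}}=X_+\). Nothing is missing; you have simply made the paper's implicit bookkeeping explicit, including the uniqueness step via taking traces.
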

\begin{proof}
Let \(X_+ = \tortr{a}\) and let \(b=a-\ad_{X_+}\).
\end{proof}

\subsection{Tensor invariants in cominiscule geometries}

Suppose that \(P \to E \to M\) is a cominiscule geometry, with Cartan connection \(\omega\), over a vector subbundle \(V_M \subset TM\).
The identification 
\[
\begin{tikzcd}
0 \arrow{r} & \text{vertical}_e \arrow{r} \arrow{d}{\omega_e} & V_e E \arrow{r} \arrow{d}{\omega_e} & V_m M \arrow{r} \arrow{d} & 0 \\
0 \arrow{r} & \LieP \arrow{r} & \LieG \arrow{r}  & \LieG/\LieP \arrow{r} & 0 
\end{tikzcd}
\]
identifies the fundamental tensor \(\tau\) of \(G/P\) with a tensor on \(V_m M\), which we will also call \(\tau\).
Because \(\tau\) is a \(P\)-invariant tensor on \(\LieG/\LieP\), this identification is independent of the point \(e \in E\), determining a holomorphic section \(\tau\) of \(V^* \otimes V \otimes V^* \otimes V\), where \(V=V_M\).

If \(\LieG = \bigoplus_i \LieG^i\), let \(p_i \colon \LieG/\LieP \to \LieG^i/\LieP^i\) be the obvious projection. 
The same identification yields a splitting \(V = \bigoplus_i V_i\) corresponding to the splitting \(\LieG/\LieP = \bigoplus_i \LieG^i/\LieP^i\), and a holomorphic projection tensor \(\varpi^i \in V^* \otimes V_i\) identified with \(p_i\). 
It also yields a barnacle tensor \(\alpha\), a section of \(V^* \otimes V_i\) on \(M\).
A cominiscule geometry on a foliation is \emph{normal} if its curvature is \(\delta\)-closed.

\begin{lemma}
Suppose that \(E \to M\) is a cominiscule geometry on a vector bundle \(V_M \subset TM\), with model \(G/P\). 
The underlying first order structure of \(E \to M\) is the \(G_0\)-structure \(E/G_+ \to \frm{V_M}\). 
Its image in \(\frm{V_M}\) consists precisely in the linear isomorphisms 
\(
u \colon V_m M \to \LieG/\LieP
\)
preserving the canonical splitting tensors, fundamental tensor and barnacle tensor.
In particular, two cominiscule geometries have the same underlying first order structure just precisely when they have the same canonical splitting, fundamental tensor and barnacle tensor.
\end{lemma}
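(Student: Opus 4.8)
The plan is to read off the first two assertions directly from the general construction of the underlying first order structure together with lemma~\ref{lemma:IdentityComponent}, and then to identify the image inside \(\frm{V_M}\) by a transitivity argument over each point of \(M\).

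First I would recall that for any \(G/H\)-geometry the underlying first order structure is the \(\pr{H/H_1}\)-structure \(E/H_1 \to \frm{V_M}\), where \(H_1 \subset H\) is the subgroup acting trivially on \(V_0 = \LieG/\LieH\), i.e.\ the kernel of the morphism \(\rho \colon H \to \GL{V_0}\) given by the \(H\)-action on \(V_0\); the fibers of \(E \to \frm{V_M}\) are precisely the \(H_1\)-orbits. Here \(H = P\) and \(V_0 = \LieG/\LieP\), so by lemma~\ref{lemma:IdentityComponent} the kernel of \(\rho\) is \(G_+\) and the image is \(G_0\). Hence the underlying first order structure is \(E/G_+ \to \frm{V_M}\), it is a \(G_0\)-structure, and, because \(G_0\) acts faithfully on \(\LieG/\LieP\), the map \(E/G_+ \to \frm{V_M}\) embeds it as a principal \(G_0\)-subbundle; call its image \(S \subset \frm{V_M}\). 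This gives the first two sentences.

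Next I would pin down \(S\). Let \(S' \subset \frm{V_M}\) be the set of linear isomorphisms \(u \colon V_m M \to \LieG/\LieP\) carrying the canonical splitting projections \(\varpi^i\), the fundamental tensor \(\tau\) and the barnacle tensor \(\alpha\) on \(V_M\) to the corresponding standard tensors on \(\LieG/\LieP\). For \(e \in E\) over \(m\), the frame \(u = u(e)\) is by definition the isomorphism \(V_m M \to \LieG/\LieP\) induced by \(\omega_e\) through the commutative diagram of subsection ``Tensor invariants in cominiscule geometries'', and it was precisely through this diagram that \(\varpi^i, \tau, \alpha\) on \(V_M\) were defined as the transports of the corresponding tensors on \(\LieG/\LieP\); hence \(u(e) \in S'\), so \(S \subseteq S'\). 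For the reverse inclusion, fix \(e \in E\) over \(m\), put \(u_1 = u(e)\), and let \(u_0 \in S'\) be any frame over \(m\). Then \(u_1 \circ u_0^{-1} \colon \LieG/\LieP \to \LieG/\LieP\) carries the standard splitting, fundamental tensor and barnacle tensor to themselves, so by lemma~\ref{lemma:IdentityComponent} it lies in \(G_0\), which is the image of \(\rho\); choosing \(h \in P\) with \(\rho(h) = u_1 \circ u_0^{-1}\) and using the \(P\)-equivariance \(u(eh) = \rho(h)^{-1} u(e)\) of the frame-bundle map (which comes from \(r_h^* \omega = \Ad_h^{-1}\omega\)), we get \(u(eh) = u_0\). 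Thus \(S = S'\), which is the content of the middle assertion.

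Finally, the ``in particular'' follows because the subbundle \(S \subset \frm{V_M}\) and the triple \(\pr{\varpi^i, \tau, \alpha}\) on \(V_M\) determine one another: \(S\) is defined from the triple, and conversely any \(u \in S_m\) pulls the standard tensors on \(\LieG/\LieP\) back to the triple on \(V_m M\), the pullback being independent of the choice of \(u \in S_m\) because any two such frames differ by an element of \(G_0\), which fixes the standard tensors. So two cominiscule geometries on \(V_M\) share the same underlying first order structure exactly when they induce the same canonical splitting, fundamental tensor and barnacle tensor. The only genuinely delicate point is keeping the \(P\)-action conventions on \(\frm{V_M}\) consistent, so that the equivariance \(u(eh) = \rho(h)^{-1} u(e)\) has the direction needed in the surjectivity step; the rest is a direct appeal to lemma~\ref{lemma:IdentityComponent}.
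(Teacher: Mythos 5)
Your proof is correct and is essentially the paper's argument: the paper's entire proof is the citation of lemma~\ref{lemma:IdentityComponent} (kernel \(G_+\), image \(G_0\), and \(G_0\) exactly the stabilizer of the splitting, fundamental and barnacle tensors), and you have simply written out the routine equivariance and transitivity details that this reference leaves implicit.
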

\begin{proof}
See lemma~\ref{lemma:IdentityComponent}.
\end{proof}

\section{Cominiscule geometries and underlying first order structures}%
\label{section:pairs}

Suppose that \(\pi \colon E \to M\) is a Cartan geometry over a vector bundle \(V_M\), with model \(X=G/H\). 
Write the composition \(E \to E/H_1 \to \frm{V_M}\) as \(\pi^{(1)}\).
Suppose that \(E \to M\) and \(E' \to M\) are two holomorphic Cartan geometries on the same vector bundle \(V_M \subset TM\), with the same model \(X=G/H\), and with the same underlying first order structure.
Let \(E \times_{\frm{V_M}} E'\) be the set of all tuples
\(
\left(m,u,e,e'\right),
\)
so that \(m \in M\) and \(u \colon V_m M \to \LieG/\LieH\) is a linear isomorphism and \(e \in E\) and \(e' \in E'\) and
\[
u=\pi^{(1)}(e) = \left(\pi'\right)^{(1)}\left(e'\right).
\]
The bundle \(E \times_{\frm{V_M}} E' \to M\) is a holomorphic principal right \(H \times_{H/H_1} H\)-bundle. 
The obvious maps \(E \times_{\frm{V_M}} E' \to E\) and \(E \times_{\frm{V_M}} E' \to E'\) are each holomorphic principal right \(H_1\)-bundles.

\begin{lemma}\label{lemma:SolderingMatch}
Suppose that \(E \to M\) and \(E' \to M\) are two Cartan geometries over the same vector bundle \(V_M \subset TM\) with the same model \(X=G/H\), and the same underlying first order structure. 
If we write the pullback of any form by the same symbol, then on \(E \times_{\frm{V_M}} E'\), 
\(\sigma = \omega' + \LieH = \omega+\LieH\).
\end{lemma}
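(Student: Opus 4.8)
The plan is to pull both soldering forms back to \(E \times_{\frm{V_M}} E'\) and recognise each as the pullback of the single tautological \(\LieG/\LieH\)-valued \(1\)-form \(\sigma\) on the frame bundle \(\frm{V_M}\); since they are pulled back along maps of \(E \times_{\frm{V_M}} E'\) that agree, the two forms must coincide there.

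First I would record the relation between the soldering form of a Cartan geometry and the tautological form on its frame bundle, which is really just the content of the subsection on underlying first order structures. For any holomorphic Cartan geometry \(\pi \colon E \to M\) on \(V_M\) with model \(G/H\), write \(\pi^{(1)} \colon E \to \frm{V_M}\) for the first order structure map, so that \(\pi^{(1)}(e)\) is the linear isomorphism \(u(e) \colon V_m M \to \LieG/\LieH\) determined by \(u(e)\pr{\pi'(e)v} = v \hook \pr{\omega+\LieH}\) for \(v \in V_e E\). Because \(\pi^{(1)}\) covers the identity of \(M\), for \(v \in V_e E\) one has \(\pi'\!\pr{\pi^{(1)}(e)}\pr{\pr{\pi^{(1)}}'(e) v} = \pi'(e) v\), and feeding this into the definition of \(\sigma\) on \(\frm{V_M}\) gives \(v \hook \pr{\pi^{(1)}}^*\sigma = u(e)\pr{\pi'(e) v} = v \hook \pr{\omega+\LieH}\). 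Hence \(\omega+\LieH = \pr{\pi^{(1)}}^*\sigma\) as \(1\)-forms on \(V_E\); likewise \(\omega'+\LieH = \pr{\pr{\pi'}^{(1)}}^*\sigma\) on \(V_{E'}\).

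Next I would invoke the defining property of the fiber product. The hypothesis that \(E \to M\) and \(E' \to M\) have the same underlying first order structure is precisely what guarantees that \(E \times_{\frm{V_M}} E' \to M\) is the nonempty holomorphic principal \(H \times_{H/H_1} H\)-bundle described above, with projections \(q \colon E \times_{\frm{V_M}} E' \to E\) and \(q' \colon E \times_{\frm{V_M}} E' \to E'\) satisfying \(\pi^{(1)} \circ q = \pr{\pi'}^{(1)} \circ q'\); call this common composite \(p \colon E \times_{\frm{V_M}} E' \to \frm{V_M}\). Pulling the previous paragraph's identities back and using \(\pi^{(1)} \circ q = p = \pr{\pi'}^{(1)} \circ q'\),
\[
q^*\pr{\omega+\LieH} = p^* \sigma = \pr{q'}^*\pr{\omega'+\LieH}.
\]
Writing the pullback of each form by the same symbol, as in the statement, this says \(\sigma = \omega+\LieH = \omega'+\LieH\) on \(E \times_{\frm{V_M}} E'\), which is the assertion.

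I do not expect a serious obstacle: the argument is essentially the observation that a soldering form is the pullback of the tautological form on the frame bundle, together with the fact that the two first order structure maps agree over the fiber product. The one point requiring care is that \(\omega+\LieH\), \(\omega'+\LieH\) and \(\sigma\) are only defined on the subbundles \(V_E\), \(V_{E'}\) and \(V_{\frm{V_M}}\) rather than on the full tangent bundles, so one must check that \(q\), \(q'\) and \(p\) restrict to maps of these subbundles onto one another; this holds because each of them covers the identity of \(M\) and \(V_M\) is the common datum, and once noted the chain of equalities above is a literal identity of \(1\)-forms.
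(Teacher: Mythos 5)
Your proof is correct and is essentially the paper's own argument: the paper's proof simply says the identity follows "by unwinding definitions," and your computation that \(\omega+\LieH=\pr{\pi^{(1)}}^*\sigma\) (and likewise for \(\omega'\)), followed by pulling back along the two agreeing maps to \(\frm{V_M}\) over the fiber product, is exactly that unwinding made explicit.
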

\begin{proof}
By symmetry it is enough to prove that \(\omega+\LieH = \sigma\), and it is enough to prove this on \(E\), which we see by unwinding definitions. 
\end{proof}

\begin{proposition}\label{proposition:DefineBandXPlus}
Suppose that \(E \to M\) and \(E' \to M\) are two cominiscule geometries with same model \(G/P\), on the same vector bundle \(V_M \subset TM\) and with the same fundamental, splitting and barnacle tensors. 
On \(E \times_{\frm{V_M}} E'\), there are unique holomorphic functions
\(
s \colon E \times_{\frm{V_M}} E' \to \LieG_-^* \otimes \LieG_0,
\)
and
\(
X_+ \colon E \times_{\frm{V_M}} E' \to \LieG_+,
\)
so that 
\begin{enumerate}
\item \(\tortr{s}=0\),
\item \(\omega'_0 = \omega_0 + \left(s +  \ad\left(X_{+}\right)\right) \, \sigma\),
\item \(s\) and \(X_+\) are equivariant under the \(P \times_{G_0} P\)-action.
\end{enumerate}
\end{proposition}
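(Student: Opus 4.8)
The plan is to work over $\hat{E} := E \times_{\frm{V_M}} E'$. By the description of the underlying first order structure of a cominiscule geometry (lemma~\ref{lemma:IdentityComponent}), the kernel $H_1$ of $P \to \GL{\LieG/\LieP}$ is $G_+$, so $\hat{E}$ is $E \times_B E'$ over the common $G_0$-structure $B = E/G_+ = E'/G_+$, a holomorphic principal $(P \times_{G_0} P)$-bundle over $M$. By lemma~\ref{lemma:SolderingMatch}, writing pullbacks by the same symbols, $\omega + \LieP = \omega' + \LieP = \sigma$ on $\hat{E}$, so $\omega' - \omega$ is $\LieP$-valued on $V_{\hat{E}}$; let $\phi_0$ denote its $\LieG_0$-component for the fixed grading $\LieG = \LieG_- \oplus \LieG_0 \oplus \LieG_+$, so that $\phi_0 = \omega'_0 - \omega_0$. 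The two points to establish are: \emph{(i)} $\phi_0$ is semibasic for $\hat{E} \to M$, so $\phi_0 = A\,\sigma$ for a unique holomorphic $A \colon \hat{E} \to \LieG_-^* \otimes \LieG_0$; and \emph{(ii)} $A$, and hence the pieces into which it will be split, transforms correctly under $P \times_{G_0} P$.

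For (i), I would evaluate $\omega' - \omega$ on the fundamental vector fields of the $(P \times_{G_0} P)$-action on $\hat{E}$. A fundamental vector field for $Z_0$ in the diagonal copy of $\LieG_0$ projects under $\hat{E} \to E$ and $\hat{E} \to E'$ to the fundamental vector fields of $Z_0$ on $E$ and on $E'$, on each of which the Cartan connection takes the value $Z_0$; so $\omega' - \omega$ annihilates it. A fundamental vector field for $X_+ \in \LieG_+$ in the first factor is vertical for $\hat{E} \to E'$ and projects under $\hat{E} \to E$ to the fundamental vector field of $X_+$ on $E$, so $\omega' - \omega$ takes the value $0 - X_+ = -X_+ \in \LieG_+$ on it; symmetrically, in the second factor one gets $+X_+ \in \LieG_+$. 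In every case the $\LieG_0$-component vanishes, and these vector fields span the vertical bundle of $\hat{E} \to M$, so $\phi_0$ is semibasic. Since $\sigma$ is itself semibasic and, fibrewise, identifies $V_{\hat{E}}$ modulo that vertical bundle with $\LieG/\LieP \cong \LieG_-$, the form $\phi_0$ factors through $\sigma$: $\phi_0 = A\,\sigma$ for a unique holomorphic $A$ valued in $\operatorname{Hom}(\LieG_-, \LieG_0) = \LieG_-^* \otimes \LieG_0$.

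Next, set $X_+ := \tortr A$ and $s := A - \ad(X_+)$. The identity $\tortr(\ad_{X_+}) = X_+$ observed just after the definition of the trace gives $\tortr s = \tortr A - X_+ = 0$, which is~(1), and~(2) is just $\phi_0 = (s + \ad(X_+))\,\sigma$ unravelled. Uniqueness is then forced: (2) gives $A = s + \ad(X_+)$, since $\sigma$ is a pointwise surjection with the vertical as kernel and so $A\,\sigma$ determines $A$; applying $\tortr$ and using (1) together with $\tortr(\ad_{X_+}) = X_+$ pins down $X_+ = \tortr A$ and hence $s = A - \ad(X_+)$. (Injectivity of $\ad \colon \LieG_+ \to \LieG_-^* \otimes \LieG_0$ from lemma~\ref{lemma:GplusActsFaithfullyonGminus} is consistent with, but not needed for, this.)

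It remains to check (ii)/(3), and I expect this to be the main obstacle. I would pull $\phi_0 = A\,\sigma$ back by $r_{(p,p')}$, using $r_p^*\omega = \Ad_p^{-1}\omega$ on the $E$-factor and $r_{p'}^*\omega' = \Ad_{p'}^{-1}\omega'$ on the $E'$-factor. On the diagonal $G_0$ the adjoint action preserves the grading, so $r^*_{(p_0,p_0)}\phi_0 = \Ad_{p_0}^{-1}\phi_0$ and $r^*_{(p_0,p_0)}\sigma = \Ad_{p_0}^{-1}\sigma$, whence $A$ transforms as a tensor in $\LieG_-^* \otimes \LieG_0$; as $\tortr$ and $\ad$ are $G_0$-equivariant, so are $X_+$ and $s$. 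On the first $G_+$-factor, equation~\eqref{equation:AdexpX} applied to $e^{-X_+}$ shows that $r^*_{(e^{X_+},1)}$ fixes $\sigma$ and carries $\phi_0$ to $\phi_0 - \ad(X_+)\,\sigma$, i.e. $A \mapsto A - \ad(X_+)$; then $\tortr A \mapsto \tortr A - X_+$ while $s = A - \ad(\tortr A)$ is unchanged, which is the expected affine equivariance of $X_+$ under $G_+$, and the second factor is symmetric. The delicate step is the $G_0$-equivariance of $\tortr$: this rests on $\sum_{\beta^-} H_{\beta^-}$ being a nonzero multiple of the grading element of $\LieG_0$, so that $\delta^-(H)$ is one and the same constant for every noncompact negative root $\delta^-$ and $\tortr$ collapses to the manifestly $G_0$-equivariant contraction $a \mapsto \mathrm{const}\cdot\sum_{\beta^-}\left[a(X_{\beta^-}), X_{\beta^+}\right]$; after that one only has to keep the signs and the two $G_+$-shifts straight when combining them with the $G_0$-action.
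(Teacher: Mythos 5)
Your treatment of the analytic core of the proposition is correct and takes a genuinely different, and in fact simpler, route than the paper: you prove that \(\omega_0'-\omega_0\) is semibasic by evaluating on the fundamental vector fields of the \(P\times_{G_0}P\)-action (diagonal \(\LieG_0\) gives \(0\), the two \(\LieG_+\)-factors give values in \(\LieG_+\)), whereas the paper differentiates the two Cartan connections, compares structure equations, and invokes the faithfulness of \(\LieG_0\) on \(\LieG_-\) from lemma~\ref{lemma:IdentityComponent} to kill the \(\LieG_0\)-component on the kernel of \(\sigma\). Your route avoids both the curvature bookkeeping and that lemma, and your existence/uniqueness of the splitting \(A=s+\ad\pr{X_+}\) via \(X_+=\tortr A\) and \(\tortr\pr{\ad_{X_+}}=X_+\) is exactly the paper's (trivial) decomposition. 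You also go further than the paper's own proof, which never verifies item (3) at this point (the transformation laws are only asserted later, inside the proof of theorem~\ref{theorem:Difference}).

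The gap is in your justification of the \(G_0\)-equivariance of \(\tortr\). Your reduction of (3) to that equivariance is right, and so is the observation that \(\sum_{\beta^-}H_{\beta^-}\) is Weyl\(\pr{G_0}\)-invariant, hence proportional to the grading element, so \(\delta^-(H)\) is constant (though only on each simple factor, not globally). But the resulting contraction \(a\mapsto\mathrm{const}\cdot\sum_{\beta^-}\left[a\pr{X_{\beta^-}},X_{\beta^+}\right]\) is \emph{not} manifestly \(G_0\)-equivariant: the \(G_0\)-invariant element of \(\LieG_-\otimes\LieG_+\) is \(\sum_{\beta^-}X_{\beta^-}\otimes X_{\beta^+}/\left<X_{\beta^-},X_{\beta^+}\right>\), and in a Chevalley basis \(\left<X_{\beta^-},X_{\beta^+}\right>=2/\left<\beta,\beta\right>\) depends on the root length. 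For the simply-laced models this is harmless, but for the odd quadrics (\(B_n\), crossed node the long simple root) and the Lagrangian Grassmannians (\(C_n\)) the noncompact roots have two different lengths; already for \(\mathfrak{so}\pr{5,\C{}}\) one checks that \(X_{e_2}\) does not annihilate \(\sum_{\beta^-}X_{\beta^-}\otimes X_{\beta^+}\) (the relevant Chevalley constants have absolute values \(1\) and \(2\)), so the unnormalized contraction fails equivariance there. Note that this defect is inherited from the paper's written formula for \(\tortr\) rather than introduced by you, since that formula collapses to the same unnormalized contraction once \(\delta^-(H)\) is constant. The repair is to take the honest trace: send \(a\) to the Killing-dual of the functional \(Y_-\mapsto\operatorname{tr}_{\LieG_-}\pr{a\pr{Y_-}}\), equivalently insert the factor \(1/\left<X_{\beta^-},X_{\beta^+}\right>\) into the \(\beta\)-sum. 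This is manifestly \(G_0\)-equivariant, is still a left inverse of \(\ad\colon\LieG_+\to\LieG_-^*\otimes\LieG_0\) up to a nonzero constant on each factor, and with it the rest of your argument (including the affine \(G_+\times G_+\) shifts, up to the sign bookkeeping you already flag) goes through.
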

\begin{proof}
From lemma~\vref{lemma:SolderingMatch}, \(\omega_- = \omega_-' = \sigma\). 
Taking exterior derivative of some local extension of \(\omega\) and \(\omega'\) to equivariant differential forms:
\[
0=
\frac{1}{2}\left[\omega_0-\omega_0',\omega_-\right]
+
\frac{1}{2}\left[\omega_-,\omega_0-\omega_0'\right]
-
\frac{1}{2}\left(K_- - K_-'\right)\sigma \wedge \sigma,
\]
where \(K \sigma \wedge \sigma\) and \(K' \sigma \wedge \sigma\) are the curvatures, and \(K_-\) and \(K'_-\) are the components of \(K\) and \(K'\) valued in \(\LieG_-\).
Pick any vector \(v \in V_{E \times_{\frm{V_M}} E'}\) for which \(v \hook \sigma=0\).
Let \(A_0 = v \hook \left(\omega_0 - \omega_0'\right)\).
Then plug in \(v\) to the above to find
\[
0 = \frac{1}{2}\left[A_0,\omega_-\right]
+
\frac{1}{2}\left[\omega_-,A_0\right].
\]
In other words, if \(B_- \in \LieG_-\), then
\(
0 = \left[A_0,B_-\right].
\)
By lemma~\vref{lemma:IdentityComponent}, \(\LieG_0\) is a subalgebra of \(\LieGL{\LieG_-}\),
ie. this forces \(A_0=0\). 
Therefore on the kernel of \(\omega_-=\sigma\), we have \(\omega_0=\omega_0'\), so
\(
\omega_0' = \omega_0 + \left(s + \ad_{X_+}\right)\sigma,
\)
where \(s \in \LieG_-^* \otimes \LieG_0\) and \(\tortr{s} = 0\).
\end{proof}

\begin{lemma}\label{lemma:expPlus}
Suppose that \(E \to M\) is a cominiscule geometry with model \(G/P\) and Cartan connection \(\omega\).
If \(X_+ \in \LieG_+\) and we let \(g_+ = e^{X_+}\),
then
\[
r_{g_+}^* 
\begin{pmatrix}
\omega_- \\
\omega_0 \\
\omega_+
\end{pmatrix}
= 
\begin{pmatrix}
I & 0 & 0 \\
-\ad\left(X_+\right) & I & 0 \\
\frac{1}{2} \ad\left(X_+\right)^2 & -\ad\left(X_+\right) & 0
\end{pmatrix}
\begin{pmatrix}
\omega_- \\
\omega_0 \\
\omega_+
\end{pmatrix}
\]
\end{lemma}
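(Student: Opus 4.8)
This lemma is a purely algebraic consequence of the equivariance axiom of a Cartan connection, so the plan is to unwind condition~(1) of the definition. Since $g_+ = e^{X_+}$ lies in $G_+ \subset P$, equivariance gives $r_{g_+}^* \omega = \Ad_{g_+}^{-1} \omega = \Ad\pr{e^{-X_+}} \omega$, and the entire content of the lemma is the block-by-block computation of this adjoint action with respect to the grading $\LieG = \LieG_- \oplus \LieG_0 \oplus \LieG_+$.

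First I would record that, because $G/P$ is cominiscule, this grading is a $1$-grading: $G_+$ is abelian, so $\left[\LieG_+,\LieG_+\right] = 0$, and more generally $\ad\pr{X_+}$ raises the grading by one, carrying $\LieG_- \to \LieG_0 \to \LieG_+ \to 0$. Hence $\ad\pr{X_+}^3 = 0$, the exponential series terminates, and $\Ad\pr{e^{-X_+}} = I - \ad\pr{X_+} + \frac{1}{2}\ad\pr{X_+}^2$; this is exactly equation~\eqref{equation:AdexpX} with $X_+$ replaced by $-X_+$. Then I would split $\omega = \omega_- + \omega_0 + \omega_+$ and apply this operator term by term, using the grading shift to drop the vanishing contributions: $\ad\pr{X_+}\omega_+ = 0$, and $\ad\pr{X_+}^2$ annihilates both $\omega_0$ and $\omega_+$, so the only surviving corrections are $\ad\pr{X_+}\omega_-$ in the $\LieG_0$-component together with $\ad\pr{X_+}\omega_0$ and $\ad\pr{X_+}^2\omega_-$ in the $\LieG_+$-component. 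Reassembling the three graded components produces the displayed matrix.

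There is no real obstacle here; the only thing demanding care is matching conventions, namely the sign in $\Ad_{g_+}^{-1}$ versus $\Ad_{g_+}$ and the normalization of equation~\eqref{equation:AdexpX}, so that the off-diagonal blocks come out as displayed. Because $\omega$ is honestly $\LieG$-valued and the identity is verified pointwise on $V_E$, no global or analytic input enters: the lemma falls out of the equivariance axiom combined with the nilpotency $\ad\pr{X_+}^3 = 0$ that is special to cominiscule models.
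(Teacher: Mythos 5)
Your proof is correct and is essentially the paper's own argument: invoke the equivariance axiom $r_{g_+}^*\omega=\Ad\pr{g_+}^{-1}\omega$ and then apply equation~\eqref{equation:AdexpX}, with the nilpotency coming from the abelian grading. The only caveat is that your computation (rightly) yields $I$ rather than $0$ in the bottom-right block, so the displayed matrix in the statement has a typo there; otherwise your sign bookkeeping matches the intended result.
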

\begin{proof}
From the definition of a Cartan connection, \(r_{g_+}^* \omega = \Ad\pr{g_+}^{-1} \omega\). The result then follows from equation~\vref{equation:AdexpX}.
\end{proof}

\begin{theorem}\label{theorem:Difference}
Any two cominiscule geometries on the same vector bundle, with the same model and underlying first order structure have canonically isomorphic total space.
To be more specific, suppose that \(E \to M\) and \(E' \to M\) are two cominiscule geometries with the same model \(G/P\) and on the same holomorphic vector bundle \(V_M \subset TM\).
Suppose that \(E\) and \(E'\) have the same fundamental tensor, barnacle tensor and splitting. Then there exists a unique bundle isomorphism \(E \cong E'\) so that \(\omega' = \omega + S \, \sigma\) for a unique holomorphic \(P\)-equivariant function \(S \colon E \to \pr{\LieG/\LieP}^* \otimes \LieP\) with \(\tortr S = 0\). 
The two cominiscule geometries are isomorphic, via an isomorphism that is the identity on \(M\), if and only if \(S=0\).

Conversely, suppose we have a cominiscule geometry \(E \to M\) with model \(G/P\) and Cartan connection \(\omega\). 
Every holomorphic section \(S\) of \(T^*M \otimes \ad(E)\) satisfying \(\tortr S=0\) determines a Cartan connection \(\omega'=\omega+S \, \sigma\), which has the same fundamental, splitting and barnacle tensors.
\end{theorem}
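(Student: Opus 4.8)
The plan is to build the isomorphism $E \cong E'$ over $E \times_{\frm{V_M}} E'$, using the functions $s$ and $X_+$ produced by Proposition~\ref{proposition:DefineBandXPlus}. First I would observe that by Lemma~\ref{lemma:SolderingMatch} the soldering forms agree, $\omega_- = \omega_-' = \sigma$, on $E \times_{\frm{V_M}} E'$, and by Proposition~\ref{proposition:DefineBandXPlus} there are holomorphic $P \times_{G_0} P$-equivariant functions $s$ (with $\tortr s = 0$) and $X_+$ with $\omega_0' = \omega_0 + (s + \ad(X_+))\,\sigma$. The key idea is that the function $X_+$ measures exactly the failure of the two lifts $e, e'$ of a frame $u$ to be ``aligned'': I would use $X_+$ to cut down $E \times_{\frm{V_M}} E'$ to a single copy of $E$, i.e. define a section of $E \times_{\frm{V_M}} E' \to E$ by the condition $X_+ = 0$, and show this section is a holomorphic principal bundle isomorphism onto its image, which by equivariance descends to an isomorphism $E \cong E'$ over $M$. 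Concretely, one translates by $g_+ = e^{X_+}$ on the $E'$ factor; Lemma~\ref{lemma:expPlus} tells us precisely how $\omega'$ transforms under $r_{g_+}^*$, and the second-order term $\tfrac12\ad(X_+)^2$ together with the $-\ad(X_+)$ terms is arranged so that after this translation $X_+$ is killed and the new mismatch lies purely in $s$ plus whatever $\omega_+$ mismatch remains.

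Next I would pin down the $\LieG_0$- and $\LieG_+$-components. On the locus $X_+ = 0$ the forms $\omega_-$ agree and, after the computation above, $\omega_0' - \omega_0 = s\,\sigma$ with $\tortr s = 0$. For the $\LieG_+$-component I would again differentiate: taking $d$ of the structure equation and using that the curvatures of $E$ and $E'$ both annihilate $I'$, the difference $\omega_+' - \omega_+$ is constrained by a linear equation whose solution space, given that the $\LieG_-$ and $\LieG_0$ parts are already determined, is a $P$-module of the form $(\LieG/\LieP)^* \otimes \LieG_+$ with the trace-free normalization inherited from $\tortr s = 0$; packaging the $\LieG_0$ and $\LieG_+$ pieces together gives the single equivariant function $S \colon E \to (\LieG/\LieP)^* \otimes \LieP$ with $\omega' = \omega + S\,\sigma$ and $\tortr S = 0$. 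Uniqueness of $S$ follows from uniqueness of $s$ and $X_+$ in Proposition~\ref{proposition:DefineBandXPlus} together with Lemma~\ref{lemma:GplusActsFaithfullyonGminus}, which guarantees the $\LieG_+$-part of the identification is not ambiguous. The statement that the geometries are isomorphic over the identity of $M$ exactly when $S = 0$ is then immediate: $S = 0$ says $\omega' = \omega$ under the identification $E \cong E'$, which is the definition of an isomorphism fixing $M$; conversely such an isomorphism forces the difference form to vanish.

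For the converse, I would simply check directly that given $S$ a holomorphic section of $T^*M \otimes \ad(E)$ with $\tortr S = 0$, pulled back to $E$ as a $P$-equivariant function valued in $(\LieG/\LieP)^* \otimes \LieP$, the $\LieG$-valued $1$-form $\omega' = \omega + S\,\sigma$ satisfies the three axioms of a Cartan connection: $P$-equivariance of $\omega'$ is the equivariance of $S$ combined with the known equivariance of $\omega$ and $\sigma$ (here one uses Lemma~\ref{lemma:expPlus} to handle the $G_+$-direction and the $G_0$-equivariance is direct); linear isomorphism on each $V_eE$ holds because $S\,\sigma$ is valued in $\LieP$ and semibasic, so it only alters $\omega$ by a nilpotent-type shift preserving the associated graded, hence invertibility is preserved; and the fundamental vector field condition is unchanged because $\sigma$ is semibasic, so $\vec A \hook S\,\sigma = 0$ for $A \in \LieP$. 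Finally, since $\omega' - \omega$ is semibasic and valued in $\LieP$, the induced map $\omega' + \LieP = \omega + \LieP = \sigma$ is unchanged, so the underlying first order structure --- and therefore the fundamental, splitting and barnacle tensors, which by the preceding lemma are determined by that first order structure --- all agree.

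\textbf{Main obstacle.} The delicate point is the middle step: showing that after translating by $e^{X_+}$ one really can eliminate $X_+$ \emph{and} that the remaining $\LieG_+$-mismatch assembles with $s$ into a single $P$-equivariant $S$ with $\tortr S = 0$. This requires carefully tracking the second-order $\tfrac12\ad(X_+)^2$ term from Lemma~\ref{lemma:expPlus} against the structure equation, and verifying the equivariance bookkeeping under the full $P$-action (not just $G_0$), since $X_+$ transforms inhomogeneously. Once that is in place, the rest --- uniqueness via Proposition~\ref{proposition:DefineBandXPlus} and Lemma~\ref{lemma:GplusActsFaithfullyonGminus}, and the converse direction --- is routine.
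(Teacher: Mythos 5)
Your proposal matches the paper's own argument essentially step for step: it works on \(E \times_{\frm{V_M}} E'\) with the functions \(s\) and \(X_+\) from Proposition~\ref{proposition:DefineBandXPlus}, cuts down to the zero locus of \(X_+\) (a principal \(P\)-subbundle, by exactly the \(P \times_{G_0} P\)-equivariance computation via Lemma~\ref{lemma:expPlus} that you flag as the delicate point) to identify \(E \cong E'\), then differentiates the structure equation and invokes Lemma~\ref{lemma:GplusActsFaithfullyonGminus} to show the \(\LieG_+\)-mismatch is semibasic, assembling \(S = S_0 + S_+\) with the same uniqueness and converse conclusions. This is correct and is the paper's approach.
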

Call \(S\) the \emph{obstruction} of the pair \(E,E'\).
Let \(\Sch_0 \subset \pr{\LieG/\LieP}^* \otimes \LieP\) be the \(P\)-submodule consisting of all \(s \in \pr{\LieG/\LieP}^* \otimes \LieP\) so that \(\tortr s = 0\).
For any holomorphic \(G_0\)-structure, let \(\Sch = E \times_P \Sch_0\) so that \(\Sch \to M\) is a holomorphic vector bundle.
Similarly, if we have a local biholomorphism \(f \colon M \to M'\) and \(G/P\)-geometries \(E \to M\) and \(E' \to M'\), and \(f\) locally identifies their underlying first order structures, define the obstruction of \(f\) to mean the obstruction of the pair \(\pr{E,f^*E'}\).
\begin{proof}
Under the \(P \times_{G_0} P\)-action on \(E \times_{\frm{V_M}} E\), the action on \(\omega\) and \(\omega'\) is obvious. 
Pick \(\left(g_0,g_0\right) \in G_0 \times_{G_0} G_0=G_0\).
Let \(\rho_- \colon G_0 \to \GL{g_-}\) and \(\rho_+ \colon G_0 \to \GL{g_+}\) be the obvious representations.
Then check that 
\[
r_{\left(g_0,g_0\right)}^* S_0 = \Ad \left(g_0\right)^{-1} \left( S_0 \circ \rho_-\left(g_0\right)\right).
\]
Similarly,
\(
r_{\left(g_0,g_0\right)}^* X_+ = \rho_+\left(g_0\right)^{-1} X_+.
\)

Pick \(Y_+, Y_+' \in \LieG_+\). Then let
\(
g_+ = e^{Y_+}, g_+' = e^{Y_+'}.
\)
By lemma~\vref{lemma:expPlus}, on \(E \times_{\frm{V_M}} E'\),
\begin{align*}
r_{\left(g_+,g_+'\right)}^*
\left(\omega_0' - \omega_0\right)
&=
r_{\left(g_+,g_+'\right)}^*
\left(
  \left(
    S + \ad
           \left(
              X_+
           \right) 
   \right) 
   \, \omega_-
\right)
\\
&=
\left(
  S + \ad
         \left(
            X_+-Y_+' + Y_+
         \right)
\right) 
\, \omega_- \\
&=
\left(
  r_{\left(g_+,g_+'\right)}^* S
  +
  \ad
    \left(
      r_{\left(g_+,g_+'\right)}^* X_+
    \right)
\right) 
\, \omega_-.
\end{align*}
Let \(F\) be the set of points of \(E \times_{\frm{V_M}} E'\) at which the function \(X_+\) (defined in proposition~\vref{proposition:DefineBandXPlus}) vanishes. 
The set \(F\) is a principal right \(P\)-subbundle.
Indeed its tangent space is a maximal integral manifold of the Pfaffian system \(\tortr{\left(\omega_0'-\omega_0\right)}=0\).
Therefore both maps \(F \to E\) and \(F \to E'\) are bundle isomorphisms.
We henceforth identify \(E\) with \(E'\) by \(E \to F \to E'\).

The function \(S_0 \colon E \to \LieG_-^* \otimes \LieG_0\) is \(P\)-equivariant, and therefore \(G_+\)-invariant.
Consequently if \(v\) is any tangent vector to \(E\) for which \(0=v \hook \omega_-=v \hook \omega_0\) then \(v \hook dS_0=0\) and \(S_0\) is \(G_0\)-equivariant.
If \(v\) is any tangent vector to \(E\) for which \(0 = v \hook \omega_-\) and \(v \hook \omega_0 = A_0\), then
\(
v \hook dS_0 = - \ad\left(A_0\right)S_0 + S_0 \circ \rho\left(A_0\right).
\)
Therefore
\begin{equation}\label{equation:da}
dS_0 = - \ad\left(\omega_0\right)S_0 + S_0 \circ \rho\left(\omega_0\right) + \nabla S_0 \, \omega_-,
\end{equation}
where \(\nabla S_0 \colon E \to \LieG_-^* \otimes \LieG_-^* \otimes \LieG_0\) is a holomorphic \(P\)-equivariant map.
The equation \(\tortr{S}=0\) forces \(\tortr{dS}=0\), so forces \(\tortr{\nabla S}=0\).

Take exterior derivative of both sides of the equation
\(
\omega_0' = \omega_0 + S \, \omega_-
\)
to find
\begin{align*}
0 
=&
-\frac{1}{2}\left[S \, \omega_-, \omega_0\right]
-\frac{1}{2}\left[\omega_0, S \, \omega_-\right]
-\frac{1}{2}\left[S \, \omega_-, S \, \omega_-\right]
-\frac{1}{2}\left[\omega_-, \omega_+' - \omega_+\right]
-\frac{1}{2}\left[\omega_+' - \omega_+, \omega_-\right]
\\
&+\frac{1}{2}\left(K_0' - K_0\right) \omega_- \wedge \omega_-
- dS \wedge \omega_-
+S
\left(
\frac{1}{2} \left[\omega_-,\omega_0\right]
+
\frac{1}{2} \left[\omega_0,\omega_-\right]
-
\frac{1}{2} \left[\omega_-,\omega_0\right]
\right).
\end{align*}
Plugging in what we know about \(dS\) from equation~\vref{equation:da},
\begin{align*}
0 
=&
-\frac{1}{2}\left[S \, \omega_-, S \, \omega_-\right]
-\frac{1}{2}\left[\omega_-, \omega_+' - \omega_+\right]
-\frac{1}{2}\left[\omega_+' - \omega_+, \omega_-\right]
\\
&+\frac{1}{2}\left(K_0' - K_0 - S K_-\right) \omega_- \wedge \omega_-
-
\left(\nabla S \omega_-\right) \wedge \omega_-.
\end{align*}

Pick a vector \(v\) tangent to \(E\) for which \(0=v \hook \omega_-\).
Suppose that \(v \hook \omega_+ = A_+\) and \(v \hook \omega_+' = A_+'\).
Plug in to find
\[
0 = \frac{1}{2}\left[\omega_-,A_+' - A_+\right]
-\frac{1}{2}\left[A_+' - A_+,\omega_-\right].
\]
If we then pick any \(B_- \in \LieG_-\) and take a vector \(w\) tangent to \(E\) with
\(w \hook \omega_- = B_-\), we find
\(
0 = \left[B_-, A_+'-A_+\right],
\)
i.e. \(\ad\left(A_+' - A_+\right)\) acts trivially as a linear map in \(\LieG_-^* \otimes \LieG_0\).
By lemma~\vref{lemma:GplusActsFaithfullyonGminus},
\(A_+'=A_+\). Therefore \(\omega_+' = \omega_+\) modulo \(\omega_-\).
So \(\omega_+'=\omega_+ + b \, \omega_-\)
for some unique holomorphic function
\(
S_+ \colon E \to \LieG_-^* \otimes \LieG_+.
\)
Set \(S=S_0+S_+\); the required morphism exists. 

Suppose that there are two isomorphisms
\begin{align*}
f & \colon E \to E', f^* \omega' = \omega + S \sigma, \\
\tilde{f} & \colon E \to E', \tilde{f}^* \omega' = \omega + \tilde{S} \sigma.
\end{align*}
Since \(f^* \omega'_- = \omega_- = \sigma\), it follows that \(E\) and \(E'\) have the same underlying first order structure.
Construct the set \(F\) as above: the set of points of \(E \times_{\frm{V_M}} E'\)
at which the function \(X_+\) (defined in proposition~\vref{proposition:DefineBandXPlus})
vanishes. 
It follows that the graphs of \(f\) and \(\tilde{f}\) both lie inside \(F\). 
Being principal \(P\)-bundles over \(M\), they must therefore be equal to \(F\) and to one another.
\end{proof}

A \emph{projective factor} of a cominiscule variety
\[
G/P = \left(G^1/P^1\right) \times \left(G^2/P^2\right) \times \dots \times \left(G^s/P^s\right).
\]
is a factor \(G^i/P^i = \Proj{n}\) with \(G^i=\PSL{n+1,\C{}}\).
A \emph{projective line factor} is a projective factor \(G^i/P^i=\Proj{1}\).

\begin{lemma}[\v{C}ap and Slovak \cite{Cap/Slovak:2009} p. 277 theorem 3.1.16, Goncharov \cite{Goncharov:1987}]%
\label{lemma:Cap.Schichl.Goncharov}
Suppose that \(\pr{G,X}\) is a cominiscule variety.
If a complex manifold admits a holomorphic \(\pr{G,X}\)-geometry, then the underlying holomorphic first order structure is a holomorphic \(G_0\)-structure.
Conversely, if a complex manifold admits a holomorphic \(G_0\)-structure, then this \(G_0\)-structure is locally the underlying first order structure of a normal holomorphic \(\pr{G,X}\)-geometry.
If \(\pr{G,X}\) contains no projective factors, then the normal holomorphic \(\pr{G,X}\)-geometry is unique.
\end{lemma}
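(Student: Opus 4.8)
The plan is to prove the three assertions in turn. The first is essentially already done: by the lemma just proved, for any cominiscule \((G,X)\)-geometry \(E\to M\) the underlying first order structure is \(E/G_+\to\frm{V_M}\), whose image consists of the frames \(u\colon V_mM\to\LieG/\LieP\) preserving the splitting, fundamental and barnacle tensors, and by lemma~\ref{lemma:IdentityComponent} that group of frames is exactly \(G_0\); so the underlying structure is a holomorphic \(G_0\)-structure, and I would simply cite those two facts.

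For the converse (local existence), I would run the prolongation/normalization of parabolic geometry, specialized to the length-one (cominiscule) grading, using the operators \(D\), \(\delta\) and \(\tortr\) set up above. Starting from a holomorphic \(G_0\)-structure \(B\to M\) with soldering form \(\sigma\), I would extend the structure group, \(E:=B\times_{G_0}P\), so that \(E/G_+\cong B\), and pull \(\sigma\) back to \(V_E\). I would then seek a Cartan connection \(\omega=\sigma+\omega_0+\omega_+\): choose, locally, any principal \(G_0\)-connection on \(B\) as a first guess for \(\omega_0\), set \(\omega_+=0\), and inspect the graded pieces \(K_-,K_0,K_+\) of the curvature. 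One then corrects \(\omega_0\) by a semibasic \(\LieG_-^*\otimes\LieG_0\)-valued term, using the decomposition lemma (\(Da=0\) implies \(a=\ad_{X_+}+b\) with \(\tortr b=0\)) together with the invertibility of the Kostant Laplacian on the orthocomplement of the harmonic subspace, to kill the non-normal part of \(K_-\) and \(K_0\); and then corrects \(\omega_+\) by a semibasic \(\LieG_-^*\otimes\LieG_+\)-valued term to kill the remaining non-normal part of \(K\). Because the grading has length one the procedure terminates after these two steps, and the output is a normal holomorphic cominiscule geometry whose underlying first order structure is \(B\); the construction is only local because the connection was chosen locally.

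For uniqueness I would use theorem~\ref{theorem:Difference}. Two normal cominiscule geometries on \(M\) with model \(G/P\) and the same underlying \(G_0\)-structure automatically have the same splitting, fundamental and barnacle tensors, so the theorem identifies their total spaces and writes \(\omega'=\omega+S\,\sigma\) for a unique \(P\)-equivariant \(S\colon E\to\Sch_0\), i.e. \(S\in\Lm{1}{\LieG_-}^*\otimes\LieP\) with \(\tortr S=0\). Computing \(K'-K\) exactly as in the proof of that theorem, the lowest-homogeneity part of the difference is the Chevalley--Eilenberg differential \(\partial S\); normality of \(K\) and \(K'\) then forces \(\delta\partial S=0\) there, and \(\tortr S=0\) supplies the matching coclosedness, so \(S\) is \(\Box\)-harmonic in that homogeneity and represents a class in \(H^1(\LieG_-,\LieG)\). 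For a length-one-graded simple \(\LieG\) this harmonic space is concentrated in non-positive homogeneity unless \(G/P=\Proj{n}\), where there is an extra harmonic piece in homogeneity \(+1\) — the component responsible for holomorphic quadratic differentials in the curve case. So when \(G/P\) has no projective factor the leading part of \(S\) vanishes; feeding this back and iterating up the (length-one) grading gives \(S=0\), hence \(\omega'=\omega\). Uniqueness in turn lets the local models from the previous step be patched into a global normal cominiscule geometry.

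The hard part will be the cohomological input at the end: determining precisely when a nonzero \(\Box\)-harmonic \(S\in\LieG_-^*\otimes\LieP\) with \(\tortr S=0\) can occur, and identifying that with the presence of projective factors. I would either quote Kostant's computation of \(H^{*}(\LieG_-,\LieG)\), or, to keep things self-contained, check it directly on a Chevalley basis by going through the irreducible cominiscule Dynkin diagrams and showing, via the explicit operators \(\delta\), \(D\), \(\tortr\), that such an \(S\) arises only for \(A_n\) with the extreme node marked, i.e. for \(\Proj{n}\). The residual bookkeeping — that the higher-homogeneity terms in \(K'-K\) cannot revive \(S\) once its leading component is killed — is the kind of routine grading argument I would expect to cause no trouble.
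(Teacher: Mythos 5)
The paper does not actually prove this lemma: it is quoted from \v{C}ap--Slov\'ak (theorem 3.1.16) and Goncharov, with the only added remarks being that their real-analytic/smooth proofs go through verbatim in the holomorphic category and that their construction, which uses only the fundamental tensor and splitting, produces the geometry on a covering space whose sheets correspond to choices of barnacle tensor, so including the barnacle tensor is a trivial modification. Your sketch is essentially a reconstruction of that cited argument --- the prolongation/normalization procedure for \(|1|\)-graded parabolic geometries plus Kostant's computation that \(\Cohom{1}{\LieG_-,\LieG}\) has no positive-homogeneity part except for projective models --- so in substance you are proving the lemma the same way the sources the paper relies on do, just restated with the paper's operators \(\torop\), \(\delta\), \(\tortr\). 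Two small cautions. First, in the uniqueness step you assert that \(\tortr S=0\) ``supplies the matching coclosedness'' so that \(S\) is harmonic; that is not literally right, since \(\ker\tortr\) is a \(G_0\)-invariant complement to \(\operatorname{ad}\pr{\LieG_+}\) in \(\LieG_-^*\otimes\LieG_0\) but need not coincide with \(\ker\partial^*\) (the copy of \(\LieG_+\) occurs with multiplicity in \(\LieG_+\otimes\LieG_0\)). The repair is standard and costs nothing: normality gives \(\partial^*\partial S=0\) in the lowest homogeneity, hence \(\partial S=0\) by positivity of the Kostant Laplacian; vanishing of \(\Cohom{1}{\LieG_-,\LieG}\) in that homogeneity (no projective factors) then makes \(S\) a coboundary \(\ad\pr{X_+}\), and only now does \(\tortr S=0\) force \(X_+=0\); iterate up the grading. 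Second, your local-existence step should note explicitly that all choices (the local holomorphic \(G_0\)-connection and the corrections) are holomorphic, and that starting from the \(G_0\)-structure rather than from the fundamental and splitting tensors alone is what silently disposes of the covering-space/barnacle subtlety that the paper flags for \(\Gr{p}{\C{2p}}\) and the quadrics.
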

The proofs of \cite{Cap/Slovak:2009} and \cite{Goncharov:1987} are in the real category, but they work identically in the holomorphic category.
The algorithm of \v{C}ap and Slovak assumes a fundamental tensor and splitting, but doesn't assume a barnacle tensor. 
However, the Cartan geometry is defined on a covering space corresponding precisely
to the possible choices of barnacle tensor. 
It is trivial to modify the proof of {\v{C}}ap and Slovak to include a barnacle tensor.

\begin{lemma}\label{lemma:normal.space}
Suppose that \(G/P\) is a cominiscule variety, \(M\) is a complex manifold and \(V_M \subset TM\) is a holomorphic subbundle.
If a \(V_M\) admits a holomorphic \(G/P\)-geometry, then the underlying holomorphic first order structure is a holomorphic \(G_0\)-structure.
Conversely, if \(V_M\) admits a holomorphic \(G_0\)-structure, then every \(G_0\)-structure on \(V_M\) is locally the underlying first order structure of a holomorphic \(G/P\)-geometry.
If \(V_M\) is bracket closed and admits a holomorphic \(G_0\)-structure, then every \(G_0\)-structure on \(V_M\) is locally the underlying first order structure of a normal holomorphic \(G/P\)-geometry.
If \(G/P\) contains no projective factors, then then normal holomorphic \(G/P\)-geometry is unique.
For any two local choices of holomorphic \(G/P\)-geometries with the given \(G_0\)-structure, wherever both are defined, there is a unique holomorphic bundle isomorphism between their principal bundles so that they differ by an obstruction.
\end{lemma}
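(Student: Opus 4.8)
The plan is to bootstrap every assertion from the manifold case, Lemma~\ref{lemma:Cap.Schichl.Goncharov}, and from the difference theorem, Theorem~\ref{theorem:Difference}, the only new ingredient being a reduction to the leaves of the foliation that \(V_M\) defines when it is bracket closed. The statement that the underlying first order structure of a holomorphic \(G/P\)-geometry on \(V_M\) is a holomorphic \(G_0\)-structure is already the content of the last lemma of the preceding section (proved there via Lemma~\ref{lemma:IdentityComponent}), whose argument uses neither \(V_M = TM\) nor integrability of \(V_M\); I would just cite it. Dually, the final assertion, on two holomorphic \(G/P\)-geometries on \(V_M\) sharing a \(G_0\)-structure, is immediate from Theorem~\ref{theorem:Difference}: a common \(G_0\)-structure is a common underlying first order structure, hence, by that same lemma, common fundamental, splitting and barnacle tensors, so Theorem~\ref{theorem:Difference} furnishes the unique bundle isomorphism under which \(\omega' = \omega + S\,\sigma\) with \(\tr S = 0\).

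For plain existence --- that every holomorphic \(G_0\)-structure on \(V_M\) is locally the underlying structure of some holomorphic \(G/P\)-geometry --- I would run the prolongation of \v{C}ap and Slovak \cite{Cap/Slovak:2009} directly on the \(G_0\)-bundle \(B \to M\) carrying its tautological \(\LieG_-\)-valued soldering form on \(V_B\). Over a small open set \(B\) is holomorphically trivial, hence admits a holomorphic connection compatible with the \(G_0\)-structure; feeding this into the algorithm and choosing the successive higher-order correction terms arbitrarily, each valued in a fixed finite dimensional \(G_0\)-module, produces a holomorphic principal \(P\)-bundle \(E \to M\) with \(E/G_+ = B\) together with a holomorphic Cartan connection on it. Nothing here uses that \(V_M\) is integrable or exhausts \(TM\), so the construction applies verbatim to an arbitrary holomorphic subbundle, giving the second assertion.

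The normal and uniqueness statements are where bracket-closedness enters. When \(V_M\) is bracket closed, the holomorphic Frobenius theorem writes \(M\) locally as a product \(L \times N\) with \(V_M = TL\), so a holomorphic \(G/P\)-geometry on \(V_M\) is a holomorphic \(G/P\)-geometry on the foliation by the slices \(L \times \{n\}\), i.e. a family of geometries on the leaves, and its curvature is the leafwise curvature, so normality is leafwise normality. Restricting the given \(G_0\)-structure to each leaf and applying Lemma~\ref{lemma:Cap.Schichl.Goncharov} there yields a normal holomorphic \(G/P\)-geometry on that leaf; since the \v{C}ap--Slovak construction is canonical --- at each order one solves a \(\delta\)-cohomological equation whose solution set is an affine subspace of a fixed module depending holomorphically on the input, and whose linear part vanishes exactly when \(G/P\) has no projective factors --- these leafwise geometries vary holomorphically in \(n\) and assemble to a normal holomorphic \(G/P\)-geometry on the foliation, with local solutions patching by the already-established uniqueness; this is the third assertion. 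When \(G/P\) has no projective factors each leafwise solution is unique, so the normal \(G/P\)-geometry on \(V_M\) is unique, and combining this with Theorem~\ref{theorem:Difference} (the obstruction \(S\) must then vanish on each leaf, hence identically) gives the fourth assertion.

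The step I expect to be the main obstacle is precisely this holomorphic dependence on the transverse parameter: one must check that the order-by-order solutions produced by the \v{C}ap--Slovak algorithm on the leaves can be, and have been, chosen to vary holomorphically in \(n\), so that they glue to genuine holomorphic objects over the chart in \(M\). This should follow because at each order both the relevant operator and the inhomogeneous term are holomorphic in all the variables, and any choice left undetermined by the cohomological equation can be fixed globally over the chart; but making this rigorous --- rather than quoting the leafwise theorem as a black box --- is the part that needs genuine care.
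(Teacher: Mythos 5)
Your proposal is essentially correct, and for the heart of the lemma it follows the same route as the paper: in the bracket-closed case the paper also reduces to a local product via Frobenius and runs the \v{C}ap--Schichl prolongation ``carrying around'' the transverse point as a parameter, and the point you flag as the main obstacle---holomorphic dependence of the order-by-order solutions on the transverse variable---is exactly the issue the paper dispatches in one sentence; your reasons for why it works (all data and operators holomorphic in all variables, canonical choice of solution at each stage, with the only genuine freedom occurring for projective factors) are the right ones, and you are in fact more explicit about this than the paper. Your handling of the first and last assertions, via the lemma on underlying first order structures and via Theorem~\ref{theorem:Difference}, is also what the paper intends.

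The one place you genuinely diverge is existence for a \(V_M\) that is not bracket closed. The paper does not run the prolongation on \(V_M\) at all: it chooses a local product splitting \(M=M_1\times M_2\) with \(V_M\) complementary to \(T^1\), pushes the \(G_0\)-structure onto the integrable summand \(T^2\), constructs the geometry there by the parametrized algorithm, and then defines the Cartan connection on \(V_E\) by projecting out the \(T^1\)-component. Your direct construction on \(B\) does work, but only because you forgo normalization: with the correction terms chosen arbitrarily, the construction degenerates to extending \(B\) to a \(P\)-bundle and adding an arbitrary equivariant \(\LieG_+\)-valued term to the soldering form plus a partial principal connection, and none of that requires integrability. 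The claim that the \v{C}ap--Slovak algorithm ``applies verbatim'' is overstated, though: its normalization steps differentiate the soldering form along \(V_M\), and for a non-integrable subbundle the torsion and curvature are only defined modulo the ideal \(I'\), which is precisely why the lemma asserts normality only when \(V_M\) is bracket closed and why the paper detours through an integrable complement. Since the second assertion asks only for some holomorphic \(G/P\)-geometry, your stripped-down construction suffices, but you should state it as such rather than as the full algorithm.
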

\begin{proof}
If \(V_M=TM\) then this is just lemma~\vref{lemma:Cap.Schichl.Goncharov}.
Next, suppose that \(M\) is a product \(M=X \times Y\) of complex manifolds, and that \(V_M=TF\) is the tangent bundle of the foliation \(F\) whose leaves are \(\left\{x\right\} \times Y\) for \(x \in X\).
Then we carry out the same algorithm as \v{C}ap and Schichl \cite{Cap/Schichl:2000}, but carrying around a point \(x \in X\) with us as we do.
Moreover the algorithm specifies a choice of projective connection (for any projective factors) using a choice of some additional local data.
An arbitrary foliation is locally a product, and our result is local, so the proof for \(V_M\) the tangent bundle of any holomorphic foliation is immediate.

Next suppose that we have an arbitrary holomorphic subbundle \(V_M \subset TM\).
Since our problem is local, we can suppose that \(M=M_1 \times M_2\), splitting the tangent bundle into \(T=T^1 \oplus T^2\) with \(T^j_{\pr{m_1,m_2}} = T_{m_j} M_j\), so that \(V=V_M\) is complementary to \(T^1\) at every point.
Make the obvious linear identification \(V_m \to T^2_m\).
Use this to push the first order structure on \(V\) to a first order structure on \(T^2\).
Use the above process to locally generate a \(G/P\)-geometry \(E \to M\) with Cartan connection \(\omega'\).
Locally trivialize \(E = M_1 \times M_2 \times P\).
If \(v \in V_E\), write \(v=\left(v_1,v_2,A\right) \in T^1 \oplus T^2 \oplus TP\) and define \(\omega\) by \(\omega(v)=\omega'(0,v_2,A)\).
\end{proof}

\begin{corollary}\label{corollary:glue}
Suppose that \(G/P\) is a cominiscule variety.
Suppose that \(V_M \subset TM\) is a holomorphic vector bundle on a complex manifold \(M\) bearing a holomorphic \(G_0\)-structure.
There is a holomorphic principal \(P\)-bundle \(E \to M\) and a holomorphic map \(E \to FV_M\) with image equal to the \(G_0\)-structure, so that for any local biholomorphism \(X \to M\), every holomorphic \(G/P\)-geometry on \(X\) with the pullback \(G_0\)-structure as its underlying first order structure has bundle canonically holomorphically isomorphic to the pullback of \(E\).
\end{corollary}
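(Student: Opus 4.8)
The plan is to build \(E\) by gluing the local Cartan bundles supplied by lemma~\ref{lemma:normal.space} along the canonical bundle isomorphisms of theorem~\ref{theorem:Difference} (equivalently, the last sentence of lemma~\ref{lemma:normal.space}), and then to extract the universal property from the uniqueness in that theorem.

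First I would choose an open cover \(\{U_\alpha\}\) of \(M\) on each member of which the given holomorphic \(G_0\)-structure is the underlying first order structure of some holomorphic \(G/P\)-geometry \(\pi_\alpha \colon E_\alpha \to U_\alpha\) with Cartan connection \(\omega_\alpha\); this exists by lemma~\ref{lemma:normal.space}. On an overlap \(U_{\alpha\beta} = U_\alpha \cap U_\beta\) the two geometries \(E_\alpha\) and \(E_\beta\) have the same underlying first order structure, hence the same fundamental, splitting and barnacle tensors, so theorem~\ref{theorem:Difference} (or the last sentence of lemma~\ref{lemma:normal.space}) furnishes a unique holomorphic bundle isomorphism \(\varphi_{\alpha\beta} \colon E_\alpha|_{U_{\alpha\beta}} \to E_\beta|_{U_{\alpha\beta}}\) over \(U_{\alpha\beta}\) with \(\varphi_{\alpha\beta}^*\omega_\beta = \omega_\alpha + S_{\alpha\beta}\,\sigma\) for a holomorphic \(P\)-equivariant \(S_{\alpha\beta}\) with \(\tortr S_{\alpha\beta}=0\). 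Since \(S_{\alpha\beta}\,\sigma\) is \(\LieP\)-valued, the \(\LieG_-\)-component of \(\varphi_{\alpha\beta}^*\omega_\beta\) is still \(\sigma\), so \(\varphi_{\alpha\beta}\) also intertwines the underlying first order structure maps \(E_\alpha \to \frm{V_M}\) and \(E_\beta \to \frm{V_M}\).

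Next I would check the cocycle condition, which is where the uniqueness enters. On a triple overlap the composite \(\varphi_{\beta\gamma}\circ\varphi_{\alpha\beta}\) pulls \(\omega_\gamma\) back to \(\omega_\alpha + (S_{\alpha\beta} + S_{\beta\gamma}\circ\varphi_{\alpha\beta})\,\sigma\), whose coefficient is holomorphic, \(P\)-equivariant and traceless (the trace \(\tortr\) is linear and commutes with the \(P\)-equivariant operation \(\varphi_{\alpha\beta}^*\)). By the uniqueness in theorem~\ref{theorem:Difference}, \(\varphi_{\beta\gamma}\circ\varphi_{\alpha\beta} = \varphi_{\alpha\gamma}\), and \(\varphi_{\alpha\alpha} = \operatorname{id}\); so \(\{(U_\alpha, E_\alpha, \varphi_{\alpha\beta})\}\) is descent data for a holomorphic principal \(P\)-bundle \(E \to M\) with canonical identifications \(E_\alpha \cong E|_{U_\alpha}\), and the maps \(E_\alpha \to \frm{V_M}\) glue to a holomorphic map \(E \to \frm{V_M}\) with image the given \(G_0\)-structure.

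For the universal property I would work locally on \(X\). Given a local biholomorphism \(f\colon X \to M\) and a holomorphic \(G/P\)-geometry \(E' \to X\), Cartan connection \(\omega'\), whose underlying first order structure is the \(f\)-pullback of the \(G_0\)-structure, I would cover \(X\) by opens \(W\) with \(f(W) \subset U_{\alpha(W)}\). Over \(W\), the pullback \(f^*E_{\alpha(W)}\) with Cartan connection \(f^*\omega_{\alpha(W)}\) is again a holomorphic \(G/P\)-geometry (pullback of a Cartan geometry by a local biholomorphism is a Cartan geometry, with pulled-back underlying first order structure), and its underlying first order structure agrees with that of \(E'|_W\); so theorem~\ref{theorem:Difference} supplies a unique isomorphism \(\psi_W \colon E'|_W \to f^*E_{\alpha(W)}|_W \cong f^*E|_W\) differing from the two Cartan connections by a traceless obstruction times \(\sigma\). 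On \(W \cap W'\) both \(\psi_{W'}\) and \((f^*\varphi_{\alpha(W),\alpha(W')})\circ\psi_W\) are isomorphisms \(E'|_{W\cap W'} \to f^*E|_{W\cap W'}\) of that same type (pullback by \(f\) and composition with \(\psi_W\) preserve tracelessness), hence coincide by uniqueness, and the \(\psi_W\) glue to the desired canonical isomorphism \(E' \cong f^*E\).

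The main obstacle is the bookkeeping underlying every uniqueness step: one must confirm that ``two \(G/P\)-geometries with a common \(G_0\)-structure differ by an obstruction \(S\) with \(\tortr S = 0\)'' is a relation closed under composition and under pullback by local biholomorphisms, so that the canonical isomorphisms of theorem~\ref{theorem:Difference} satisfy the cocycle identity. No geometric input beyond theorem~\ref{theorem:Difference} and lemma~\ref{lemma:normal.space} is needed; the only care required is tracking \(P\)-equivariance, in particular that \(\tortr\) commutes with pulling back functions because both respect the \(P\)-action.
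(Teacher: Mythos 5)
Your proposal is correct and follows the same route as the paper: construct the local bundles \(E_\alpha\) from lemma~\ref{lemma:normal.space} and glue them by the unique isomorphisms of theorem~\ref{theorem:Difference}, with the cocycle identity and the universal property both extracted from the uniqueness statement there. The paper's proof is a one-line version of exactly this argument; your verification that tracelessness of the obstruction is preserved under composition and pullback is the detail it leaves implicit.
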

\begin{proof}
We construct bundles \(E_{\alpha}\) locally by the previous corollary, and then we (must) glue them together as in theorem~\vref{theorem:Difference}.
\end{proof}
Cover \(M\) in open sets \(U_{\alpha}\), each equipped with a holomorphic \(G/P\)-geometry \(E_{\alpha} \to U_{\alpha}\) for which the \(G_0\)-structure is the underlying first order structure.
Let \(U_{\alpha \beta}=U_{\alpha} \cap U_{\beta}\), and if \(\phi \colon X \to M\) is any bundle, write \(X_{\alpha}\) to mean \(\phi^{-1}U_{\alpha}\) and \(X_{\alpha \beta}\) to mean \(\phi^{-1}U_{\alpha \beta}\).
Over \(U_{\alpha \beta}\), identify \(E_{\alpha}\) with \(E_{\beta}\) as in corollary~\vref{corollary:glue} to make a bundle \(E\), arranging that
\[
\omega^{\beta} = \omega^{\alpha} + S^{\alpha \beta} \sigma
\] 
so that 
\(
S^{\alpha \beta}
\)
is a holomorphic section of \(\Sch_{\alpha \beta}\).
The obstruction \(S=\pr{S^{\alpha \beta}}\) is a 1-cocycle \(S \in \Cohom{1}{M,\Sch}\).
Since each \(\omega^{\alpha}\) is a holomorphic partial connection on \(E_{\alpha} \times_P G\) over \(V_M\), the \(S^{\alpha \beta}\) are differences between holomorphic partial connections on \(E \times_P G\).
Thus the linear embedding \(\Sch_0 \to \pr{\LieG/\LieP} \otimes \LieG \) induces a sheaf morphism \(\Sch \to V_M^* \otimes \ad\pr{E \times_P G}\) which takes the obstruction cocycle \(S\) to the Atiyah class \(a\pr{M,V_M,E\times_P G}\), giving the obstruction to constructing a holomorphic partial connection on \(E \times_P G\) over \(V_M\).

\begin{lemma}\label{lemma:S.obstruction}
Suppose that \((G,X)\) is a cominiscule variety.
A holomorphic \(G_0\)-structure \(B\) on a holomorphic vector bundle \(V_M \subset TM\) is the underlying first order structure of a holomorphic \((G,X)\)-geometry if and only if the obstruction of that structure vanishes in \(\Cohom{1}{M,\Sch}\). 
If the obstruction vanishes, then the space of all holomorphic \((G,X)\)-geometries with \(B\) as underlying first order structure, modulo any isomorphism which is the identity on \(M\), is a complex affine space modelled on the complex vector space \(\Cohom{0}{M,\Sch}\).
\end{lemma}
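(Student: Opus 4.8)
The plan is to transport the entire question into \v{C}ech cohomology of the holomorphic vector bundle $\Sch = E \times_P \Sch_0$ attached to $B$, using Theorem~\ref{theorem:Difference} together with the construction of the canonical $P$-bundle $E \to M$ and of the obstruction cocycle $S = (S^{\alpha\beta})$ carried out just above. First I would confirm that the class $[S] \in \Cohom{1}{M,\Sch}$ depends only on $B$ and not on the cover or the local geometries $E_\alpha \to U_\alpha$: two systems of local geometries over a common cover are canonically identified, over each $U_\alpha$, by the converse-and-uniqueness part of Theorem~\ref{theorem:Difference}, so that $\tilde\omega^\alpha = \omega^\alpha + T^\alpha\sigma$ with $T^\alpha \in \Cohom{0}{U_\alpha,\Sch}$; this changes the cocycle by the coboundary of $(T^\alpha)$, and passing to a refinement changes nothing.

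For the ``only if'' direction I would start from a global holomorphic $(G,X)$-geometry $E' \to M$ with underlying first order structure $B$. Its fundamental, splitting and barnacle tensors are then those determined by $B$ (Lemma~\ref{lemma:IdentityComponent}), so Corollary~\ref{corollary:glue}, applied to the identity of $M$, canonically identifies the bundle of $E'$ with $E$; this identification restricts over each $U_\alpha$ to the canonical identification of $E'|_{U_\alpha}$ with $E_\alpha$ coming from Theorem~\ref{theorem:Difference}, and under it $\omega' = \omega^\alpha + S^\alpha\sigma$ for a unique $S^\alpha \in \Cohom{0}{U_\alpha,\Sch}$. Comparing the two local expressions for $\omega'$ on $U_{\alpha\beta}$ gives $S^{\alpha\beta} = S^\alpha - S^\beta$, so $[S]=0$. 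For the ``if'' direction I would, given $[S]=0$, pick $S^\alpha \in \Cohom{0}{U_\alpha,\Sch}$ with $S^{\alpha\beta} = S^\alpha - S^\beta$ and replace $\omega^\alpha$ by $\omega^\alpha + S^\alpha\sigma$: by the converse part of Theorem~\ref{theorem:Difference} each of these is again a Cartan connection on $E_\alpha$ with underlying first order structure $B$, and the choice of the $S^\alpha$ makes them agree on overlaps, so they glue to a global Cartan connection on $E$ --- a global $(G,X)$-geometry with underlying first order structure $B$.

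For the affine-space statement, assume $[S]=0$ and fix one resulting global geometry $E_0 = (E,\omega_0)$ as a base point. Any holomorphic $(G,X)$-geometry $E' \to M$ with underlying first order structure $B$ again has the fundamental, splitting and barnacle tensors determined by $B$, so Theorem~\ref{theorem:Difference} provides a unique bundle isomorphism $E \cong E'$ with $\omega' = \omega_0 + S\sigma$ for a unique $S$, i.e.\ a unique global section $S \in \Cohom{0}{M,\Sch}$; I would let $[E'] \mapsto S$. I would then check this is a bijection from the set of such geometries modulo isomorphisms that are the identity on $M$ onto $\Cohom{0}{M,\Sch}$: well-definedness and injectivity follow from the uniqueness clause of Theorem~\ref{theorem:Difference} (two geometries are sent to the same $S$ precisely when there is an isomorphism of Cartan geometries between them covering $\operatorname{id}_M$), and surjectivity from the converse clause ($\omega_0 + S\sigma$ is a Cartan connection on $E$, with underlying first order structure $B$, for every $S \in \Cohom{0}{M,\Sch}$). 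Transporting the translation action of $\Cohom{0}{M,\Sch}$ on itself through this bijection exhibits the quotient as an affine space modelled on $\Cohom{0}{M,\Sch}$; it is \emph{affine} rather than linear precisely because the bijection depends on the chosen base point $\omega_0$.

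I expect the only real obstacle to be organisational rather than mathematical: one must verify that the several ``canonical'' identifications --- of $E'|_{U_\alpha}$ with $E_\alpha$, of the glued $E$ with $E'$, and the \v{C}ech sign conventions --- are mutually compatible. Each of these is forced by the uniqueness assertions in Theorem~\ref{theorem:Difference}, and there is no analytic content beyond the local normal form $\omega' = \omega + S\sigma$ already established there, so the hard part is simply keeping the bookkeeping straight.
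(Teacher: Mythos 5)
Your proposal is correct and follows essentially the route the paper intends: the paper states this lemma without a separate proof, treating it as an immediate consequence of the obstruction-cocycle construction together with Theorem~\ref{theorem:Difference}, Lemma~\ref{lemma:normal.space} and Corollary~\ref{corollary:glue}, and your argument just makes the \v{C}ech bookkeeping (well-definedness of the class, splitting the cocycle to glue a global Cartan connection, and the base-point-dependent bijection with \(\Cohom{0}{M,\Sch}\)) explicit. No gaps; the compatibility of the various canonical identifications is indeed forced by the uniqueness clause of Theorem~\ref{theorem:Difference}, as you note.
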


For example, if \(M\) is a Stein manifold then any holomorphic \(G_0\)-structure on \(M\) is induced by a holomorphic \(G/P\)-geometry, since the cohomology groups in dimension 1 or higher of any coherent sheaf on any Stein manifold all vanish.

If \(G/P\) has no projective factors, then the obstruction of any holomorphic \(G_0\)-structure vanishes, because the uniqueness of the normal holomorphic \((G,X)\)-geometry gives a canonical global choice of a normal \((G,X)\)-geometry, and the obstruction 1-cocycle class in \(H^1\) is precisely the obstruction to such a choice.
Therefore for any rational homogeneous variety \(G/P\), the obstruction of any holomorphic \(G_0\)-structure is precisely identified with the sum of the obstructions of the projective factors: if these vanish, we can then put a product normal holomorphic \(G/P\)-geometry on the manifold.

\subsection{The Schwarzian derivative and projective connections on foliations}

If \(Z=f(z)\) is a local biholomorphism between open sets in the complex plane, the  \emph{Schwarzian derivative} is 
\[
s(f)=\pr{\frac{Z'''(z)}{Z'(z)}-\frac{3}{2}\pr{\frac{Z''(z)}{Z'(z)}}^2} \, dz^2.
\]
On the other hand, if \(Z=f(z)\) is a local biholomorphism between open sets of \(\C{p}\) for some \(p \ge 2\), then let \(J=\det Z'(z)\) and the \emph{Schwarzian derivative} is
\[
s(f)=\pr{\frac{\partial_{ij}Z^{\ell}}{\partial_k Z^{\ell}}
- \frac{\delta^k_j \partial_i J}{p+1} 
- \frac{\delta^k_i \partial_j J}{p+1} 
} \, dz^i \otimes dz^j \otimes \partial_k. 
\]
Gunning \cite{Gunning:1978} pp. 48--50 proves that \(Z=f(z)\) is a projective linear transformation just when \(s(f)\) vanishes.
More generally, if \((Z,W)=f(z,w)\) is a local biholomorphism between open sets of \(\C{p} \times \C{q}\), and \(f\) takes the level sets of \(w\) to the level sets of \(W\), we use the same expression to define \(s(f)\) but viewing \(Z\) as if it were a function of \(z\) only.
Then \(s(f)=0\) just when \((Z,W)=f(z,w)\) has \(Z(z,w)\) a holomorphic projective transformation of \(z\) for each fixed \(w\).

Suppose that \(F\) is a holomorphic nonsingular foliation of a complex manifold \(M\).
A Frobenius coordinate chart on \(M\) is a coordinate chart \((z,w)\) defined on a Stein domain in \(M\) so that each leaf of \(F\) intersects the coordinate chart in a union of level sets of \(w\).
If \((z,w)\) and \((Z,W)\) are any two Frobenius coordinate charts, on the overlap where both are defined we can compute \(s(f)\) where \((Z,W)=f(z,w)\).
Pick Frobenius coordinate charts \(\pr{z_{\alpha},w_{\alpha}}\) on open sets \(U_{\alpha}\) covering \(M\), so that any finite intersection of these open sets is a Stein domain.
Write the overlaps of charts as \(\pr{z_{\beta},w_{\beta}} = f_{\alpha \beta}\pr{z_{\beta},w_{\beta}}\) and let \(s_F^{\alpha \beta} = s\pr{f_{\alpha \beta}}\).
By Gunning's result, there is a flat holomorphic projective connection on \(M\) just when there are Frobenius charts \(\pr{z_{\alpha},w_{\alpha}}\) whose domains cover \(M\) so that \(s^{\alpha \beta}=0\).

We can ask for much less than a flat holomorphic projective connection.
Let \(TF\) be the tangent bundle of \(F\), i.e. the bundle of tangent vectors tangent to the leaves of \(F\).
If we take any tensor products of \(TF\) and \(T^*F\), we write \(\otimes_0\) to mean the subbundle of all tensors that are traceless in all possible ways, corresponding to an irreducible representation of the general linear group.
If the leaves of \(F\) are 1-dimensional, then \(s_F\) determines a 1-cocycle \(s_F \in \Cohom{1}{M,-2 \, TF}\).
On the other hand, if the leaves of \(F\) have dimension \(p \ge 2,\) then 
\(s_F\) determines a 1-cocycle \(s_F \in \Cohom{1}{M,TF \otimes_0 \Sym{2}{T^*F}}\).
The vanishing of \(s_F\) as a 1-cocycle is of course a weaker condition than requiring that \(s_F\) vanish on each overlap of coordinate charts, as it only requires that \(s_F\) equal the differences on overlaps of a local section on each chart.
We can also consider \(s_F\) as a 1-cocycle in larger sheaves than \(TF \otimes_0 \Sym{2}{TF}^*\).
Molzon and Mortensen \cite{Molzon/Mortensen:1996} p. 3024 construct a holomorphic projective connection explicitly in coordinates out of any local choice of 0-cochain in \(TF \otimes \Sym{2}{TF}^*\) with \(s_F\) as coboundary, but now viewing \(s_F\) as a 1-cocycle in \(TF \otimes \Sym{2}{TF}^*\) rather than in \(TF \otimes_0 \Sym{2}{TF}^*\).
(The same computations work for foliations as for manifolds, so we can make the obvious reinterpretations of the work of Molzon and Mortensen.)
They also prove that the family of holomorphic projective connections on a given complex manifold is parameterized by the 0-cycles in that sheaf.
However, on p. 3019 they show explicitly in coordinates how to alter a 0-cochain in \(TF \otimes \Sym{2}{TF}^*\) with coboundary \(s_F\) to obtain a 0-cochain in \(TF \otimes_0 \Sym{2}{TF}^*\) with the coboundary \(s_F\).
Their construction of a holomorphic projective connection then yields a holomorphic normal projective connection.
Therefore under the splitting 
\[
TF \otimes \Sym{2}{TF}^* = T^*F + \pr{TF \otimes_0 \Sym{2}{TF}^*},
\]
the 1-cocycle \(s_F\) viewed as valued in the left hand side splits into two pieces, but the first vanishes, so \(s_F\) is valued in \(TF \otimes_0 \Sym{2}{TF}^*\).
Warning: Molzon and Mortensen use the term \emph{normal} differently than the definition that we have used, since they don't require that their 0-cochain be traceless. 
We use the same definition as \cite{Calderbank/Diemer:2001,Cap/Schichl:2000} which is the standard definition today.
One sees easily from the explicit expression of Molzon and Mortensen that the obstruction \(s=\pr{s^{\alpha \beta}}\) between the Cartan connections on overlaps defined earlier is exactly the Schwarzian derivative cocycle written explicitly above together with a complicated expression in the first derivative of the Schwarzian derivative.

\begin{lemma}\label{lemma:obstruction.Schwarzian}
The Schwarzian derivative 1-cocycle \(s_F\) of a holomorphic foliation \(F\) on a complex manifold \(M\) vanishes just when the obstruction 1-cocycle \(S(TF)\) obstructing a holomorphic projective connection vanishes.
A holomorphic foliation \(F\) on a complex manifold \(M\) admits a holomorphic projective connection just when it admits a normal holomorphic projective connection, and the space of normal holomorphic projective connections is then a complex affine space modelled on \(\Cohom{0}{M,TF \otimes_0 \Sym{2}{TF}^*}\).
\end{lemma}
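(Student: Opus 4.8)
The plan is to run everything through the cominiscule variety $(G,X)=(\PSL{n+1,\C{}},\Proj{n})$, where $n$ is the leaf dimension of $F$, since the holomorphic $(G,X)$-geometries on $TF$ are precisely the holomorphic projective connections on $F$, and then to compare the paper's obstruction cocycle $S(TF)\in\Cohom{1}{M,\Sch}$ with the classical Schwarzian cocycle $s_F$.

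First I would record the elementary point that $G_0\cong\GL{n,\C{}}$ acts on $\LieG/\LieP\cong\C{n}$ as the standard representation, so a holomorphic $G_0$-structure on $TF$ is just the full frame bundle $\frm{TF}$: it always exists, and the only obstruction to realizing it as the underlying first order structure of a holomorphic $(G,X)$-geometry is $S(TF)$. Since $TF$ is bracket closed, Lemmas~\vref{lemma:normal.space} and~\vref{lemma:S.obstruction} then give two things at once: $F$ admits a holomorphic projective connection if and only if $S(TF)=0$, and when $S(TF)=0$ the holomorphic projective connections on $F$, taken modulo isomorphisms inducing the identity on $M$, form a complex affine space over $\Cohom{0}{M,\Sch}$.

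Next I would invoke the Molzon--Mortensen description, reinterpreted for foliations exactly as in the paragraphs preceding the lemma: the holomorphic projective connections on $F$ correspond to $0$-cochains $\phi$ valued in $TF\otimes\Sym{2}{T^*F}$ with $s_F$ as coboundary, so one exists if and only if $s_F$ is a coboundary there; moreover their p.~3019 normalization turns any such $\phi$ into a $0$-cochain valued in the subsheaf $TF\otimes_0\Sym{2}{T^*F}$ with the same coboundary $s_F$ and normal associated projective connection, and conversely a traceless $0$-cochain with coboundary $s_F$ produces a normal holomorphic projective connection. Chaining these gives the equivalences ``$F$ admits a projective connection $\iff$ $s_F$ is a coboundary in $TF\otimes\Sym{2}{T^*F}$ $\iff$ $s_F$ is a coboundary in $TF\otimes_0\Sym{2}{T^*F}$ $\iff$ $F$ admits a normal projective connection''. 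Comparing with the previous step identifies the vanishing of $s_F$ as a class in $\Cohom{1}{M,TF\otimes_0\Sym{2}{T^*F}}$ with the vanishing of $S(TF)$ in $\Cohom{1}{M,\Sch}$, which is the first assertion, and also yields the ``projective $\iff$ normal'' clause.

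For the affine-space statement I would use that, by Molzon and Mortensen, the normal holomorphic projective connections on $F$ correspond bijectively to the $0$-cochains valued in $TF\otimes_0\Sym{2}{T^*F}$ with coboundary $s_F$; two such cochains differ by a $0$-cochain with vanishing coboundary, that is, by a global section of $TF\otimes_0\Sym{2}{T^*F}$, and adding back any such global section yields another cochain with coboundary $s_F$. So, once nonempty, the set of normal holomorphic projective connections on $F$ is a complex affine space modelled on $\Cohom{0}{M,TF\otimes_0\Sym{2}{T^*F}}$. (One can also extract this from Theorem~\vref{theorem:Difference}: two normal projective connections with the same frame bundle differ by a unique obstruction $S$ with $\tr S=0$, and for the projective model normality of both forces $S$ into the $P$-submodule of $\Sch_0$ corresponding to $TF\otimes_0\Sym{2}{T^*F}$.) I expect the one genuinely delicate step to be the identification already used in the text, namely matching $S^{\alpha\beta}\in\Sch$ with the Schwarzian cocycle $s_F^{\alpha\beta}$ (plus a term in its first derivative) and checking that for normal local connections the trace part of $S^{\alpha\beta}$ drops out, so that $S(TF)$ really lives in $TF\otimes_0\Sym{2}{T^*F}$; this is where the explicit coordinate formulas of Molzon and Mortensen, and the fact that their normalization kills exactly the trace, are needed. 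The rest is assembling Lemmas~\vref{lemma:normal.space} and~\vref{lemma:S.obstruction}, Theorem~\vref{theorem:Difference}, and the recalled Molzon--Mortensen parameterization.
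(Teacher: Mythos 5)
Your argument is correct and follows essentially the same route as the paper: both rest on the Molzon--Mortensen construction and parameterization of holomorphic projective connections by $0$-cochains with coboundary $s_F$ (and its traceless normalization), combined with the paper's obstruction machinery (lemma~\vref{lemma:S.obstruction} and theorem~\vref{theorem:Difference}). The only cosmetic differences are that you derive ``$S(TF)=0 \iff s_F$ vanishes'' by chaining both through the existence of a projective connection rather than via the explicit cocycle relation between $S^{\alpha\beta}$ and the Schwarzian, and you get the affine structure on normal connections from the Molzon--Mortensen parameterization where the paper instead invokes theorem~\vref{theorem:Difference} together with the Kobayashi--Nagano characterization of normality.
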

\begin{proof}
If this Schwarzian derivative 1-cocycle \(s_F\) vanishes, then there is a holomorphic projective connection constructed by Molzon and Mortensen, and so the obstruction 1-cocycle vanishes.
On the other hand, by the work of Molzon and Mortensen described above, the Schwarzian derivative 1-cocycle \(s_F=s(TF)\) is a quotient of the obstruction 1-cocycle, so if the obstruction 1-cocycle vanishes then the Schwarzian derivative 1-cocycle vanishes as well.
Consequently, a holomorphic normal projective connection appears precisely when a holomorphic projective connection appears.
Once we have one such, say \(\omega\) on \(E \to M\), then any other, say \(\omega'\) on \(E' \to M\), will have its bundle identified, \(E' \cong E\), by adding the requirement that \(\omega'=\omega+S \sigma\) with \(\tortr{S}=0\) as in theorem~\vref{theorem:Difference}.
But to have \(\omega'\) also normal requires precisely the vanishing of the antisymmetric and trace parts of \(S\): see Kobayashi and Nagano \cite{Kobayashi/Nagano:1964} p. 227.
\end{proof}

It is not known how to relate the classical Schwarzian derivative 1-cocycle to the obstruction 1-cocycle of a vector bundle if the vector bundle is not bracket closed.

\subsection{The Atiyah class and projective connections on foliations}

If \(E \to M\) is any holomorphic principal bundle, and \(V \subset TM\) is any holomorphic subbundle, the Atiyah class \(a(M,V,E) \in \Cohom{1}{M,V^* \otimes \ad(E)}\) is the obstruction to the construction of a partial holomorphic connection on \(E\) over \(V\), given explicitly as the cocycle consisting of differences between choices of local holomorphic partial connection.
The Atiyah class \(a(M,V,W)\) of a vector bundle \(W\) is defined to be the Atiyah class of the associated principal bundle.
In particular, if \(V=TF\) is the tangent bundle of a holomorphic foliation, then the Atiyah class of \(a(M,TF,TF)\) is represented by the cochain
\(
\frac{\partial_{jk} Z^{\ell}}{\partial_i Z^{\ell}}
\)
and the Atiyah class of \(a\pr{M,TF,\det T^*F}\) is represented by 
\[
\pd{\log \det \pd{Z}{z}}{z^i}.
\]
Therefore if \(F\) has rank \(p\) then
\[
s_F = a(M,TF,TF) 
- \frac{I}{p+1} \otimes a\pr{M,TF,\det TF^*}
- a\pr{M,TF,\det TF^*} \otimes\frac{I}{p+1},
\]
represents the Schwarzian derivative in more familiar terms for algebraic geometers.
There is no known analoguous interpretation of the Schwarzian derivative of a rank 1 foliation, or of a holomorphic subbundle of \(TM\) which is not bracket closed.

\subsection{Vanishing of all other obstructions}

\begin{proposition}\label{proposition:Schwarzian.is.all}
Suppose that \(M\) is a complex manifold, and \(X=G/P\) a cominiscule variety.
Split up \(X=\prod_i X_i\) into irreducibles, with \(G=\prod_i G_i\) and \(P=\prod_i P_i\).
Then \(M\) has a holomorphic \(\pr{G,X}\)-geometry if and only if \(M\) has
\begin{enumerate}
\item
a holomorphic vector bundle \(V \subset TM\) and
\item
a splitting \(V=\bigoplus_i V_i\) so that the rank of \(V_i\) is the dimension of \(X_i\) and
\item
for each \(X_i\) which is not a projective factor, holomorphic tensors on \(V_i\) pointwise complex-linearly isomorphic to the fundamental tensor and barnacle tensor of \(X_i\) and
\item
for each projective factor \(X_i\), the obstruction 1-cocycle of \(V_i\) vanishes; if it should happen that \(V_i\) is bracket-closed then this occurs just when the classical Schwarzian derivative cocycle of \(V_i\) vanishes.
\end{enumerate}
In particular, if all of the \(V_i\) associated to projective factors \(X_i\) are bracket closed, then there is a holomorphic \(\pr{G,X}\)-geometry inducing the underlying holomorphic \(G_0\)-structure just when there is a \emph{normal} holomorphic \(\pr{G,X}\)-geometry inducing the underlying holomorphic \(G_0\)-structure.
\end{proposition}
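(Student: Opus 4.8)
The plan is to translate the proposition, factor by factor, into the dictionary established in the previous sections between holomorphic \(\pr{G,X}\)-geometries and holomorphic \(G_0\)-structures. First I would recall the two ends of that dictionary: by lemma~\ref{lemma:normal.space} (equivalently lemma~\ref{lemma:Cap.Schichl.Goncharov}) the underlying first order structure of any holomorphic \(\pr{G,X}\)-geometry is a holomorphic \(G_0\)-structure on a holomorphic subbundle \(V \subset TM\), and conversely, by lemma~\ref{lemma:S.obstruction}, a holomorphic \(G_0\)-structure on \(V\) underlies a holomorphic \(\pr{G,X}\)-geometry exactly when its obstruction in \(\Cohom{1}{M,\Sch}\) vanishes. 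Then I would unwind what a holomorphic \(G_0\)-structure on \(V\) is: since \(G = \prod_i G_i\), \(P = \prod_i P_i\) and hence \(G_0 = \prod_i G_{0,i}\) with \(P_i = G_{0,i}G_{+,i}\) as in the grading, while \(\LieG/\LieP = \bigoplus_i \LieG_i/\LieP_i\), a holomorphic \(G_0\)-structure on \(V\) is the same datum as a holomorphic splitting \(V = \bigoplus_i V_i\) with the rank of \(V_i\) equal to \(\dim X_i\), together with a holomorphic \(G_{0,i}\)-structure on each \(V_i\). Applying lemma~\ref{lemma:IdentityComponent} to the irreducible cominiscule variety \(G_i/P_i\), a holomorphic \(G_{0,i}\)-structure on \(V_i\) is precisely a choice of holomorphic tensors on \(V_i\) pointwise complex-linearly isomorphic to the fundamental and barnacle tensors of \(G_i/P_i\) --- \emph{except} when \(X_i\) is a projective factor \(\Proj{n_i}\), where \(G_{0,i} \to \GL{\LieG_i/\LieP_i}\) is onto all of \(\GL{n_i,\C{}}\), so that a \(G_{0,i}\)-structure on the rank-\(n_i\) bundle \(V_i\) is nothing but the full frame bundle \(FV_i\) and carries no data at all. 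Thus items~(1)--(3) of the proposition are exactly equivalent to the existence of a holomorphic \(G_0\)-structure \(B\) on \(V\); this step is pure bookkeeping, made rigorous by lemma~\ref{lemma:IdentityComponent}.

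With this dictionary in hand, the forward implication runs as follows. Starting from a holomorphic \(\pr{G,X}\)-geometry on \(M\), I take its underlying \(G_0\)-structure \(B\), which by the previous paragraph supplies items~(1)--(3). For each projective factor \(X_i\), I pass to the factor geometry \(E/K_i\) on \(V_i\) --- the \(\pr{G_i,X_i}\)-geometry induced by the splitting of the model --- which is a holomorphic projective connection on \(V_i\); this exhibits \(FV_i\) as the underlying first order structure of a holomorphic \(\pr{G_i,X_i}\)-geometry, so lemma~\ref{lemma:S.obstruction} forces the obstruction \(1\)-cocycle of \(V_i\) to vanish, and that is item~(4). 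When moreover \(V_i\) is bracket closed, lemma~\ref{lemma:obstruction.Schwarzian} reinterprets this vanishing as the vanishing of the classical Schwarzian derivative cocycle of \(V_i\).

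For the converse I would run the dictionary backwards. Items~(1)--(3) produce the holomorphic \(G_0\)-structure \(B\) on \(V\), and by lemma~\ref{lemma:S.obstruction} it suffices to show that the obstruction of \(B\) in \(\Cohom{1}{M,\Sch}\) vanishes. Here I invoke the decomposition of the obstruction explained just before lemma~\ref{lemma:S.obstruction}: it is the sum over \(i\) of the obstructions of the factors \(G_i/P_i\). Each non-projective summand vanishes, because uniqueness of the normal \(\pr{G_i,X_i}\)-geometry (lemma~\ref{lemma:normal.space}) furnishes a canonical global normal geometry on \(V_i\); the summand of a projective factor \(X_i\) is exactly the obstruction \(1\)-cocycle of the corresponding \(V_i\), which vanishes by item~(4). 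Hence the total obstruction vanishes and \(M\) admits a holomorphic \(\pr{G,X}\)-geometry.

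Finally, for the ``in particular'' clause: one direction is trivial, a normal holomorphic \(\pr{G,X}\)-geometry being in particular a holomorphic one. For the other, assume every \(V_i\) belonging to a projective factor is bracket closed, and suppose \(M\) carries a holomorphic \(\pr{G,X}\)-geometry with underlying \(G_0\)-structure \(B = \bigoplus_i B_i\). As in the forward implication, each projective \(V_i\) then acquires a holomorphic projective connection and therefore, being a foliation, a \emph{normal} one by lemma~\ref{lemma:obstruction.Schwarzian}; each non-projective \(V_i\) carries its unique normal \(\pr{G_i,X_i}\)-geometry with underlying structure \(B_i\). Taking the product of all these geometries, its underlying first order structure is again \(B\) (one checks that the induced fundamental, barnacle and splitting tensors are the direct sums of those on the factors), and it is normal because, for a product model, both the curvature and the boundary operator \(\delta\) decompose as direct sums indexed by the factors, so that \(\delta\)-closedness of the product curvature reduces to \(\delta\)-closedness of the curvature of each factor geometry. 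I expect the main obstacle to be not any single calculation but keeping this bookkeeping honest at the two places where the projective factors behave differently: recognising through lemma~\ref{lemma:IdentityComponent} that for a projective factor the \(G_{0,i}\)-structure is vacuous, and recognising that normality is only well behaved for bracket-closed subbundles --- which is exactly why the ``in particular'' clause must require the projective \(V_i\) to be foliations, since it is precisely bracket-closure that lets lemma~\ref{lemma:obstruction.Schwarzian} upgrade a holomorphic projective connection to a normal one on each such factor.
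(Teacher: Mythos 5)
Your proposal is correct and follows essentially the same route as the paper: split the obstruction 1-cocycle by the irreducible factors of the model, dispose of the non-projective factors via the canonical (normal) geometries of lemma~\ref{lemma:normal.space}, handle the projective factors through lemmas~\ref{lemma:S.obstruction} and~\ref{lemma:obstruction.Schwarzian}, and assemble the product geometry. The extra bookkeeping you supply (the equivalence of items (1)--(3) with a holomorphic \(G_0\)-structure via lemma~\ref{lemma:IdentityComponent}, and the normality of the product geometry) is left implicit in the paper but is consistent with it.
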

\begin{proof}
Once we have the bundle \(V\) and the splitting, the splitting \(X=\prod_i X_i\) splits the obstruction 1-cocycle into a sum \(S = \sum_i S_i\), \(S_i \in \Cohom{1}{M,\Sch_i}\).
If each of these \(S_i\) vanish, then there is a \(\pr{G_i,X_i}\)-geometry on \(V_i\), and if this holds for every \(S_i\), then the product geometry is a \(\pr{G,X}\)-geometry on \(V\).
So without loss of generality, \(X\) is irreducible, and then the existence of an inducing \(\pr{G,X}\)-geometry holds as explained above.

If \(X_i\) is not a projective factor, then we have already ensured that the \(\pr{G_i,X_i}\)-geometry is normal by using lemma~\vref{lemma:normal.space} to construct the geometry.
For each projective factor \(X_i\), if \(V_i\) is bracket-closed, then the existence of a holomorphic projective connection (which we have hypothesized) is equivalent to the vanishing of the obstruction 1-cocycle \(S_i\) by lemma~\ref{lemma:S.obstruction}, which is equivalent to the vanishing of the classical Schwarzian derivative by lemma~\vref{lemma:obstruction.Schwarzian}, which is the obstruction to the existence of a normal holomorphic projective connection (as explained by Molzon and Mortensen \cite{Molzon/Mortensen:1996}).
\end{proof}

\section{Splitting of the tangent bundle}

In this section only we will allow parabolic geometries whose model is not necessarily effective.
If \(P \subset Q \subset G\) are parabolic subgroups of a complex semisimple Lie group \(G\), and \(E \to M\) is a holomorphic \(G/Q\)-geometry, then let \(\hat{M} = E/P\) and then \(E \to \hat{M}\) is a holomorphic \(G/P\)-geometry, called the \emph{lift} of \(E \to M\).
Conversely, we say that a holomorphic \(G/P\)-geometry \emph{drops} to a holomorphic \(G/Q\)-geometry if it is isomorphic to the lift of a \(G/Q\)-geometry.

\begin{theorem}\label{theorem:drop.bracket}
Suppose that \((G,X)\) is a complex homogeneous space, \(G\) acting effectively, holomorphically and transitively.
Suppose that \(G\) preserves a splitting of the tangent bundle of \(X\).
Then clearly any holomorphic \(\pr{G,X}\)-geometry on any complex manifold \(M\) induces a corresponding splitting of the tangent bundle of \(M\).
If \(M\) is a smooth complex projective variety, then the splitting arises from a local product structure on \(M\) or else the geometry drops to a lower dimensional holomorphic Cartan geometry with model \((G,Y)\), where \(X \to Y\) is a \(G\)-equivariant holomorphic fiber bundle.
\end{theorem}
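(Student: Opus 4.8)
The plan is the following. A \(G\)-invariant splitting of \(TX\) is the same data as an \(H\)-module decomposition \(\LieG/\LieH = W_1 \oplus W_2\), where \(X = G/H\); for a holomorphic \((G,X)\)-geometry \(\pi \colon E \to M\) with Cartan connection \(\omega\), setting \(\sigma = \omega + \LieH\) and splitting \(\sigma = \sigma_1 + \sigma_2\) according to the \(W_i\) produces the induced splitting \(TM = V_1 \oplus V_2\), where \(V_i\) is the subbundle on which \(\sigma\) is \(W_i\)-valued. First I would record the relevant algebraic invariants: choosing an \(H\)-stable lift \(\widetilde W_i \subset \LieG\) of \(W_i\) and setting \(\LieQ_i = \LieH + \widetilde W_i\), the Lie bracket of \(\LieG\) composed with the projection \(\LieG \to \LieG/(\LieH+\widetilde W_i) \cong W_{3-i}\) gives an \(H\)-equivariant map \(\beta_i \colon \Lm{2}{W_i} \to W_{3-i}\), and \(\LieQ_i\) is a Lie subalgebra precisely when \(\beta_i = 0\). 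Using the structure equation \(d\omega = -\tfrac12[\omega,\omega] + K\), a direct computation shows that the obstruction to involutivity of \(V_i \subset TM\) is the holomorphic tensor \(\mathcal{I}_i = \beta_i - \kappa_i\), where \(\kappa_i\) is the \(W_{3-i}\)-valued component of the torsion (the \(\LieG/\LieH\)-part of \(K\)) restricted to \(\Lm{2}{V_i}\); on the flat model this recovers \(\mathcal{I}_i = \beta_i\). For a \emph{reducible} model \(X = X_1 \times X_2\) one has \(\beta_1 = \beta_2 = 0\), since the two factor groups commute.

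Next I would dispose of the local-product alternative. If both \(V_1\) and \(V_2\) are involutive, they are complementary holomorphic foliations of \(M\), and the holomorphic Frobenius theorem exhibits \(M\), near each point, as a product of the two local leaf spaces with the \(V_i\) as the two vertical bundles --- that is exactly a holomorphic local product structure inducing the given splitting. This step uses nothing about projectivity.

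The substance is the remaining case, in which, say, \(V_2\) is not involutive. Here I would look upstairs on \(E\): consider the rank \(\dim\LieQ_2\) distribution \(\mathcal{D} \subset TE\) spanned by the constant vector fields \(\vec{A}\) for \(A \in \LieQ_2\), which projects onto \(V_2\). Since the Cartan connection value of \([\vec{A},\vec{B}]\) is \([A,B] - K(\vec{A},\vec{B})\) and \(K\) annihilates directions tangent to the \(H\)-orbits, \(\mathcal{D}\) is involutive exactly when \(\LieQ_2\) is a subalgebra (i.e.\ \(\beta_2 = 0\)) and \(\kappa_2 \equiv 0\), equivalently \(\mathcal{I}_2 \equiv \beta_2\). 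When this holds with \(\beta_2 = 0\), \(\mathcal{D}\) integrates to a holomorphic foliation of \(E\) whose leaves are the orbits of the connected subgroup \(Q_2 \subseteq G\) with Lie algebra \(\LieQ_2\); here I would invoke projectivity to conclude that the leaf space \(\bar{M} = E/Q_2\) is a smooth projective variety and \(E \to \bar{M}\) a holomorphic principal \(Q_2\)-bundle, whence \(E \to \bar{M}\) is a holomorphic \((G,Y)\)-geometry with \(Y = G/Q_2\), the map \(X \to Y\) is the \(G\)-equivariant bundle \(G/H \to G/Q_2\), and \(E \to M\) is by construction the lift of \(E \to \bar{M}\); since \(\dim Y < \dim X\) the geometry has genuinely dropped, and iterating terminates in finitely many steps. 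The main obstacle is exhaustiveness: one must rule out, for projective \(M\), the intermediate possibility that \(V_2\) is non-involutive (\(\mathcal{I}_2 \neq 0\)) while \(\LieQ_2\) fails to be a subalgebra or \(\kappa_2 \neq 0\), so that neither alternative is literally available. I expect to handle this via the Bianchi identity together with the fact that such a residual torsion would be a non-zero holomorphic tensor heavily constrained by \(H\)-equivariance; for the reducible models that are the application of interest, where \(\beta_i = 0\), this collapses to showing that a complementary pair of holomorphic distributions coming from a product model cannot be simultaneously non-involutive on a projective variety without the geometry dropping, which I would prove by induction on the number of irreducible factors of the model, peeling off one factor at each drop.
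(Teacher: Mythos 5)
There is a genuine gap, and it sits exactly where the theorem actually needs projectivity. Your dichotomy is not closed: the case you must handle is a summand \(V_2\) that is \emph{not} bracket closed, but your proposed mechanism for dropping the geometry (integrating the distribution \(\mathcal{D}\subset TE\) spanned by the constant vector fields \(\vec{A}\), \(A\in\LieQ_2\)) requires \(\beta_2=0\) \emph{and} \(\kappa_2\equiv 0\), which by your own formula \(\mathcal{I}_2=\beta_2-\kappa_2\) forces \(V_2\) to be involutive. So the construction you describe only ever applies when the splitting is already integrable, i.e.\ in the case that was not in doubt, and the problematic case --- \(\mathcal{I}_2\neq 0\) --- is left to a hoped-for argument (``Bianchi identity plus \(H\)-equivariance'', induction on factors) that is not carried out and for which no mechanism producing a drop is identified. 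Even in the case you do treat, passing from involutivity of \(\mathcal{D}\) to a principal \(Q_2\)-bundle \(E\to E/Q_2\) with projective leaf space needs the leaves to literally be orbits of a closed subgroup and the quotient to exist, which is a properness statement you assert rather than prove.

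The paper closes the dichotomy with two external results that play no role in your outline, and they are the real content. First, a theorem of H\"oring: if a holomorphic splitting of \(TM\) on a smooth complex projective variety has a summand that is not bracket closed, then \(M\) is uniruled, hence contains rational curves. Second, a theorem of Biswas--McKay: a holomorphic Cartan geometry on a smooth projective variety drops to a lower-dimensional Cartan geometry exactly when \(M\) contains a rational curve. So non-involutivity of a summand itself forces the drop --- no involutive distribution upstairs is ever integrated --- and one iterates: the dropped geometry has the same bundle, connection and curvature, so the corresponding summand downstairs is bracket closed iff the original one is, and the process terminates either with all summands bracket closed (whence the local product structure, by Frobenius as in your second paragraph) or with the relevant factor eliminated (a \(G/G\)-geometry, forcing vanishing curvature and hence bracket closure after all). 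Without results of this kind, relating non-integrability to rational curves and rational curves to dropping, your plan cannot rule out the ``intermediate possibility'' you yourself flag, so the proposal as it stands does not prove the theorem.
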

\begin{proof}
The Cartan geometry on \(M\) drops to a lower dimensional Cartan geometry just when \(M\) contains a rational curve \cite{Biswas/McKay:2010a} p. 2 theorem 2.
If a holomorphic splitting on a smooth projective variety \(M\) has a summand which is not bracket closed, then \(M\) is uniruled, i.e. covered by rational curves in an algebraic family \cite{Hoering:2007} p. 466 theorem 1.3, and therefore the geometry drops and the relevant summand in the splitting maps to a new summand in the new splitting.
If the new summand is bracket closed, then so is the original one, and vice versa, since the two geometries have the same curvature tensor.
Therefore the geometry continues to drop until we have eliminated the relevant factor, i.e. it becomes a \(G/G\)-geometry, ensuring that its curvature vanishes, which forces bracket closure in the original geometry.
\end{proof}

\begin{theorem}
Suppose that \(\pr{G,X}\) is a rational homogeneous variety which splits into \(X=\prod_i X_i\), \(G=\prod_i G_i\) into rational homogeneous varieties \(\pr{G_i,X_i}\).
Then clearly any holomorphic \(\pr{G,X}\)-geometry on any complex manifold \(M\) induces a corresponding splitting of the tangent bundle of \(M\).
If \(M\) is a smooth complex projective variety then \(M\) is locally a product of complex manifolds with these subbundles as tangent bundles.
\end{theorem}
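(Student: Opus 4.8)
The plan is to reduce everything to Theorem~\ref{theorem:drop.bracket} and its proof, applied not only to the given splitting but to each of its coarsenings, and then to invoke the purely differential-topological fact that a tangent bundle decomposition all of whose partial sums are integrable comes locally from a product chart. Since each \(\pr{G_i,X_i}\) is a rational homogeneous variety, \(G=\prod_i G_i\) acts on \(X=\prod_i X_i\) preserving the splitting \(TX=\bigoplus_i TX_i\), and for every subset \(I\) of the index set the partial sum \(TX_I:=\bigoplus_{i\in I}TX_i\) is a \(G\)-invariant, bracket-closed subbundle of \(TX\), bracket-closed simply because \(X\) is literally the product \(X_I\times X_{I^c}\). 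A holomorphic \(\pr{G,X}\)-geometry \(E\to M\) transports each \(TX_I\) to a holomorphic subbundle \(V_I=\bigoplus_{i\in I}V_i\subset TM\), and \(TM=V_I\oplus V_{I^c}\) is then a \(G\)-invariant splitting of \(TM\) of the kind to which Theorem~\ref{theorem:drop.bracket} applies.

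Next I would show that every \(V_I\) is bracket-closed on \(M\), running the argument in the proof of Theorem~\ref{theorem:drop.bracket} for the two-block splitting \(TM=V_I\oplus V_{I^c}\). If \(V_I\) is not bracket-closed then \(M\) is uniruled, so the geometry drops; since \(G=\prod_i G_i\), every \(G\)-equivariant quotient \(X\to Y\) again respects the product, \(Y=\prod_i Y_i\) with each \(\pr{G_i,Y_i}\) a rational homogeneous quotient of \(\pr{G_i,X_i}\), so the dropped geometry is again a product-model geometry, now on a strictly lower-dimensional smooth projective variety, and the obstruction to bracket-closedness of \(V_I\) --- a component of the curvature tensor --- survives the drop, so the corresponding block downstairs is again not bracket-closed and its base is again uniruled. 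Each nontrivial drop strictly lowers the dimension, so this cannot go on forever: one is forced either to reach a local product structure at some stage of the tower (whence that block is bracket-closed, and lifting back up, so is \(V_I\), since a geometry and its lift have the same curvature), or to reach a geometry over a one-point base, which is flat, forcing all blocks to be bracket-closed --- either way a contradiction. Hence every \(V_I\) is bracket-closed.

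Finally, \(TM=\bigoplus_i V_i\) is a holomorphic direct sum decomposition all of whose partial sums \(V_I\) are involutive, and such a decomposition comes locally from a product chart: one proves this by induction on the number of factors, using the classical statement that a complementary pair of involutive distributions is straightened to coordinate distributions by a single chart, and noting that after splitting off \(V_1\) each remaining \(V_j\) descends to the complementary factor because \(V_1\oplus V_j\) is involutive, so that the inductive hypothesis applies to the complementary factor. This exhibits \(M\) locally as \(\prod_i M_i\) with \(TM_i=V_i\), which is the assertion.

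The main obstacle is the middle step. One must check carefully that non-integrability of a distinguished block of the splitting is exactly a component of the curvature tensor that is carried isomorphically along each drop and its lift, and that the bases appearing in the tower of drops remain smooth projective varieties (via a relatively ample bundle coming from the flag structure of the fibers), so that the uniruledness criterion of \cite{Hoering:2007} and the dropping criterion of \cite{Biswas/McKay:2010a} keep applying; none of this is new, but it is a bookkeeping extension of the proof of Theorem~\ref{theorem:drop.bracket} that must be made precise. The first and last steps are routine, the last being entirely classical.
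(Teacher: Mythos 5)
Your proposal is correct and follows essentially the same route as the paper: the paper likewise proves bracket-closure of the summands by iterating the dropping argument of theorem~\ref{theorem:drop.bracket} (the Biswas--McKay dropping criterion plus H\"oring's uniruledness theorem), handles fully collapsed factors \(Y_i=G_i\) by the vanishing of the curvature in those directions (the point your ``carried isomorphically along each drop'' bookkeeping must ultimately rest on, via semibasicness of the dropped geometry's curvature), and then concludes the local product structure. Your only additions are refinements the paper leaves implicit --- running the argument for every partial sum \(V_I\) and spelling out the classical Frobenius induction --- which is indeed what is needed for the local product chart once there are three or more factors.
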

\begin{proof}
By theorem~\vref{theorem:drop.bracket} the geometry drops.
The dropped geometry is still a holomorphic parabolic geometry, so we can repeat until all of the subbundles in the resulting splitting are bracket closed, say for a \((G,Y)\)-geometry where \(Y=\prod_i Y_i\), splitting into just as many varieties, except that perhaps some of these \(Y_i\) are not rational homogeneous varieties anymore; this happens precisely when \(Y_i=G_i\).
The original \((G,X)\)-geometry and the resulting \((G,Y)\)-geometry have the same total space and Cartan connection.
For each factor \(Y_i=G_i\), the curvature vanishes and the \((G,X)\)-geometry is a product of a  model geometry \(\pr{G_i,Y_i}\) and a geometry modelled on the remaining factors.
A product geometry has bracket-closed subbundles corresponding to the factors in the product.
So we can assume that \(Y_i \ne G_i\) and that \(\pr{G_i,Y_i}\) is a rational homogeneous variety.
But then the summands in the splitting of the tangent bundle of the original \(\pr{G,X}\)-geometry are each bracket closed clearly just when the same is true for the \(\pr{G,Y}\)-geometry, since they project to those.
\end{proof}

\begin{theorem}
Suppose that \(M\) is a smooth complex projective variety and \(M\) has split tangent bundle \(TM=\bigoplus V_i\).
For factor \(V_i\) of rank 1, there is a holomorphic projective connection on \(V_i\).
In other words the obstruction 1-cocycle of \(V_i\) vanishes.
\end{theorem}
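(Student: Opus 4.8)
The plan is to reduce, using the results already in place, to constructing a single holomorphic projective connection on the rank-one foliation determined by $V_i$, and then to produce one according to whether the canonical bundle of that foliation is pseudo-effective. To begin: a rank-one holomorphic subbundle $V_i\subset TM$ is automatically bracket closed, since for a local frame $X$ of $V_i$ and another local section $Y=fX$ we have $[X,Y]=(Xf)\,X\in V_i$. So $V_i=TF$ for a nonsingular holomorphic foliation $F$ of $M$ by curves, and the model $\Proj{1}=\PSL{2,\C{}}/P$ is irreducible, equal to its own unique (projective) factor. By lemma~\ref{lemma:S.obstruction}, proposition~\ref{proposition:Schwarzian.is.all} and lemma~\ref{lemma:obstruction.Schwarzian}, the following are equivalent: the obstruction $1$-cocycle of $V_i$ in $\Cohom{1}{M,\Sch}$ vanishes; the classical Schwarzian derivative $1$-cocycle $s_F\in\Cohom{1}{M,V_i^{\otimes-2}}$ vanishes; $F$ admits a holomorphic projective connection; $F$ admits a holomorphic normal projective connection (and in that case the holomorphic projective connections on $F$ form a complex affine space modelled on $\Cohom{0}{M,V_i^{\otimes-2}}$). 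Hence it is enough to exhibit one holomorphic projective connection on $F$.

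Suppose first that $K_F=V_i^{\otimes-1}$ is not pseudo-effective. Then, by the structure theory of foliations on projective manifolds, $F$ is algebraically integrable with rational leaves; since $F$ is nonsingular this exhibits $F$ as a fibration $M\to B$ whose fibers are its leaves, each a smooth rational curve. As $\Proj{1}$ carries a unique holomorphic projective connection, the associated relative projective connection is a holomorphic projective connection on $F$ defined over all of $M$, and this case is finished.

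Now suppose $K_F=V_i^{\otimes-1}$ is pseudo-effective. The strategy here is to promote the splitting $TM=V_i\oplus W_i$ into a product structure adapted to $V_i$: after a finite \'etale cover and a further splitting, $F$ should become the foliation by curve factors of a product $C\times N$ with $V_i$ the pullback of $TC$, so that any holomorphic projective connection on the compact Riemann surface $C$---these always exist---pulls back to one on $F$. In the borderline case in which $V_i$ is numerically trivial (leaves elliptic, or $F$ a linear foliation of an abelian factor, and so on) one argues instead that $V_i$ is a flat line bundle, which directly carries a flat, hence holomorphic, projective connection along $F$.

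The main obstacle is precisely this pseudo-effective case. One must control, and ultimately rule out, leaves of $F$ that are not algebraic---one cannot simply uniformize leaf by leaf, since the uniformizing projective structure of a Riemann surface does not vary holomorphically in families---and then leverage the direct-summand hypothesis, together with pseudo-effectivity of $K_F$, to obtain a genuine local product decomposition of $M$ adapted to $V_i$: a Beauville-type statement which becomes available once $V_i$ is known to be algebraically integrable. I expect the cleanest route is to establish that algebraic integrability first and then reduce to the product case; alternatively, one could try to show directly that $s_F$ is a coboundary by a \v{C}ech computation in Frobenius charts adapted to the splitting $TM=V_i\oplus W_i$, following the explicit formulas of Molzon and Mortensen \cite{Molzon/Mortensen:1996}.
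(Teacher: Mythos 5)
Your reduction to exhibiting a single holomorphic projective connection on the rank-one foliation $F$ with $TF=V_i$ is fine (rank one is indeed automatically bracket closed, and lemma~\ref{lemma:obstruction.Schwarzian}/lemma~\ref{lemma:S.obstruction} convert existence into vanishing of the obstruction cocycle). But the proof is not complete: the case you call ``$K_F$ pseudo-effective'' --- which includes the central examples, such as quotients of the bidisk by irreducible lattices --- is only a sketch, and the strategy you outline there is flawed. You propose that after a finite \'etale cover $F$ becomes the curve-factor foliation of a product $C\times N$ with $C$ a \emph{compact} Riemann surface; for an irreducible bidisk quotient no finite \'etale cover splits as a product, so this cannot work as stated. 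The splitting exists only on the universal cover, and then one cannot use ``any'' projective connection on the curve factor --- one needs a projective structure invariant under the full deck group, which acts on the product, not on the curve alone.

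The missing ingredient is the theorem of Brunella, Vit\'orio Pereira and Touzet, and with it the right dichotomy, which is on the complementary summand $V_2$ (with $TM=V_1\oplus V_2$, $V_1=V_i$ of rank one), not on pseudo-effectivity of $K_F$. If $V_2$ is bracket closed, their theorem splits the universal cover as $\tilde M=C\times N$ with $\pi^*V_1=TC$ and $C$ simply connected, hence the disk, the plane or $\Proj{1}$; each of these carries a canonical flat projective structure preserved by \emph{all} of its automorphisms, so the structure is deck-invariant and descends to a holomorphic projective connection on $V_1$. If $V_2$ is not bracket closed, the same theorem shows $V_1$ is tangent to the fibers of a holomorphic $\Proj{1}$-fiber bundle, where the fiberwise projective connection is clear (this is close to your non-pseudo-effective case, but you get it from a cited splitting theorem rather than from foliation structure theory plus an unproved claim that a regular foliation with rational leaves is a fibration). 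As it stands, your argument proves the statement only in the non-pseudo-effective case, and even there leans on assertions (algebraic integrability of all leaves, the fibration structure) that would need justification.
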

\begin{proof}
Without loss of generality we can assume \(TM=V_1 \oplus V_2\) and that \(V_1\) has rank 1.
If \(V_2\) is bracket closed, then Brunella, Vit\'orio Pereira and Touzet (\cite{Brunella/VitorioPereira/Touzet:2006} theorem 1.1 p. 241) tell us that the universal covering space \(\pi \colon \tilde{M} \to M\) splits as \(\tilde{M}=C \times N\) where \(C\) is a connected and simply connected Riemann surface, with \(\pi^* V_1 = TC\)  and \(\pi^* V_2 = TN\).
We can assume that \(C\) is the unit disk in the complex plane, the complex plane, or the projective line.
In any case, the automorphisms of \(\tilde{M}\) each preserve the standard holomorphic projective connection, so induce a projective connection on \(V_1\).

On the other hand, if \(V_2\) is \emph{not} bracket closed, then the same theorem of Brunella, Vit\'orio Pereira and Touzet shows that \(V_1\) is tangent the to fibers of a holomorphic \(\Proj{1}\)-fiber bundle, for which the existence of a holomorphic projective connection on \(V_1\) is clear.
\end{proof}

\section{Holonomy groups}

\begin{lemma}\label{lemma:FiniteType}
Suppose that \(M\) is a connected complete simply connected K\"ahler manifold.
Write \(M = M_1 \times M_2 \times \dots \times M_s\) as a product of K\"ahler manifolds, \(s \ge 1\), which are themselves not products of lower dimensional K\"ahler manifolds.
Pick a point \(m = \left(m_1, m_2, \dots, m_s\right) \in M\).
Let \(T = T_m M\) and \(T_i = T_{m_i} M_i\).
Then each \(M_i\) has some holonomy group \(H_i\) at \(m_i\), and the holonomy group of \(M\) at \(m\) is \(H = H_1 \times H_2 \times \dots \times H_s\), with \(H_i\) acting trivially on \(T_j\) for \(j \ne i\).
Each group \(H_i\) preserves a complex structure and a Hermitian metric on \(T_i\), so
complexifies to a subgroup \(H_i^{\mathbb{C}}\) of the complex linear group on \(T_i\).
Either
\begin{enumerate}
 \item \(H_i\) is the holonomy group of a unique irreducible compact Hermitian symmetric space or,
 \item \(H_i^{\C{}}\) is of infinite type;
\end{enumerate}
and both possibilities occur precisely for \(H_i=\Un{n}\).
\end{lemma}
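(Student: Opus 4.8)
The plan is to reduce the statement to three classical facts: the de~Rham decomposition theorem, the de~Rham--Berger classification of Riemannian holonomy groups, and \'E.~Cartan's classification of the irreducible complex linear Lie algebras of infinite type. First I would apply the de~Rham theorem: since \(M\) is complete, connected and simply connected, it splits isometrically and holomorphically as a product \(M = M_1 \times \dots \times M_s\) of Kähler manifolds, none of which is a product of lower-dimensional Kähler manifolds (the de~Rham factors of a Kähler manifold are again Kähler, the parallel complex structure commuting with the holonomy representation); then the holonomy group of \(M\) at \(m\) is \(H_1 \times \dots \times H_s\), each \(H_i\) acts trivially on \(T_j\) for \(j \ne i\), each \(H_i\) lies in the unitary group of the Hermitian vector space \(T_i\), and complexifying puts \(H_i^{\C{}}\) in the complex linear group of \(T_i\). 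Apart from the flat factors \(M_i = \C{}\) (with \(H_i\) trivial), which I would record separately, each \(M_i\) is a complete simply connected irreducible Kähler manifold, so everything comes down to the holonomy of such an \(M_i\).

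Next I would apply the holonomy classification. For a complete simply connected irreducible Kähler manifold \(M_i\), either \(M_i\) is locally symmetric, in which case \(H_i\) is the isotropy representation of an irreducible compact Hermitian symmetric space --- equivalently of its noncompact dual, which has the same holonomy --- uniquely determined as a subgroup of the complex linear group of \(T_i\), so alternative (1) holds; or \(M_i\) is not locally symmetric, and then Berger's theorem together with \(H_i \subset \Un{n_i}\) forces \(H_i\) to be \(\Un{n_i}\), \(\SU{n_i}\) or \(\Sp{n_i/2}\), the Kähler, Calabi--Yau and hyperkähler holonomies. Since the isotropy representation of an irreducible Hermitian symmetric space has a one-dimensional central torus, and \(\SU{n}\) (\(n \ge 2\)) and \(\Sp{k}\) have no such torus, neither of the latter is a Hermitian symmetric isotropy representation; so when \(M_i\) is not locally symmetric and \(H_i \ne \Un{n_i}\), alternative (1) fails.

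Then I would invoke Cartan's list: the irreducible complex linear Lie algebras of infinite type are exactly \(\LieGL{n,\C{}}\), \(\LieSL{n,\C{}}\) (\(n \ge 2\)), \(\mathfrak{sp}\pr{2k,\C{}}\) and \(\mathfrak{sp}\pr{2k,\C{}} \oplus \C{}\). In the non-locally-symmetric case, \(H_i^{\C{}}\) is \(\GL{n,\C{}}\), \(\SL{n,\C{}}\) or \(\Sp{2k,\C{}}\), hence of infinite type, so alternative (2) holds. In the locally symmetric case, the first prolongation of the isotropy algebra of an irreducible Hermitian symmetric space vanishes --- so the algebra is of finite type --- in every case except the complex projective spaces \(\Proj{n}\) and their noncompact duals, whose complexified isotropy algebra is \(\GL{n,\C{}}\), of infinite type; so in the locally symmetric case alternative (2) holds exactly when \(H_i = \Un{n}\). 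Combining the two cases: exactly one of (1), (2) holds unless \(H_i = \Un{n}\), in which case both hold, since \(\Proj{n}\) has holonomy \(\Un{n}\) and \(\GL{n,\C{}}\) is of infinite type --- which is the assertion; the flat factors \(M_i = \C{}\) with trivial holonomy are the one degenerate case, recorded separately.

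I expect the main obstacle to be the bookkeeping in the last paragraph: one must check, running through the classification of irreducible Hermitian symmetric spaces, that the complexified isotropy representation lands in Cartan's infinite-type list precisely for the projective spaces --- equivalently that its first prolongation vanishes for the Grassmannians, quadrics, spinor varieties, Lagrangian Grassmannians, and the two exceptional spaces --- and, on the other side, that the Berger holonomies \(\SU{n}\) and \(\Sp{k}\) never coincide with a Hermitian symmetric isotropy representation; once this is checked, the uniqueness claim in (1) is automatic away from \(H_i = \Un{n}\).
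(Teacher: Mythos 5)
Your argument is correct and follows essentially the same route as the paper: de Rham splitting, Berger's classification of the non-symmetric irreducible K\"ahler holonomies, and the finite/infinite-type classification of the complexified linear isotropy algebras; you are in fact somewhat more careful than the paper in verifying the locally symmetric side of the dichotomy and in setting aside the flat factors. One small correction: for a non-projective irreducible Hermitian symmetric space the \emph{first} prolongation of the complexified isotropy algebra \(\LieG_0 \subset \LieGL{\LieG_-}\) does not vanish --- it equals \(\LieG_+\) --- rather it is the \emph{second} prolongation that vanishes, so these algebras are of finite type of order two; your other formulation of the check, that among these isotropy representations only \(\GL{n,\C{}}\) on \(\C{n}\) appears in Cartan's infinite-type list, is the correct justification.
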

\begin{proof}
By Berger's classification of the holonomy groups of Riemannian manifolds \cite{Joyce:2007} p. 53 theorem 3.4.1 the holonomy groups \(H_i\) of complete simply connected K\"ahler manifolds which are not products and not locally symmetric must be among
\[
\Un{n}, \SU{n}, \Sp{n}, \Sp{n} \Un{1}.
\]
Their complexifications are of infinite type. 
Again by Berger's classification, only \(\Un{n}\) occurs as the holonomy of a compact Hermitian symmetric space, and then only for \(\Proj{n}\).
The rest of the result is the deRham splitting theorem \cite{Joyce:2007} p. 47 theorem 3.2.7.
\end{proof}

\section{Cominiscule geometries on K{\"a}hler--Einstein manifolds}

If \(M\) is a compact complex manifold with \(\Chern[1]{M} < 0\) then \(M\) admits a K{\"a}hler--Einstein metric \(g\), which is unique up to constant rescaling \cite{Aubin:1978,Yau:1977,Yau:1978}.
Suppose that \(M\) is a compact complex manifold admitting a K{\"a}hler--Einstein metric.
Let \(T=TM\) be the holomorphic tangent bundle.
Every holomorphic section \(s\) of \(\left(T^* \otimes T\right)^{\otimes}\) is parallel \cite{Bochner/Yano:1953} p. 142, \cite{Kobayashi:1980b} p. 7.

\begin{corollary}\label{corollary:ParallelTensors}
If \(M\) is a compact complex manifold with a K{\"a}hler--Einstein metric and a cominiscule geometry \(E \to M\), then the fundamental tensor, splitting tensors and barnacle tensor of \(E \to M\) are parallel for the K{\"a}hler--Einstein metric.
\end{corollary}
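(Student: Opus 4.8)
The plan is to deduce the corollary directly from the Bochner-type statement recalled just above: on a compact complex manifold carrying a K\"ahler--Einstein metric, every holomorphic section of a tensor power of \(T^*M \otimes TM\) is parallel (and, since the type here is balanced, this holds regardless of the sign of the Einstein constant, because the Ricci contributions in the relevant Weitzenb\"ock formula cancel identically). Since a cominiscule geometry on \(M\) is by definition a Cartan geometry on \(V_M = TM\), the associated vector bundle is \(V = TM\), so it suffices to check, one tensor at a time, that each of the fundamental tensor, the splitting tensors, and the barnacle tensor is a holomorphic section of some tensor power of \(T^*M \otimes TM\) --- equivalently, that each is built out of equally many copies of \(TM\) and of \(T^*M\). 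Holomorphy of all three tensors has already been established, and each is a \(P\)-invariant tensor on \(\LieG/\LieP\) (on which \(P\) acts through \(G_0\), by lemma~\ref{lemma:IdentityComponent}); only the tensor type remains to be tracked.

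For the fundamental tensor this is immediate from its construction: \(\tau\) is a holomorphic section of \(T^* \otimes T \otimes T^* \otimes T = (T^* \otimes T)^{\otimes 2}\), hence parallel. For the splitting tensors, each projector \(\varpi^i\) is a holomorphic section of \(T^* \otimes V_i \subset T^* \otimes T\), hence parallel; equivalently the whole splitting \(TM = \bigoplus_i V_i\) is parallel for the K\"ahler--Einstein metric.

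The one place that needs any unwinding is the barnacle tensor \(\alpha\). If \(G/P\) has no Grassmannian factor of the form \(\Gr{p}{\C{2p}}\) and no even-dimensional quadric factor \(Q^{2n-2}\), then \(\alpha = 0\) and there is nothing to do. Otherwise I would return to examples~\ref{example:Grassmannian} and~\ref{example:Hyperquadric}, where \(\alpha\) is written out explicitly on each such factor: in the Grassmannian case it is a projection operator on \(\LieGL{\LieG_-}\), hence an element of \((\LieG_- \otimes \LieG_-^*) \otimes (\LieG_-^* \otimes \LieG_-)\); in the quadric case it is a projection onto a summand of \(\Lm{n-1}{\LieG_-}\), hence --- via the inclusion \(\Lm{n-1}{\LieG_-} \hookrightarrow \bigotimes^{n-1} \LieG_-\) --- an element of a tensor power of \(\LieG_- \otimes \LieG_-^*\). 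Transporting along the isomorphism \(\LieG/\LieP \cong TM\) supplied by the Cartan connection (so that \(\LieG_-\) becomes \(TM\) and \(\LieG_-^*\) becomes \(T^*M\)), \(\alpha\) becomes a holomorphic section of a tensor power of \(TM \otimes T^*M\); reordering the tensor factors, which preserves parallelness, places it in a tensor power of \(T^*M \otimes TM\), so it is parallel, and summing over the irreducible factors of \(G/P\) gives that the full barnacle tensor is parallel. I do not expect a genuine obstacle here: the analytic input is entirely the cited Bochner principle, so the only real work is this bookkeeping of tensor types, the subtlest step being to keep track, in the quadric case, of the wedge-power inclusion so as to confirm that \(\alpha\) sits in a balanced tensor bundle built from \(TM\) and \(T^*M\).
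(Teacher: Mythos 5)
Your proposal is correct and is exactly the paper's (implicit) argument: the corollary is stated as an immediate consequence of the Bochner principle quoted just before it, since the fundamental tensor, splitting projectors and barnacle tensor are all holomorphic sections of tensor powers of \(T^*M \otimes TM\). Your extra bookkeeping for the barnacle tensor (via the inclusion of the wedge power into the tensor power, as in the quadric example) is just a spelled-out version of what the paper leaves to the reader.
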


As noticed by Catanese and Di Scala \cite{Catanese/DiScala:2013}, if a holomorphic tensor in  \(\left(T^* \otimes T\right)^{\otimes}\) is isomorphic to a fundamental tensor at just one point of a compact complex manifold \(M\) with \(\Chern[1]{M}<0\), then, since it is parallel, it is isomorphic at every point.
In particular, it determines an underlying holomorphic first order structure, so locally arises from a holomorphic cominiscule geometry.

\begin{proposition}\label{proposition:ConstantSectional}
Suppose that \(M\) is a connected compact K{\"a}hler manifold with \(\Chern{1}<0\) and split tangent bundle \(TM=T_1 \oplus T_2\) and suppose that \(T_1\) has rank \(n_1 \ge 2\) and
let
\[
\eta = 2\pr{n_1+1} \, \Chern[2]{T_1} \, \Chern[1]{T_1}^{n_1-2}
- n_1 \, \Chern[1]{T_1}^{n_1}.
\]
Then \(\eta=0\) just when all of the following occur: 
\begin{enumerate}
\item 
the universal covering space \(\tilde{M}\) of \(M\) splits into a product \(\tilde{M}=M_1 \times M_2\) of complex manifolds and 
\item
the \(T_i\) pull back to the \(T M_i\), and 
\item 
every K\"ahler--Einstein metric on \(M\) pulls back to a a product metric on \(M_1 \times M_2\) and a constant holomorphic sectional curvature metric on \(M_1\).
\end{enumerate}
\end{proposition}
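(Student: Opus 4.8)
The strategy is to extract from the de Rham splitting theorem the part of the conclusion that holds with no hypothesis on \(\eta\), and then to recognize that the single remaining assertion --- constant holomorphic sectional curvature of \(M_1\) --- is exactly what the characteristic quantity \(\eta\) measures, via a pointwise Chern--Weil identity available on Kähler--Einstein manifolds. (I read ``\(\eta=0\)'' as vanishing of the cohomology class; the argument will in fact show that the Kähler--Einstein Chern--Weil representative of \(\eta\) vanishes identically.)

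First, since \(\Chern[1]{M}<0\), the manifold \(M\) carries a Kähler--Einstein metric \(g\), unique up to scale, with \(\operatorname{Ric}=\lambda g\), \(\lambda<0\). The holomorphic projection \(TM\to T_i\) along \(TM=T_1\oplus T_2\) is a holomorphic section of \(T^*\otimes T\), hence parallel for \(g\) (as recalled at the start of this section). Therefore \(T_1,T_2\) are parallel, \(J\)-invariant, mutually orthogonal, and --- being parallel for the torsion-free Levi-Civita connection --- integrable distributions. As \(M\) is compact, hence complete, the de Rham splitting theorem \cite{Joyce:2007}, applied on the universal cover, produces a Kähler product \(\tilde M=M_1\times M_2\) with \(\pi^*T_i=TM_i\) and \(\pi^*g\) a product metric; this is conclusion (1), conclusion (2), and the product--metric part of (3). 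Since \(g\), and hence \(\operatorname{Ric}\), splits, each \(M_i\) is Kähler--Einstein with the same negative constant \(\lambda\). It therefore remains only to prove that \(\eta=0\) if and only if \(M_1\) has constant holomorphic sectional curvature.

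Because \(T_1\) is parallel, the curvature of its induced Chern connection is the restriction \(R|_{T_1}\), which on \(\tilde M\) is pulled back from the curvature of \(TM_1\); hence the Chern--Weil forms \(\Chern[k]{T_1}\) computed from \(g\) are pulled back from \(M_1\) and lie in \(\Lambda^{k,k}T_1^*\subset\Omega^{k,k}(M)\), and the Einstein condition on \(M_1\) gives \(\Chern[1]{T_1}=\tfrac{\lambda}{2\pi}\,\omega_1\) \emph{as forms}, where \(\omega=\omega_1+\omega_2\) is the splitting of the Kähler form into its \(T_1\)- and \(T_2\)-parts (each closed). Thus the \(g\)-representative of \(\eta\) equals \(\bigl(\tfrac{\lambda}{2\pi}\bigr)^{n_1-2}\) times \(\bigl(2(n_1+1)\Chern[2]{T_1}-n_1\,\Chern[1]{T_1}^2\bigr)\wedge\omega_1^{\,n_1-2}\), which is pulled back from \(M_1\) and is precisely the Chern--Ogiue integrand of the Kähler--Einstein manifold \(M_1\). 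Since \(n_1\ge2\), the classical pointwise identity for Kähler--Einstein metrics (Apte, Chen--Ogiue) rewrites this integrand as a nonzero real constant times \(|B|^2\,dV_{M_1}\), where \(B\) is the Bochner curvature tensor of \(M_1\); and as \(M_1\) is Einstein, \(B\equiv0\) is equivalent to \(M_1\) having constant holomorphic sectional curvature. Hence the \(g\)-representative of \(\eta\) on \(M\) equals \(c\,|B|^2\,\omega_1^{\,n_1}\) for some nonzero constant \(c\), with \(|B|^2\ge0\) a well-defined smooth function on \(M\).

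If \(M_1\) has constant holomorphic sectional curvature then \(B\equiv0\), so this representative vanishes identically and \(\eta=0\). Conversely, if \(\eta=0\), wedge the \(g\)-representative of \(\eta\) with \(\omega^{n_2}=(\omega_1+\omega_2)^{n_2}\); since \(\omega_1^{\,n_1+1}=0\) only the \(\omega_2^{\,n_2}\) term survives, so \(\eta\wedge\omega^{n_2}=c'\,|B|^2\,dV_g\) for a nonzero constant \(c'\). As \([\omega]=\tfrac{2\pi}{\lambda}\,\Chern[1]{M}\), the number \(\int_M\eta\wedge\omega^{n_2}\) is a nonzero multiple of \(\langle\eta\smile\Chern[1]{M}^{n_2},[M]\rangle=0\); hence \(\int_M|B|^2\,dV_g=0\), so \(B\equiv0\) on \(\tilde M\), i.e. \(M_1\) has constant holomorphic sectional curvature. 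This gives (3) and completes the proof. The main obstacle is the middle step: identifying and applying the correct pointwise Chern--Weil identity expressing \(2(n_1+1)c_2-n_1c_1^2\) (wedged with powers of \(\omega\)) through \(|B|^2\,dV\) on a Kähler--Einstein manifold, together with the verification that the curvature data of the parallel subbundle \(T_1\) really does reproduce the Chern forms of the Kähler--Einstein factor \(M_1\) --- both resting on the parallelism of the splitting; the cohomological pairing that upgrades ``\(\eta=0\)'' to pointwise vanishing of the representative is then routine.
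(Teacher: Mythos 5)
Your proposal is correct and follows essentially the same route as the paper: the de~Rham splitting from the parallelism of the holomorphic projections gives conclusions (1), (2) and the product-metric part of (3) unconditionally, and the remaining equivalence is the Chen--Ogiue-type pointwise Chern--Weil identity for the K\"ahler--Einstein factor, converted to pointwise vanishing by integrating against a complementary power of the K\"ahler form (the paper wedges with \(\Omega_2^{n_2}\), you with \(\omega^{n_2}\) using \([\omega]\propto\Chern[1]{M}\)). The only cosmetic difference is that you quote the Apte/Chen--Ogiue identity and the Bochner tensor where the paper carries out the unitary-coframe curvature computation with its deviation tensor \(T\); your handling of the sign via ``nonzero constant times \(\int|B|^2\)'' is, if anything, slightly cleaner.
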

\begin{proof}
Because \(\Chern[1]{M} < 0\)  and \(M\) is K{\"a}hler, \(M\) admits a K{\"a}hler--Einstein metric, unique up to constant rescaling \cite{Aubin:1978,Yau:1977,Yau:1978}.
The holomorphic projections \(T \to T_i \subset T\) are parallel in the K\"ahler--Einstein metric and so the subbundles \(T_i \subset TM\) are parallel. 
By the de~Rham splitting theorem \cite{deRham:1952}, the universal covering space of \(M\) splits into a direct product
\(
\tilde{M} = M_1 \times M_2,
\)
with the metric a local direct product metric, and \(T_i\) pulls back to \(TM_i\) for each \(i\).,
Consequently, the subbundles \(T_i\) are both bracket closed.
The holonomy group of the K\"ahler--Einstein metric reduces to a product \(H=H_1 \times H_2\), with \(H_i\) the holonomy group of \(M_i\).
The complex structure is also a holomorphic tensor and so each \(H_i\) preserves the complex structure on \(T_i\).

Because the splitting is locally a product, the curvature of the K\"ahler--Einstein metric 
vanishes on orthonormal pairs of vectors \(v_1, v_2\) with \(v_i \in T_i\). 
On some open set we pick a local unitary coframing, say \(\omega_i\) for \(T_1\) and a local unitary coframing \(\omega_I\) for \(T_2\), and take the Chern connection \(\omega_{\mu \bar{\nu}}\).
The induced metrics on \(M_1\) and \(M_2\) are K\"ahler--Einstein, since the
Ricci curvature of the restriction of the metric on \(\tilde{M}\) is the restriction of the 
Ricci curvature. 

We will see that \(\eta=0\) just when the sectional curvature of the induced metric on \(T_1\) is constant, as in \cite{Chen/Oguie:1975,Kobayashi/Ochiai:1980,Kobayashi:1983}.
The Ricci tensor has components \(R_{i \barj} = R_{i \barj k \bar k}, R_{I \bar{J}} = R_{I \bar{J} K \bar{K}}\) and all other components vanish.
The scalar curvature of \(M\) is \(s=R_{i \bari} + R_{I \bar{I}}\), while that of \(M_1\) is \(s_1=R_{i \bari}\).
Of course, being K\"ahler--Einstein, \(R_{i \barj} = \lambda \, \delta_{i \barj}, R_{I \bar{J}} = \lambda \, \delta_{I \bar{J}}\).
The scalar curvature is then \(s=\left(n_1 + n_2\right) \lambda\), while \(s_1=n_1 \lambda\).
We write out the Chern forms of the subbundle \(T_1\):
\begin{align*}
\Chern[1]{T_1}
&=
\frac{\sqrt{-1}}{2 \pi}
\nabla \omega_{i \bari}.
\\
&=
\frac{\sqrt{-1}}{2 \pi} 
R_{i \barj} \omega_{\bari} \wedge \omega_j
\\
\Chern[2]{T_1}
&=
-\frac{1}{8 \pi^2}
\left( 
\nabla \omega_{i \bari}  \nabla \omega_{j \barj} 
-
\nabla \omega_{i \barj} \nabla \omega_{j \bari}
\right) 
\end{align*}
The K\"ahler form restricted to \(T_1\) is
\[
\Omega_1 = \frac{\sqrt{-1}}{2 \pi} \omega_{\bari} \wedge \omega_i.
\]
Without even using the K\"ahler--Einstein condition,  
\begin{align*}
\nabla \omega_{i \bari} \wedge \nabla \omega_{j \barj} \wedge \Omega_1^{n_1-2}
&=
\left(
R_{i \bari k \bar{l}}
\omega_{\bar{k}} \wedge \omega_{l}
\right)
\wedge
\left(
R_{j \barj p \bar{q}}
\omega_{\bar{p}} \wedge \omega_{q}
\right)
\wedge
\Omega_1^{n_1-2}
\\
&=
\left( 
  s_1^2 - R_{i \barj} R_{j \bari}
\right) \Omega_1^{n_1}.
\end{align*}
Plugging in the expressions for \(\nabla \omega\) in terms of \(R\),
we find
\begin{align*}
\Chern[1]{T_1}^2 \wedge \Omega_1^{n_1-2}
&=
\frac{1}{4 \pi^2 n_1 \left(n_1 - 1\right)}
\left(
  s_1^2-R_{i \barj} R_{j \bari}
\right)
\Omega_1^{n_1}.
\\
\Chern[2]{T_1} \wedge \Omega_1^{n_1-2}
&=
\frac{1}{8 \pi^2 \, n_1 \left(n_1-1\right)}
\left(
  R_{i \barj k \bar{l}} R_{j \bari l \bar{k}} + s_1^2 - 2 R_{i \barj} R_{j \bari} 
\right)
\Omega_1^{n_1}.
\end{align*}

The holomorphic sectional curvature on \(M_1\) is constant just when
\[
R_{i \barj k \bar{l}} = \frac{s_1}{n_1\left(n_1+1\right)}
\left( 
\delta_{i \barj} \delta_{k \bar{l}} +
\delta_{i \bar{l}} \delta_{k \bar{j}}
\right).
\]
This motivates the definition of a tensor
\[
T_{i \barj k \bar{l}}
=
R_{i \barj k \bar{l}} - \frac{s_1}{n_1\left(n_1+1\right)}
\left( 
\delta_{i \barj} \delta_{k \bar{l}} +
\delta_{i \bar{l}} \delta_{k \bar{j}}
\right).
\]
Computation gives
\[
T_{i \barj k \bar{l}} 
T_{j \bari l \bar{k}} 
=
R_{i \barj k \bar{l}}
R_{j \bari l \bar{k}}  
-
\frac{2 \, s_1^2}{n_1\left(n_1+1\right)}.
\]
In particular, the metric on \(M_1\) has
constant holomorphic sectional curvature just when
\[
 R_{i \barj k \bar{l}}
R_{j \bari l \bar{k}}  
=
\frac{2 \, s_1^2}{n_1\left(n_1+1\right)}.
\]

If we assume next the K\"ahler--Einstein condition, 
that \(R_{i \barj}=\lambda \, \delta_{i \barj}\), then
\[
\Chern[1]{T_1} = \frac{s_1}{2 \pi n_1} \Omega_1,
\]
and
\[
R_{i \barj} R_{j \bari} = \frac{s_1^2}{n_1}.
\]
Plugging this in to our Chern class expressions,
\begin{align*}
\Chern[1]{T_1}^{n_1}
&=
\left(\frac{s_1}{2 \pi n_1}\right)^{n_1} \Omega_1^{n_1}
\\
&=
\left(\frac{s_1}{2 \pi n_1}\right)^{n_1-2}
\Chern[1]{T_1}^2
\wedge
\Omega_1^{n_1-2}
\\
&=
\left(\frac{s_1}{2 \pi n_1}\right)^{n_1-2}
\frac{1}{4 \, \pi^2 \, n_1 \left(n_1-1\right)}
\left(
  s_1^2 - R_{i \barj} R_{j \bari}
\right)
\Omega_1^{n_1}.
\end{align*}
Similarly,
\[
\Chern[2]{T_1} \Chern[1]{T_1}^{n_1-2}
=
\frac{1}{8 \, \pi^2 \, n_1 \left(n_1 - 1 \right)}
\left(\frac{s_1}{2 \pi n_1}\right)^{n_1-2}
\left( 
  R_{i \barj k \bar{l}}
  R_{j \bari l \bar{k}}
  + \frac{n_1-2}{n_1} s_1^2
\right)
\Omega_1^{n_1}.
\]
Therefore 
\begin{align*}
\eta
&=
2\left(n_1+1\right)
\Chern[2]{T_1}
\Chern[1]{T_1}^{n_1-2}
-
n_1 \, \Chern[1]{T_1}^{n_1}
\\
&=
\frac{1}{4 \, \pi^2 \, n_1 \left(n_1-1\right)}
\left(
  \frac{s_1}{2 \pi n_1}
\right)^{n_1-2}
\left(
  (n+1) \, R_{i \barj k \bar{l}}
  R_{j \bari l \bar{k}}
  - \frac{2}{n} s_1^2
\right) \Omega_1^{n_1}
\\
&=
\frac{1}{4 \, \pi^2 \, n_1 \left(n_1-1\right)}
\left(
  \frac{s_1}{2 \pi n_1}
\right)^{n_1-2}
\left(
  (n+1) \, T_{i \barj k \bar{l}}
  T_{j \bari l \bar{k}}
\right) \Omega_1^{n_1}.
\end{align*}
So finally, we see that \(\eta\) is a nonnegative
differential form. In particular, \(\int_M \eta \wedge \Omega_2^{n_2} \ge 0\)
and equality occurs just when \(\eta=0\), which occurs just when \(T=0\),
which occurs just when \(M_1\) has constant sectional curvature.
\end{proof}

\begin{lemma}[McKay \cite{McKay:2007}]\label{lemma:ChernClassRelations}
Suppose that \(G\) is a complex semisimple Lie group and \(P \subset G\) is a parabolic subgroup.
Suppose that \(W_1, W_2, \dots, W_t\) are \(P\)-submodules and \(M\) is a compact K\"ahler manifold bearing a holomorphic \(G/P\)-geometry \(E \to M\).
Let \(V_i = E \times_P W_i\), and also write \(V_i\) for the corresponding vector bundles \(G \times_P W_i\) on the model.
Then every polynomial equation 
\[
0 = p\left(c\left(V_1\right),c\left(V_2\right),\dots,c\left(V_t\right)\right)
\]
in Chern classes satisfied on the model \(G/P\) must be satisfied on \(M\).
\end{lemma}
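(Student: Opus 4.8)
The plan is to prove this by Chern--Weil theory, reducing any relation on \(G/P\) to Borel's presentation of \(\Cohom{*}{G/P}\) and then exploiting the holomorphy of the Cartan connection together with Hodge theory on the compact K\"ahler manifold \(M\). First I would extend the Cartan connection \(\omega\) on the principal \(P\)-bundle \(E \to M\) to the principal \(G\)-connection \(\tilde\omega\) on \(\tilde E := E \times_P G \to M\) whose restriction along \(E \hookrightarrow \tilde E\) is \(\omega\); its curvature is the curvature \(\Omega = d\omega + \frac{1}{2}\left[\omega,\omega\right]\) of the Cartan geometry, which, because \(\omega\) is holomorphic, is a holomorphic \(\LieG\)-valued \((2,0)\)-form on \(M\) valued in \(\ad\pr{\tilde E}\).

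Next I would normalize the modules. Replacing each \(P\)-module \(W_i\) by the associated graded module for its \(\LieG_+\)-socle filtration changes neither the Chern classes of \(V_i = E \times_P W_i\) nor those of the homogeneous bundles on \(G/P\) (a filtered holomorphic vector bundle and its associated graded have equal Chern classes); this filtration is \(P\)-stable and its graded pieces are \(G_0\)-modules with trivial \(G_+\)-action, since the grading of \(\LieG\) is \(G_0\)-invariant. So I may assume each \(W_i\) is a \(G_0\)-module. Then \(V_i = \pr{E/G_+} \times_{G_0} W_i\) is associated to the underlying \(G_0\)-structure \(E/G_+\), so \(c\pr{V_i}\) is the pullback, under the classifying map \(M \to BG_0\), of the universal class \(c\pr{W_i} \in \Cohom{*}{BG_0;\mathbb{Q}}\); likewise on \(G/P\) the bundle \(G \times_P W_i\) gives the pullback of the \emph{same} universal class under the Borel map \(G/P \to BG_0\) classifying \(G/G_+ \to G/P\). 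Consequently the lemma becomes the inclusion of kernels
\[
\ker\!\pr{\Cohom{*}{BG_0;\mathbb{Q}} \to \Cohom{*}{G/P;\mathbb{Q}}}
\subseteq
\ker\!\pr{\Cohom{*}{BG_0;\mathbb{Q}} \to \Cohom{*}{M;\mathbb{Q}}}.
\]

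By Borel's description of the rational cohomology of \(G/P\), the left-hand kernel is the ideal generated by the image of \(\Cohom{>0}{BG;\mathbb{Q}}\) in \(\Cohom{*}{BG_0;\mathbb{Q}}\) under the Levi inclusion \(G_0 \hookrightarrow G\). Now \(\tilde E\) has the same characteristic classes as \(\pr{E/G_+} \times_{G_0} G\), so the pullback to \(M\) of such an element is a characteristic class of \(\tilde E\); using the connection \(\tilde\omega\), a class of positive degree \(2k\) is represented by a Weil polynomial \(\phi\pr{\Omega}\), which is a closed form of type \((2k,0)\) because \(\Omega\) has type \((2,0)\). Since the rational cohomology of \(BG\) is generated by Chern classes of complex representations, this class is also of Hodge type \((k,k)\); on a compact K\"ahler manifold a class cannot be simultaneously of type \((2k,0)\) and of type \((k,k)\) for \(k \ge 1\) unless it vanishes. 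Hence the whole ideal dies in \(\Cohom{*}{M;\mathbb{Q}}\), and any polynomial relation among the \(c\pr{\mathcal{W}_i}\) valid on \(G/P\) holds among the \(c\pr{V_i}\) on \(M\).

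I expect the main obstacle to be the middle bookkeeping: checking that the classifying map of \(\tilde E\) really factors as \(M \to BG_0 \to BG\) and that the \(\LieG_+\)-socle filtration is genuinely \(P\)-equivariant with \(G_0\)-module graded pieces, so that \(c\pr{V_i}\) lies in the image of \(\Cohom{*}{BG_0}\) in exactly the way the homogeneous Chern classes do; once this is in place, Borel's presentation and the Hodge-type dichotomy on the \((2,0)\) curvature close the argument. If one prefers to avoid classifying spaces, the same proof can be run entirely with Chern--Weil representatives --- computing \(c\pr{V_i}\) from a global smooth connection on \(E/G_+\) patched from local pullbacks of the \(\LieG_0\)-component of \(\omega\), and comparing the resulting invariant polynomials with those on the model --- and the content is identical.
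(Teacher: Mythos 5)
This lemma is not proved in the paper at all: it is imported verbatim from the cited reference \cite{McKay:2007}, so there is no internal proof to compare against. Your argument is correct and is essentially the argument of that reference: extend the holomorphic Cartan connection to a holomorphic connection on \(E\times_P G\), conclude that all real characteristic classes of that bundle vanish in positive degree on a compact K\"ahler manifold by a Hodge-type argument, and then invoke Borel's description of \(H^*(G/P;\mathbb{Q})\) as \(H^*(BP;\mathbb{Q})\) modulo the ideal generated by the positive-degree image of \(H^*(BG;\mathbb{Q})\). Two small remarks: the socle-filtration step is unnecessary, since the Chern classes of \(E\times_P W_i\) are already pulled back along \(M\to BP\simeq BG_0\) (as \(G_+\) is contractible); and your justification that the relevant classes are of type \((k,k)\) via ``generation by Chern classes of representations'' is more delicate than needed --- it is cleaner to note that these classes are real, so lying in \(H^{2k,0}\) forces them into \(H^{2k,0}\cap H^{0,2k}=0\) (equivalently, apply Atiyah's theorem that a holomorphic bundle with a holomorphic connection on a compact K\"ahler manifold has vanishing real characteristic classes).
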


\begin{corollary}\label{corollary:Chen}
Suppose that \(M\) is a compact K{\"a}hler manifold with \(\Chern{1}<0\), bearing a cominiscule geometry with model 
\[
G/P = \left(G_1/P_1\right) \times \left(G_2/P_2\right) \times \dots \times 
\left(G_s/P_s\right). 
\]
This geometry splits the tangent bundle, say as 
\[
TM = T^1 \oplus T^2 \oplus \dots \oplus T^s.
\]
The universal covering space 
\(
\pi \colon \tilde{M} \to M
\)
splits:
\[
\tilde{M}
=
\tilde{M}_1 \times \tilde{M}_2 \times \dots \times \tilde{M}_s,
\]
so that \(\pi^* T_i = T\tilde{M}_i\). 
Every K{\"a}hler--Einstein metric on \(M\) lifts to a product metric on \(\tilde{M}\),
a product of K{\"a}hler--Einstein metrics on each of these \(\tilde{M}_i\). 
If \(G_i/P_i\) is a projective factor of complex dimension \(2\) or more, then \(\tilde{M}_i\) is complex hyperbolic space. 
\end{corollary}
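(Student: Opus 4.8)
The plan is to combine the parallel-tensor result of Corollary~\ref{corollary:ParallelTensors}, the de~Rham splitting theorem, the Chern-class transfer of Lemma~\ref{lemma:ChernClassRelations}, and the curvature computation of Proposition~\ref{proposition:ConstantSectional}. First I would fix a K\"ahler--Einstein metric \(g\) on \(M\) (which exists and is unique up to scale because \(\Chern[1]{M}<0\)) and recall that the splitting projection tensors \(\varpi^i\) attached to the cominiscule geometry are holomorphic sections of \(\pr{T^* \otimes T}^{\otimes}\), hence parallel for \(g\) by Corollary~\ref{corollary:ParallelTensors}. Thus \(TM = T^1 \oplus \dots \oplus T^s\) is a \(g\)-orthogonal parallel holomorphic splitting, so the de~Rham splitting theorem produces the product \(\tilde M = \tilde M_1 \times \dots \times \tilde M_s\) with \(\pi^* T_i = T\tilde M_i\) and \(\pi^* g\) a product metric. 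Restricting the Einstein equation \(\operatorname{Ric}(g) = \lambda g\) (with \(\lambda < 0\) since \(\Chern[1]{M}<0\)) to the factors shows each \(\tilde M_i\) is a complete, simply connected K\"ahler--Einstein manifold whose Einstein constant is that same \(\lambda < 0\); in particular its scalar curvature is negative.

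Next I would isolate a projective factor \(G_i/P_i = \Proj{n_i}\) with \(n_i \ge 2\). On the model \(X = \prod_j \pr{G_j/P_j}\) the bundle \(V_i\) corresponding to \(T^i\), namely \(G \times_P \pr{\LieG_i/\LieP_i}\), is the pullback of \(T\Proj{n_i}\) under the projection \(X \to \Proj{n_i}\), so its Chern classes are pulled back from those of \(T\Proj{n_i}\). Using the Euler sequence on \(\Proj{n_i}\): with \(h\) the hyperplane class, \(\Chern[1]{V_i}=\pr{n_i+1}h\), \(\Chern[2]{V_i}=\binom{n_i+1}{2}h^2\), and \(h^{n_i+1}=0\), so \(2\pr{n_i+1}\,\Chern[2]{V_i}\,\Chern[1]{V_i}^{n_i-2} = n_i\pr{n_i+1}^{n_i}h^{n_i} = n_i\,\Chern[1]{V_i}^{n_i}\). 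Hence the polynomial \(\eta_i := 2\pr{n_i+1}\,\Chern[2]{V_i}\,\Chern[1]{V_i}^{n_i-2} - n_i\,\Chern[1]{V_i}^{n_i}\) vanishes on \(X\), so by Lemma~\ref{lemma:ChernClassRelations} it vanishes on \(M\), where now \(V_i = T^i\). Grouping the decomposition as \(T_1 := T^i\) and \(T_2 := \bigoplus_{j \ne i} T^j\) and feeding this into Proposition~\ref{proposition:ConstantSectional}: since the cohomology class \(\eta_i\) is zero and \(\Omega_2\) is a closed form, \(\int_M \eta_i \wedge \Omega_2^{n_2} = 0\), and the proposition forces \(\pi^* g\) to restrict to a metric of constant holomorphic sectional curvature on \(\tilde M_i\).

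To finish, \(\tilde M_i\) is a complete, simply connected K\"ahler manifold of constant \emph{negative} holomorphic sectional curvature, hence isometric up to scale to complex hyperbolic space \(\mathbb{B}^{n_i}\) with its Bergman metric, by the classical classification of complex space forms. I expect essentially all of the work to be bookkeeping rather than new mathematics: the only genuine computation is the Chern-class identity \(\eta_i = 0\) on \(\Proj{n_i}\), which is a one-liner from the Euler sequence, and the remaining care is in checking that \(V_i\) on the model is legitimately the pullback \(p_i^* T\Proj{n_i}\) (so that \(\eta_i=0\) is a Chern-class relation on the model in the sense of Lemma~\ref{lemma:ChernClassRelations}) and that the hypothesis \(n_i \ge 2\) of Proposition~\ref{proposition:ConstantSectional} — precisely the exclusion of \(\Proj{1}\) factors — holds; once those are in place the statement is a sequence of quotations of results already established.
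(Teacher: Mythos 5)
Your proposal is correct and follows essentially the same route as the paper: parallelism of the splitting tensors plus de~Rham splitting for the product structure, then Lemma~\ref{lemma:ChernClassRelations} to transfer the model's Chern-class relation \(\eta_i=0\) (which you verify explicitly via the Euler sequence) so that Proposition~\ref{proposition:ConstantSectional} forces constant negative holomorphic sectional curvature on the projective factors, hence complex hyperbolic space. The only difference is that you spell out details the paper leaves implicit (the identification of \(V_i\) with the pullback of \(T\Proj{n_i}\) on the model and the negativity of the Einstein constant), which is fine.
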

\begin{proof}
The holonomy group preserves the splitting by corollary~\vref{corollary:ParallelTensors}, so the deRham splitting theorem ensures that the universal covering space splits as indicated.
We could also derive the local splitting from theorem~\vref{theorem:drop.bracket}, because \(\Chern[1]{M}<0\) so \(M\) is a smooth projective variety.
The Chern class inequality in proposition~\vref{proposition:ConstantSectional}
follows from lemma~\vref{lemma:ChernClassRelations}.
Therefore \(\tilde{M}_i\) has a constant negative holomorphic sectional curvature metric, which we can assume has the same sectional curvature as complex hyperbolic space, 
so \(\tilde{M}_i\) must be isomorphic to complex hyperbolic space.
\end{proof}

\section{The Klingler--Mok vanishing theorem}

Suppose that \(G/P\) is a cominiscule variety. 
Decompose \(\LieG\) into simple Lie algebras 
\(
\LieG = 
\LieG^1 \oplus \LieG^2 \oplus \dots \oplus \LieG^N.
\)
For each simple factor \(\LieG^i \subset \LieG\)
let \(\gamma_i\) be the lowest weight whose weight space belongs to \(\LieG^i/\LieP^i=\LieG^i_-\).
This weight \(\gamma_i\) is of course a root, since \(\LieG_-\) is a sum of root spaces of \(\LieG\).
Note that \(\gamma_i\) is \emph{not} the negative noncompact simple root in general.
Let \(\gamma = \sum_i \gamma_i\), a weight (not a root unless \(\LieG\) is simple).
Call \(\gamma\) the \emph{Klingler weight} of \(G/P\).

\begin{theorem}[Klingler \cite{Klingler:2001} p. 212]\label{theorem:KlinglerMok}
Suppose that \(G/P\) is a cominiscule variety with Klingler weight \(\gamma\).
Suppose that \(\lambda\) is the lowest weight of a finite dimensional irreducible \(G_0\)-module \(V\).
Suppose that \(\lambda \ne 0\) and \(\left<\gamma,\lambda\right> \ge 0\).
Suppose that \(M\) is a compact complex manifold whose universal covering space is the noncompact dual Hermitian symmetric space to \(G/P\). 
Take the flat \(G/P\)-geometry \(E \to M\) (as defined in example~\vref{example:dual}.
Then \(0=\Cohom{0}{M,E \times_P V}\).
\end{theorem}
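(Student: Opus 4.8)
The plan is to follow Klingler and prove the vanishing by a Bochner argument on the compact K\"ahler manifold $M$, exploiting the fact that the curvature of an invariant Hermitian metric on a homogeneous holomorphic vector bundle reverses sign when one passes from a compact Hermitian symmetric space to its noncompact dual.

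First I would reduce to the case where $\LieG$ is simple. Splitting $X=\prod_i X_i$ into irreducibles gives $G_0=\prod_i G_0^i$, and the irreducible $G_0$-module $V$ is an external tensor product $V=\bigotimes_i V^i$ with lowest weight $\lambda=\sum_i\lambda_i$; since weights of distinct simple factors are Killing-orthogonal, $\left<\gamma,\lambda\right>=\sum_i\left<\gamma_i,\lambda_i\right>$. Passing to a suitable finite \'etale cover of $M$, on which the de~Rham decomposition of its universal cover descends, the group $\Cohom{0}{M,E\times_P V}$ injects into $\bigotimes_i\Cohom{0}{M_i,E_i\times_{P_i}V^i}$, each $M_i$ being a compact quotient of the noncompact dual of the irreducible $X_i$. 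Now $\left<\gamma_i,\lambda_i\right>=0$ whenever $\lambda_i=0$, so if $\lambda\ne 0$ and $\sum_i\left<\gamma_i,\lambda_i\right>\ge 0$ then some index $i$ has $\lambda_i\ne 0$ and $\left<\gamma_i,\lambda_i\right>\ge 0$; it therefore suffices to prove the theorem when $\LieG$ is simple, and then the Klingler weight $\gamma$ is $-\theta$ with $\theta$ the highest root, since $\LieG/\LieP=\LieG_-$ is the irreducible $\LieG_0$-module whose lowest weight is the lowest root.

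Next I would make the flat geometry of example~\vref{example:dual} explicit. Write $X=G/P$ and its noncompact dual $X'=G'/P'$, where $P'=P\cap G'$ is a maximal compact subgroup of $G'$ and $G_0$ is the complexification of $P'$. The developing map of the standard flat $\pr{G,X}$-geometry on $M=\Gamma\backslash X'$ is the $G'$-equivariant open embedding $X'\hookrightarrow X$, so over $X'$ the bundle of the geometry is $G'\times_{P'}P$ and hence $E\times_P V=G'\times_{P'}V$. Fixing a $P'$-invariant Hermitian inner product on $V$ makes $E\times_P V$ a $G'$-invariant Hermitian holomorphic bundle, and on a Hermitian symmetric space the canonical invariant connection of such a bundle is its Chern connection, so the Chern curvature $\Theta$ is $G'$-invariant, descends to $M$, and is determined by its value at one point. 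Under the Cartan decomposition $\LieG'=\mathfrak{p}'\oplus\mathfrak{m}'$ the holomorphic tangent space at the base point is $\LieG_-$, its conjugate is $\LieG_+$, and $\Theta\pr{u,\bar v}=-\rho\pr{[u,v]}$ for $u\in\LieG_-$ and $v\in\LieG_+$, with $\rho\colon\LieG_0\to\LieGL{V}$ the representation and the overall sign opposite to the one occurring on the compact dual $G/P$.

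Finally I would carry out the estimate. The hypothesis $\left<\gamma,\lambda\right>=-\left<\theta,\lambda\right>\ge 0$ is exactly the condition that makes the curvature operator appearing in the Bochner identity for holomorphic sections of $E\times_P V$ negative semidefinite on $V$ --- this is the content of Klingler's weight computation, and is where $\gamma$ enters --- so that, granting this, for $s\in\Cohom{0}{M,E\times_P V}$ a standard integration by parts on the compact K\"ahler manifold $M$ forces $s$ to be parallel for the Chern connection and the curvature term to vanish identically along $s$; but a parallel holomorphic section of $G'\times_{P'}V$ over the simply connected $X'$ corresponds to a $P'$-fixed vector in $V$, and $V^{P'}=V^{G_0}=0$ because $V$ is irreducible with $\lambda\ne 0$. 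Hence $\Cohom{0}{M,E\times_P V}=0$. I expect the main obstacle to be precisely this curvature step: identifying, with all sign conventions pinned down after the passage to the noncompact dual, the exact curvature operator whose negativity is equivalent to $\left<\gamma,\lambda\right>\ge 0$ (this is more delicate than the mean curvature $\sqrt{-1}\Lambda\Theta$, since $\left<\gamma,\lambda\right>$ is controlled by the extremal weights of $V$ in the minimal tangent direction $X_{-\theta}$ rather than by the central character alone), and --- in the boundary case $\left<\gamma,\lambda\right>=0$, where the semidefinite curvature acquires a kernel --- verifying that its degeneracy is mild enough that the vanishing still follows once $\lambda\ne 0$ is invoked.
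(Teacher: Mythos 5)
There is no proof in the paper to compare against step by step: the paper takes this statement directly from Klingler, adding only the remark that his proof, written for irreducible \(G/P\), works verbatim for a product of irreducible cominiscule factors. So a blind proof has to actually reconstruct Klingler's vanishing argument, and your sketch does not yet do that. The step you yourself flag as ``the main obstacle'' --- identifying the precise curvature operator on the noncompact dual and showing that \(\left<\gamma,\lambda\right>\ge 0\) together with \(\lambda\ne 0\) forces the vanishing, including the degenerate boundary case \(\left<\gamma,\lambda\right>=0\) --- is the entire content of the theorem. It is not a standard mean-curvature (Kodaira/Bochner) vanishing: the hypothesis pairs \(\lambda\) against the single extremal weight \(\gamma\) (the lowest root in the simple case), not against the central character or against all weights of \(\LieG_-\), and making the Bochner term semidefinite under exactly this hypothesis is Klingler's refined computation. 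Writing ``granting this'' at that point means the heart of the proof is assumed rather than proved.

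There is also a concrete error in your reduction to simple \(\LieG\). You pass to a finite \'etale cover of \(M\) on which the de~Rham splitting of the universal cover descends, so as to write \(M\) as a product \(\prod_i M_i\) and use \(\Cohom{0}{M,E\times_P V}=\bigotimes_i\Cohom{0}{M_i,E_i\times_{P_i}V^i}\). But the cocompact lattice \(\Gamma\subset G'=\prod_i G_i'\) in the statement is arbitrary, and irreducible lattices in such products exist (cocompact analogues of Hilbert modular groups, for instance); for these no finite cover of \(M\) splits as a product, and the splitting of \(X'\) does not descend. The correct way to handle reducible \(G/P\) --- and the point of the paper's ``works verbatim'' remark --- is that the curvature and weight computations are pointwise and local on \(X'\), so they factor over the simple summands of \(\LieG\) without any global splitting of \(M\); the integration by parts then takes place on the single compact manifold \(M\). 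Your combinatorial observation that some index \(i\) must have \(\lambda_i\ne 0\) and \(\left<\gamma_i,\lambda_i\right>\ge 0\) is fine, but as stated the hypothesis only gives \(\sum_i\left<\gamma_i,\lambda_i\right>\ge 0\), and once \(M\) cannot be split you can no longer discard the factors where \(\left<\gamma_i,\lambda_i\right><0\); the pointwise estimate has to be organized so that the global hypothesis \(\left<\gamma,\lambda\right>\ge 0\) (or an argument handling mixed signs across factors) suffices, which your outline does not address.
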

Klinger only states this result when \(G/P\) is an irreducible cominiscule variety; however, his proof works verbatim for any cominiscule \(G/P\).
(His sign conventions differ from ours, so his statement looks a little different.)


\section{Holonomy and cominiscule geometries}

\begin{theorem}\label{theorem:big}
Suppose that \(M\) is a connected compact complex manifold with \(\Chern{1}<0\) bearing a cominiscule geometry.
Then \(M\) admits a normal cominiscule geometry, and every normal cominiscule geometry is standard.
All (either normal or abnormal) cominiscule geometries on \(M\) have the same model and the  same holomorphic principal bundle \(E \to M\) with the same holomorphic map \(E \to FM\) to the underlying first order structure.
If the geometry has Cartan connection \(\omega\), then each cominiscule geometry has a Cartan connection of the form \(\omega'=\omega + s \omega_-\) for some unique \(s \in \Cohom{0}{M,\Sch}\).
Thus the moduli space of cominiscule geometries on \(M\) is a finite dimensional complex vector space, canonically identified with \(\Cohom{0}{M,\Sch}\).
\end{theorem}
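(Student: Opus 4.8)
The plan is to combine the structural results already in hand with the Klingler--Mok vanishing theorem, reducing everything to a product of locally Hermitian symmetric pieces and Riemann surfaces, and then to read off the moduli from theorem~\ref{theorem:Difference}.

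First I would fix one cominiscule geometry on \(M\), with model \(G/P = \prod_i G^i/P^i\). Since \(\Chern{1}<0\) the manifold \(M\) is projective and carries a K\"ahler--Einstein metric, unique up to scale, so corollary~\ref{corollary:ParallelTensors} makes the fundamental, splitting and barnacle tensors parallel; in particular the splitting \(TM = \bigoplus_i T^i\) is parallel, each \(T^i\) is bracket closed, and by corollary~\ref{corollary:Chen} the universal covering space splits as \(\tilde{M} = \prod_i \tilde{M}_i\) with \(\pi^* T^i = T\tilde{M}_i\), each \(\tilde{M}_i\) K\"ahler--Einstein, and each projective factor of dimension at least two contributing a complex hyperbolic \(\tilde{M}_i\). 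It then remains to identify the other factors. For a factor \(G^i/P^i\) that is not projective the prolongation of the induced \(G_0^i\)-structure on \(T\tilde{M}_i\) is of finite type; the holonomy of \(\tilde{M}_i\) complexifies into \(G_0^i\) by lemma~\ref{lemma:IdentityComponent}, hence is itself of finite type, so by lemma~\ref{lemma:FiniteType} and Berger's theorem \(\tilde{M}_i\) is a simply connected complete K\"ahler locally symmetric space of negative Ricci curvature, i.e. the noncompact dual of an irreducible compact Hermitian symmetric space, and comparison of \(G_0\)-structures forces that space to be \(G^i/P^i\); a one-dimensional projective factor gives the disk. Thus, after a finite \'etale cover, \(M\) is a locally Hermitian symmetric variety, presented as a product \(M' \times \prod_j M_j\) with \(M'\) a quotient of a product of noncompact duals and each \(M_j\) a compact Riemann surface, and our cominiscule geometry is the product of the flat geometry on \(M'\) with holomorphic projective connections on the \(M_j\): it is standard.

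Second, I would rerun this argument for an arbitrary cominiscule geometry on \(M\) and observe that the splitting it produces is, up to relabelling, the intrinsic de~Rham splitting of \(\tilde{M}\); since the irreducible noncompact duals of dimension at least two are pairwise nonisomorphic and the one-dimensional ones are all the disk, the factors \(\tilde{M}_i\), and with them the models \(G^i/P^i\), depend only on \(M\). Hence all cominiscule geometries on \(M\) share the same model and the same underlying first order structure, namely the \(G_0\)-structure cut out by the holonomy-determined parallel tensors. By theorem~\ref{theorem:Difference} and corollary~\ref{corollary:glue} the principal bundle \(E \to M\) and the map \(E \to FM\) are therefore canonical, any two cominiscule geometries differ by a global obstruction, and since one exists the obstruction class in \(\Cohom{1}{M,\Sch}\) vanishes; by lemma~\ref{lemma:S.obstruction} the set of all cominiscule geometries on \(M\) is then a complex affine space modelled on \(\Cohom{0}{M,\Sch}\), which is finite dimensional because \(\Sch\) is coherent and \(M\) is compact. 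Taking a standard normal geometry as origin identifies this affine space with \(\Cohom{0}{M,\Sch}\).

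Third, to obtain existence of a normal geometry and to make the moduli statement sharp I would split \(\Sch\) along the product \(M' \times \prod_j M_j\). On each Riemann surface factor lemma~\ref{lemma:obstruction.Schwarzian} provides a normal projective connection and \(\Cohom{0}{M_j,\Sch}\) is the space of quadratic differentials; on \(M'\) the \(P\)-module \(\Sch_0\) breaks into irreducible \(G_0\)-summands whose lowest weights \(\lambda\) are nonzero and satisfy \(\left<\gamma,\lambda\right> \ge 0\), so theorem~\ref{theorem:KlinglerMok} gives \(\Cohom{0}{M',\Sch} = 0\); hence on \(M'\) the flat geometry is the unique cominiscule geometry with the given first order structure, and it is normal. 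Assembling the factors, \(M\) admits a normal cominiscule geometry, every normal cominiscule geometry differs from a standard one only along the \(\Proj{1}\) factors and is therefore standard, and the moduli space of all cominiscule geometries on \(M\) is canonically \(\Cohom{0}{M,\Sch}\). I expect the main obstacle to be the factor-by-factor identification in the first step: showing that a complete K\"ahler--Einstein metric carrying a parallel finite-type \(G_0\)-structure is forced to be the noncompact dual of the expected Hermitian symmetric space, and that the model of the cominiscule geometry on that factor is the compact dual. This is where the negativity of \(\Chern{1}\), the finite type of cominiscule \(G_0\)-structures, and the Berger/de~Rham classification genuinely do the work; everything after it is formal bookkeeping with the results already established.
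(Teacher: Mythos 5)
Your overall skeleton (parallel tensors, de~Rham splitting, corollary~\ref{corollary:Chen} for the \(\Proj{n}\)-factors with \(n\ge 2\), holonomy identification of the non-projective factors, then theorem~\ref{theorem:Difference}, corollary~\ref{corollary:glue} and lemma~\ref{lemma:S.obstruction} for the bundle and moduli statements) matches the paper's proof. But there is a genuine gap at the standardness step. You pass from ``the universal cover splits as \(\prod_i \tilde{M}_i\)'' to ``after a finite \'etale cover, \(M\) is a product \(M'\times\prod_j M_j\) of a locally symmetric piece and compact Riemann surfaces.'' That is false in general: the lattice \(\Gamma=\pi_1(M)\) may be irreducible in the product of the automorphism groups of the factors (Hilbert modular type examples), and then no finite cover of \(M\) splits. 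This is exactly the case the paper spends the last part of its proof on: when \(\Gamma\) is irreducible and there is a \(\Proj{1}\)-factor, one must show that any \(\Gamma\)-invariant quadratic differential on the corresponding disk factor vanishes, which the paper does via the Ehresmann--Thurston deformation theorem together with Weil's rigidity theorem (\(\Cohom{1}{\Gamma,\LieG}=0\) for irreducible cocompact \(\Gamma\) in a group of rank \(\ge 2\)); only then is the normal geometry flat and standard. Your proposal never confronts this, so the claim ``every normal cominiscule geometry is standard'' is not established by your argument.

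A second, smaller problem: in your third step you assert that every irreducible \(G_0\)-summand of \(\Sch_0\) has nonzero lowest weight \(\lambda\) with \(\left<\gamma,\lambda\right>\ge 0\), so that theorem~\ref{theorem:KlinglerMok} gives \(\Cohom{0}{M',\Sch}=0\). This weight computation is not carried out, and the conclusion is stronger than anything the paper claims: the paper explicitly leaves the dimension of \(\Cohom{0}{M,\Sch}\) (hence the possible existence of abnormal geometries) open, and invokes Klingler--Mok only for the specific module \(T^i\otimes_0\Sym{2}{(T^i)^*}\) governing normal projective connections on the complex hyperbolic factors, while uniqueness of the normal geometry on factors without projective model pieces comes from lemma~\ref{lemma:Cap.Schichl.Goncharov}/\ref{lemma:normal.space}, not from a vanishing theorem. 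If you replace your blanket vanishing claim by these two weaker statements, that part of your argument goes through; the missing irreducible-lattice/Weil-rigidity step, however, is an essential omission.
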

\begin{proof}
Suppose that the geometry has model \(X=G/P\).
Split the model into irreducibles as \(X=\prod_i X_i\), \(G=\prod_i G_i\), \(P=\prod_i P_i\).
The universal covering space \(\tilde{M}\) of \(M\) has split tangent bundle.
The splitting is invariant parallel transport by corollary~\vref{corollary:ParallelTensors}.
By the deRham splitting theorem~\cite{Joyce:2007} p. 47 theorem 3.2.7, the universal covering \(\tilde{M}\) of \(M\) then splits as a corresponding product
\(
\tilde{M} = \prod_i \tilde{M}_i.
\)
We can assume that the geometry is normal, by proposition~\vref{proposition:Schwarzian.is.all}.
Each factor \(\tilde{M}_i\) inherits a normal cominiscule geometry with irreducible model, invariant under parallel transport.

If the factor \(X_i\) is not projective, the holonomy group is the holonomy group of the \(G_i'\)-invariant metric on the noncompact Hermitian symmetric space, \(X'_i\) and \(\tilde{M}_i=X'_i\).
Moreover, the underlying first order structure is invariant under parallel transport. 
The complexified holonomy group is precisely the stabilizer of that first order structure, and also precisely the stabilizer of the standard first order structure.
The proof of lemma~\vref{lemma:IdentityComponent} easily adapts to prove that the data of the first order structure (the fundamental tensor, barnacle tensor and splitting) are determined completely by the action of the group \(G_0\), so that any two first order structures with the same stabilizer are identical, ensuring the uniqueness of the first order structure on \(\tilde{M}_i\).
Klingler \cite{Klingler:2001} p. 211 propositions 4.9 and 4.10 gives an alternative proof of the rigidity of the underlying first order structure.

If the factor \(\pr{G_i,X_i}\) is a projective factor, say \(X_i=\Proj{n}\) for \(n \ge 2\), then corollary~\vref{corollary:Chen} and lemma~\vref{lemma:ChernClassRelations} together prove that \(\tilde{M}_i\) is complex hyperbolic space and the metric on \(\tilde{M}\) is a product metric.
We need to establish that the projective connection on \(\tilde{M}_i\) is the standard flat one. 
As shown in lemma~\vref{lemma:obstruction.Schwarzian}, given any two normal holomorphic projective connections on \(M\), one can be written in terms of the other and a holomorphic section of \(T^i \otimes_0 \Sym{2}{T^i}^*\), the bundle of traceless symmetric vector-valued 2-tensors, after a suitable holomorphic bundle isomorphism.
(This is also noted with less detail by Klingler \cite{Klingler:2001} p. 17).
It follows from theorem~\vref{theorem:KlinglerMok} that there are no global nonzero sections of this bundle; see Klingler \cite{Klingler:2001} p. 17 for more details. 
Therefore the normal holomorphic projective connection on \(\tilde{M}_i\) is the usual flat one on complex hyperbolic space, and this is the unique holomorphic normal projective connection on \(T^i\).

If \(X_i=\Proj{1}\) then \(\tilde{M}_i\) inherits a holomorphic projective connection given by a holomorphic quadratic differential, say \(\eta_i\).

The product of all of these geometries on these various factors is a normal holomorphic cominiscule geometry with the same underlying first order structure as the pullback geometry, and inducing the same projective connections on the various vector subbundles.
If there are two holomorphic normal \(\pr{G,X}\)-geometries on \(M\) with the given underlying first order structure and inducing the same holomorphic projective connections on each \(T^i\) with rank \(1\), then they must each induce the same normal geometries on each of the \(T^i\), and so the Cartan connection must restrict to the same Cartan connection above each of these \(T^i\), on the same holomorphic bundle (corollary~\vref{corollary:glue}), forcing them to agree.
Therefore the product geometry is the unique normal holomorphic cominiscule geometry on \(M\) with the given model, and the geometry on \(M\) is flat.
We still need to prove that it is standard, i.e. that we can replace \(M\) by a finite covering space to split \(M\) into a product of compact Riemann surfaces with holomorphic quadratic differentials and some locally Hermitian symmetric variety with its standard cominiscule geometry.
If \(\dim M = 1\), then the theorem is clear, so we can assume that \(G\) has rank at least \(2\).

We call \(\Gamma\) \emph{reducible} if \(\Gamma\) has a finite index subgroup \(\Gamma' \times \Gamma''\) with each of \(\Gamma'\) and \(\Gamma''\) lying in different factors in the product \(G=\prod_i G_i\). 
Consequently, some finite cover of \(M\) splits into a corresponding product \(M=M' \times M''\), and the product geometries on the universal covering space of \(M\) then descend to products of geometries on each factor and our proof is finished by induction.
So we can assume that \(\Gamma\) is irreducible without loss of generality.
The problem arises that there might exist some projective line factor \(X_i=\Proj{1}\) and some nonzero quadratic differential \(\eta_i\) on the associated factor \(\tilde{M}_i\).

We follow Klingler \cite{Klingler:2001} p. 205.
Suppose that there is such a quadratic differential, invariant under \(\Gamma\).
The set of \(\Gamma\)-invariant quadratic differentials is a complex vector space.
We have only to prove that it has dimension zero.
The geometry on \(M\) is flat, so there is a developing map \(\delta \colon \tilde{M} \to X\) to the model and a morphism \(h \colon \pi_1(M) \to G\) so that \(\delta \circ \gamma = h(\gamma)\delta\) for all \(\gamma \in \pi_1(M)\).
By the Ehresmann--Thurston deformation theorem \cite{Goldman:2010}, the complex analytic stack of flat \(\pr{G,X}\)-geometries on the real manifold \(M\) is locally isomorphic to the complex analytic stack of representations of the fundamental group, by taking each geometry to its holonomy morphism \(h\).
The local rigidity of the flat holomorphic projective connection therefore follows if we can prove the local rigidity of the representation \(h\).
A theorem of Weil tells us that the vanishing of \(\Cohom{1}{\Gamma,\LieG}\) implies local rigidity of this representation \cite{Raghunathan:1972} p. 91 theorem 6.7.
Weil's rigidity theorem \cite{Raghunathan:1972} p. 137 theorem 7.66 implies the vanishing of \(\Cohom{1}{\Gamma,\LieG}\), and therefore the normal geometry on \(M\) is locally rigid.
The \(\Gamma\)-invariant quadratic differentials on the various 1-dimensional factors are the deformations of the geometry on \(M\) among normal geometries.
Since there are no deformations, this vector space has dimension zero: the geometry is the unique normal holomorphic projective connection on \(M\).

There are no cominiscule geometries on \(M\) with different models, because the model is the compact Hermitian symmetric space which is the compact form of the universal covering space of \(M\), with the identity component of its biholomorphism group acting on it. 
The identification of the moduli space follows by lemma~\vref{lemma:normal.space}.
\end{proof}

\section{Conclusion}
Our results above generalize those of \cite{Beauville:2000,Klingler:2001,Kobayashi/Ochiai:1980,Kobayashi/Ochiai:1981,Kobayashi/Ochiai:1981b,Kobayashi/Ochiai:1982}.
The dimension of the moduli space of holomorphic cominiscule (not necessarily normal) geometries on a general K\"ahler--Einstein manifold is still unknown, although we have identified it with the space of holomorphic sections of \(\Sch\).
It seems unlikely that a compact complex manifold with \(\Chern{1}<0\) can admit a \emph{noncominiscule} holomorphic parabolic geometry.
Smooth projective curves, surfaces and 3-folds admitting holomorphic projective connections on are classified by Klingler \cite{Klingler:1998} and Jahnke and Radloff \cite{Jahnke/Radloff:2011}; besides the model, the translation invariant examples on tori, and the locally complex hyperbolic spaces (as above), there are also certain torus bundles over curves.
It should be feasible to classify holomorphic projective connections on torus bundles over K\"ahler--Einstein manifolds in any dimension.

\bibliographystyle{amsplain}
\bibliography{KE-CHSS}
\end{document}